\documentclass{article}

\usepackage[english]{babel}
\usepackage[utf8]{inputenc}
\usepackage{bbm}
\usepackage{amsmath}
\usepackage{amsfonts}
\usepackage{graphicx}
\usepackage{multirow}
\usepackage[colorinlistoftodos]{todonotes}
\usepackage{blindtext}
\usepackage{amssymb}
\usepackage{algorithm}
\usepackage{amsthm}
\numberwithin{equation}{section}
\makeatletter
\newcommand*{\rom}[1]{\expandafter\@slowromancap\romannumeral #1@}

\newtheorem{theorem}{Theorem}[section]

\newtheorem{lemma}[theorem]{Lemma}
\newtheorem{proposition}{Proposition}[section]

\theoremstyle{remark}
\newtheorem*{remark}{Remark}

\theoremstyle{definition}

\title{Mallows permutation models with $L^1$ and $L^2$ distances \rom{1}: hit and run algorithms and mixing times}

\author{Chenyang Zhong\thanks{Department of Statistics, Stanford University}}

\date{\today}

\begin{document}
\maketitle
\begin{abstract}
Mallows permutation model, introduced by Mallows in statistical ranking theory, is a class of non-uniform probability measures on the symmetric group $S_n$. The model depends on a distance metric $d(\sigma,\tau)$ on $S_n$, which can be chosen from a host of metrics on permutations. In this paper, we focus on Mallows permutation models with $L^1$ and $L^2$ distances, respectively known in the statistics literature as Spearman's footrule and Spearman's rank correlation.

Unlike most of the random permutation models that have been analyzed in the literature, Mallows permutation models with $L^1$ and $L^2$ distances do not have an explicit expression for their normalizing constants. This poses challenges to the task of \emph{sampling} from these Mallows models. In this paper, we consider \emph{hit and run algorithms} for sampling from both models. Hit and run algorithms are a unifying class of Markov chain Monte Carlo (MCMC) algorithms including the celebrated Swendsen-Wang and data augmentation algorithms. For both models, we show order $\log{n}$ mixing time upper bounds for the hit and run algorithms. This demonstrates much faster mixing of the hit and run algorithms compared to local MCMC algorithms such as the Metropolis algorithm. The proof of the results on mixing times is based on the path coupling technique, for which a novel coupling for permutations with one-sided restrictions is involved. 

Extensions of the hit and run algorithms to weighted versions of the above models, a two-parameter permutation model that involves the $L^1$ distance and Cayley distance, and lattice permutation models in dimensions greater than or equal to $2$ are also discussed. The order $\log{n}$ mixing time upper bound pertains to the two-parameter permutation model. 
\end{abstract}

\section{Introduction}\label{Sect.1}

Random permutations are ubiquitous in various fields including combinatorics, probability, and statistics. Mallows permutation model, introduced by Mallows \cite{Mal} in statistical ranking theory, is a class of non-uniform probability measures on the symmetric group $S_n$. The model depends on a distance metric $d(\sigma,\tau)$ on $S_n$, a location parameter $\sigma_0\in S_n$, and a scale parameter $\beta$. The distance metric $d(\sigma,\tau)$ can be chosen from a host of metrics on permutations, which we will discuss in detail in Section \ref{Sect.1.1}. Under the model, the probability of generating a permutation $\sigma\in S_n$ is given by
\begin{equation*}
    \mathbb{P}_{d,\sigma_0,\beta}(\sigma)\propto \exp(-\beta d(\sigma,\sigma_0)).
\end{equation*}
For $\beta>0$ and a reasonable choice of the distance metric $d(\sigma,\tau)$, the model is biased towards the permutation $\sigma_0$. The reader is referred to \cite{BDJ2,Cri,DR,Mar} and Section \ref{Sect.1.1} below for further discussions on Mallows permutation model.

In this paper, we focus on Mallows permutation models with $L^1$ or $L^2$ distance. The $L^1$ and $L^2$ distances are defined respectively as
\begin{equation}\label{Dis}
    H(\sigma,\tau):=\sum_{i=1}^n|\sigma(i)-\tau(i)|, \quad  \tilde{H}(\sigma,\tau):=\sum_{i=1}^n (\sigma(i)-\tau(i))^2,
\end{equation}
for any $\sigma,\tau\in S_n$. These distances are also known as Spearman's footrule and Spearman's rank correlation in the statistics literature. In the rest of the paper, we also refer to these two Mallows models as the $L^1\slash L^2$ model. We restrict our attention to the case when the location parameter $\sigma_0=Id$, the identity permutation, and the scale parameter $\beta$ is positive. We denote by $\mathbb{P}_{\beta}$ and $\tilde{\mathbb{P}}_{\beta}$ the probability measures that correspond to the $L^1$ and $L^2$ models under the above scenarios: for any $\sigma\in S_n$,
\begin{equation*}
    \mathbb{P}_{\beta}(\sigma)=Z_{\beta}^{-1}\exp(-\beta H(\sigma,Id)),\quad \tilde{\mathbb{P}}_{\beta}(\sigma)=\tilde{Z}_{\beta}^{-1}\exp(-\beta \tilde{H}(\sigma,Id)),
\end{equation*}
where $Z_{\beta},\tilde{Z}_{\beta}$ are the normalizing constants. We also denote by $\mathbb{E}_{\beta}$ and $\tilde{\mathbb{E}}_{\beta}$ the corresponding expectation operators. 

A typical question from probabilistic combinatorics is the following \cite{BDJ2}: Picking a random permutation from the $L^1$ or $L^2$ model, what does it ``look like''? This may involve various features of the permutation, such as the length of the longest increasing subsequence or the number of fixed points. To explore these features visually, one can try to generate a large number of samples from either of the models and make histograms of the corresponding features. In this paper, we consider \emph{hit and run algorithms} for sampling from both the $L^1$ and $L^2$ models. Hit and run algorithms are a unifying class of Markov chain Monte Carlo (MCMC) algorithms including the celebrated Swendsen-Wang algorithm for sampling from the Ising model and the data augmentation algorithm that is widely used in statistics \cite{AD}. A brief overview of this class of algorithms will be given in Section \ref{Sect.1.2} below.

We present here some visualization regarding both the $L^1$ and $L^2$ models. Under the uniform distribution on $S_n$, the number of fixed points approximately follows the Poisson distribution with parameter $1$ (see Subfigure (a1) of Figure \ref{figa} for a comparison between the number of fixed points and Poisson distribution with parameter $1$ when $n=1000$). How does this change under the $L^1$ and $L^2$ models? Figure \ref{figb} shows histograms of the number of fixed points when $n=1000$ for various choices of the scale parameter $\beta$ for both the $L^1$ and $L^2$ models. We see that for $\beta=c\slash n$ (or $\beta=c\slash n^2$, respectively) with $c\in\{0.5,1\}$, under the $L^1$ model (or the $L^2$ model, respectively), the distribution of the number of fixed points is approximately Poisson with parameter depending on $\beta$ and greater than $1$. Some scatter plots (plots of the points $\{(i,\sigma(i))\}_{i=1}^n$ in the plane for $\sigma$ drawn from a particular model) of the $L^1$ and $L^2$ models are also shown in Figure \ref{figc}. 

\begin{figure}[H]
  \centering
  \includegraphics[
  height=\textwidth,
  width=\textwidth
  ]{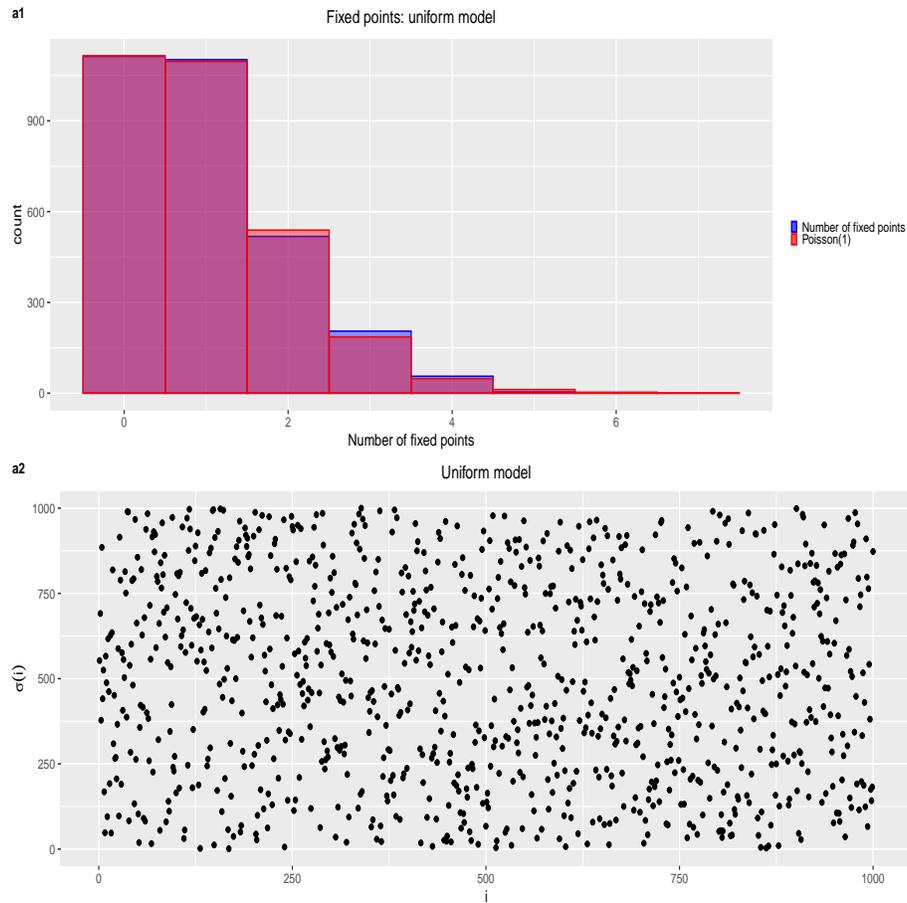}
 \caption{Histogram of the number of fixed points (a1) and scatter plot (a2) of uniform random permutation with $n=1000$. For (a1), the histogram in blue is based on the number of fixed points of $3000$ independent uniform random permutations; for comparison, we also present the histogram of $3000$ independent Poisson random variables with parameter $1$ in red.}
 \label{figa}
\end{figure} 

\begin{figure}[H]
  \centering
  \includegraphics[
  height=\textwidth,
  width=\textwidth
  ]{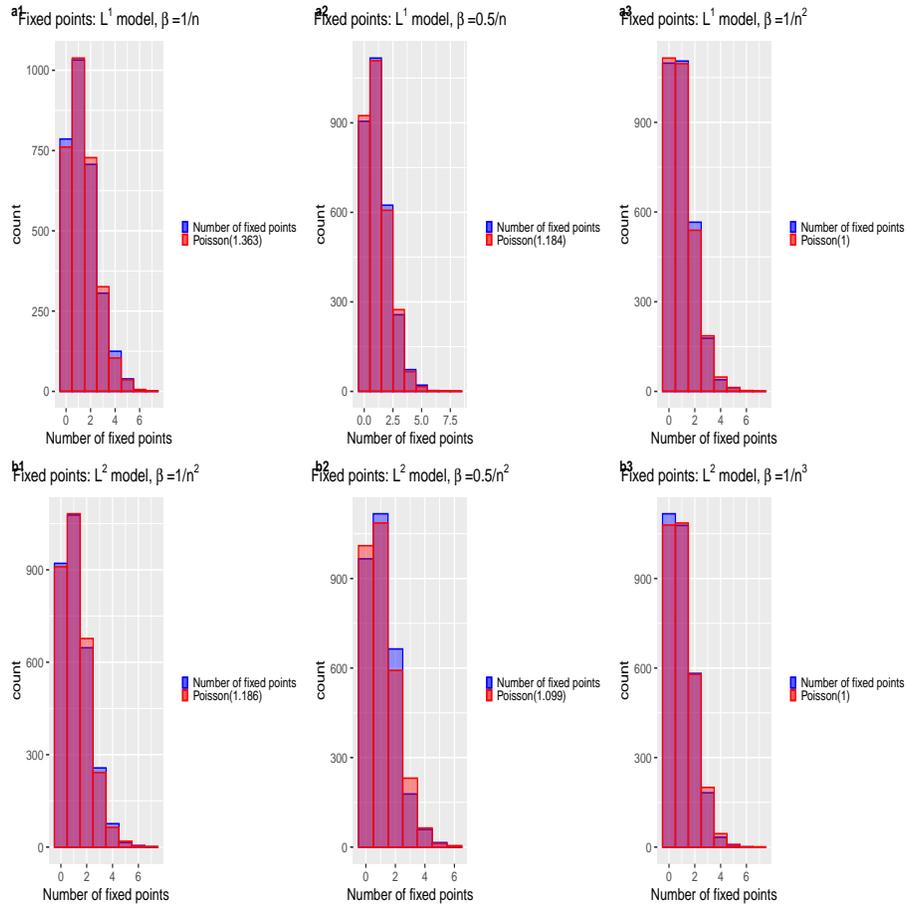}
 \caption{Histograms of the number of fixed points of the $L^1$ and $L^2$ models with $n=1000$ and various choices of the parameter. The histograms are based on the first $4000$ samples (dropping the first $1000$ samples as burn-in) of the corresponding hit and run algorithm (introduced in Section \ref{Sect.2}) started at the identity permutation. We also present the histograms of $3000$ independent Poisson random variables (with varying parameters) in red for comparison.}
 \label{figb}
\end{figure} 

\begin{figure}[H]
  \centering
  \includegraphics[
  height=\textwidth,
  width=\textwidth
  ]{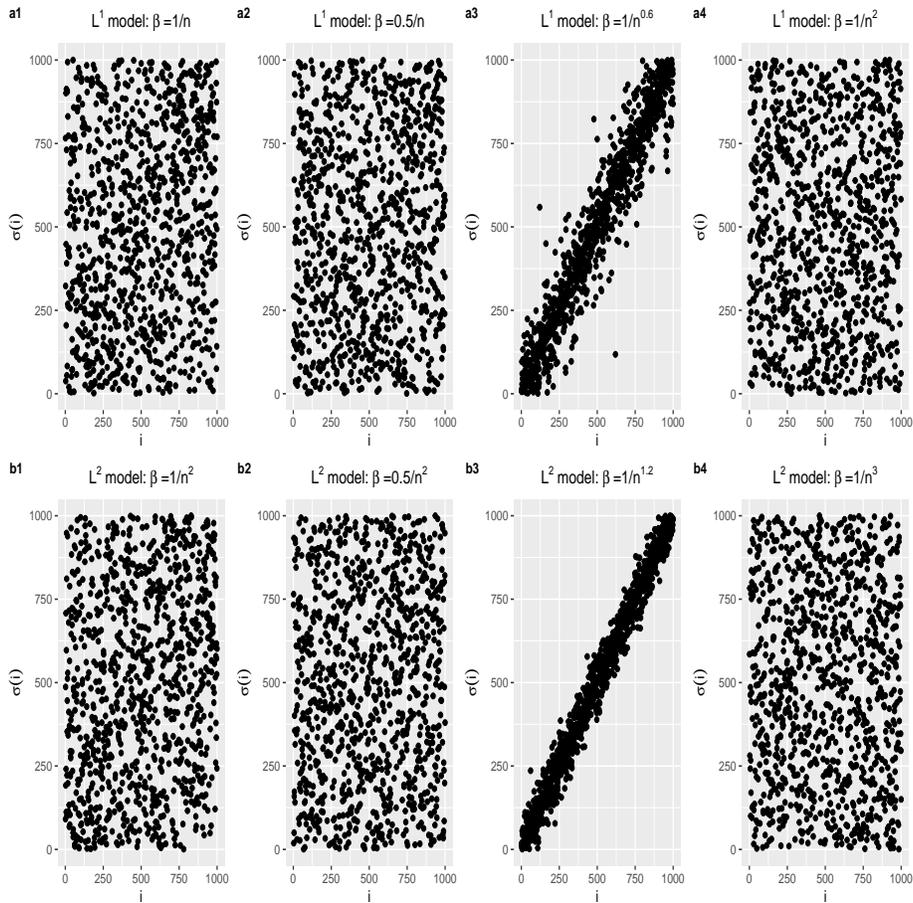}
 \caption{Scatter plots of the $L^1$ and $L^2$ models with $n=1000$ and various choices of the parameter. The scatter plots are based on the sample collected at the $4000$th step of the corresponding hit and run algorithm (introduced in Section \ref{Sect.2}) started at the identity permutation.}
 \label{figc}
\end{figure}

The scale parameter $\beta>0$ in Mallows permutation model has quite delicate behavior. If $\beta$ is too close to zero, the model is indistinguishable from the uniform distribution on $S_n$. If $\beta$ is large, the model exhibits a band structure; that is, if $\sigma\in S_n$ is a random permutation drawn from the model, the points $\{(i,\sigma(i))\}_{i=1}^n$ are concentrated in a band along the diagonal $\{(i,i):i\in [n]\}$ with width of order smaller than $n$. For the $L^1$ model, the critical magnitude of $\beta$ turns out to be $\beta\sim c\slash n$ with $c>0$ fixed. For the $L^2$ model, the critical magnitude turns out to be $\beta \sim c\slash n^2$ with $c>0$ fixed. For either of the models, when $\beta$ is chosen to be of critical magnitude and $\sigma\in S_n$ is drawn from the model, the random probability measure $\nu_{\sigma}=n^{-1}\sum_{i=1}^n \delta_{(i\slash n,\sigma(i)\slash n)}$ on $[0,1]^2$ converges weakly in probability. Mathematical backup for these claims is in \cite{FM,M1,M2,Zho2}. It also shows strikingly in simulations (see Figures \ref{figa}-\ref{figc} above). 

For both the $L^1$ and $L^2$ models, we show order $\log{n}$ mixing time upper bounds for the hit and run algorithms for certain ranges of the parameter $\beta$ that are of practical interest (they correspond to the critical magnitude as explained in the previous paragraph). The concrete results are in Section \ref{Sect.3}. The mixing time results demonstrate that the hit and run algorithms mix much faster than local MCMC algorithms such as the Metropolis algorithm. The proof of these results is based on the path coupling technique \cite{BD,LP}, a useful technique for bounding rates of convergence of finite Markov chains. To construct the path coupling, a novel coupling for permutations with one-sided restrictions is involved (see Section \ref{Sect.3} below for details). 

Following a suggestion by Paul Switzer, we further extend the hit and run algorithms to the analogs of the $L^1$ and $L^2$ models with weighted distances in Section \ref{Sect.2.3}. We also introduce generalizations of the hit and run algorithms for sampling from a wider class of permutation models, such as a two-parameter permutation model that involves the $L^1$ distance and Cayley distance (see Section \ref{Sect.1.1} for the definition of Cayley distance) in Section \ref{Sect.4}, and higher dimensional analogs of the $L^1$ and $L^2$ models that are of interest in the mathematical physics community in Section \ref{Sect.5}. 

The algorithms and analysis presented here can be used beyond simulation. They give a representation of these Mallows models that is useful in proving limit theorems about the distribution of ``features'' such as the number of fixed points, cycles, band structure, and the length of the longest increasing subsequence. See \cite{Zho2,Zho3,Zho4}.

In the rest of this Introduction, we review background material on Mallows permutation model and hit and run algorithms in Sections \ref{Sect.1.1}-\ref{Sect.1.2}, respectively.


\subsection{Mallows permutation model}\label{Sect.1.1}

As introduced above, Mallows permutation model involves the specification of a distance metric $d(\sigma,\tau)$ on permutations, which is usually used to measure the closeness between two permutations. This distance metric can be chosen from a host of metrics on permutations. We list several examples of metrics on permutations that are widely used in statistics as follows:
\begin{itemize}
    \item $L^1$ distance, or Spearman's footrule: $d(\sigma,\tau)=\sum_{i=1}^n|\sigma(i)-\tau(i)|$;
    \item $L^2$ distance, or Spearman's rank correlation: $d(\sigma,\tau)=\sum_{i=1}^n (\sigma(i)-\tau(i))^2$;
    \item Kendall's $\tau$: $d(\sigma,\tau)=$ minimum number of pairwise adjacent transpositions taking $\sigma^{-1}$ to $\tau^{-1}$;
    \item Cayley distance: $d(\sigma,\tau)=$ minimum number of transpositions taking $\sigma$ to $\tau$; 
    \item Hamming distance: $d(\sigma,\tau)=\#\{i\in\{1,\cdots,n\}:\sigma(i)\neq \tau(i)\}$;
    \item Ulam's distance: $d(\sigma,\tau)=n-$ the length of the longest increasing subsequence in $\tau\sigma^{-1}$.
\end{itemize}
Cayley distance has the following alternative expression: for any $\sigma,\tau\in S_n$,
\begin{equation*}
    d(\sigma,\tau)=n-C(\sigma\tau^{-1}),
\end{equation*}
where $C(\sigma)$ is the number of cycles of $\sigma$. The reader is referred to \cite[Chapter 6]{D} for further discussions on and statistical applications of metrics on permutations.

Mallows permutation model has been widely applied to various fields including statistics, psychology, and social science. For example, Feigin and Cohen \cite{FC} used Mallows permutation model to analyze the concordance between several judges who are asked to rank a list of objects. Critchlow \cite{Cri} extended Mallows permutation model to partially ranked data and showed that the extended model fits the ranking data well in several real examples. Further generalizations and applications of Mallows permutation model can be found in, for example, \cite{ABSV,CBBK,CFV,FV,FV2,LL,LM2,MPPB}.

In this paper, we consider Mallows permutation models with $L^1$ and $L^2$ distances. These models carry a spatial structure, and are also known as ``spatial random permutations'' in the mathematical physics literature, in connection to Feynman's approach to the study of quantum Bose gas \cite{Fey}. There has been quite some mathematical and statistical interest in such models in the literature. For example, in \cite{M1}, Mukherjee showed that when $\beta$ scales with $n$ as $\beta\sim c\slash n$ (with fixed $c$), for $\sigma$ drawn from the $L^1$ model, the random probability measure $n^{-1}\sum_{i=1}^n\delta_{(i\slash n,\sigma(i)\slash n)}$ on the unit square converges weakly in probability, and developed statistical estimation methods for the parameter $\beta$ based on the convergence results. Analogs of the results hold for the $L^2$ model when $\beta$ scales with $n$ as $\beta\sim c\slash n^2$ (again with $c$ fixed). Then in \cite{M2} he further established Poisson limit theorems for the number of cycles with fixed lengths for both the $L^1$ and $L^2$ models based on the results in \cite{M1} (with the same regimes of $\beta$). Biskup and Richthammer \cite{BR} studied Gibbs measures on permutations of a locally finite infinite set, which include infinite versions of the $L^1$ and $L^2$ models considered here for the regime when $\beta$ is fixed. Fyodorov and Muirhead \cite{FM} studied the band structure of the $L^1$ model, and established the asymptotic behavior of mean displacement per site for $\sigma$ drawn from the $L^1$ model. Here, mean displacement per site of a permutation $\sigma$ is defined as $n^{-1}\sum_{i=1}^n \mathbb{E}_{\beta}[|\sigma(i)-i|]$.

The normalizing constants for Mallows permutation models with $L^1$ and $L^2$ distances are expressed as certain permanents that are hard to compute in general. There is no known exact algorithms for sampling from these models, either. This is in contrast to many other random permutation models that have been analyzed in the literature, including Mallows permutation models with Kendall's $\tau$ or Cayley distance. Mallows permutation model with the latter distance is also known as ``Ewens sampling formula'' in the literature (see, for example, \cite{Crane}). The normalizing constants for Mallows permutation models with Kendall's $\tau$ and Cayley distance can be explicitly expressed in our notations as
\begin{equation*}
    \prod_{i=1}^n\frac{1-e^{-\beta i}}{1-e^{-\beta}} \text{ and } \frac{(e^{\beta}+1)\cdots (e^{\beta}+n-1)}{e^{\beta(n-1)}},
\end{equation*}
respectively (see e.g. \cite[Corollary 1.3.13]{Sta} and \cite{Crane} for details). For Mallows permutation model with Kendall's $\tau$, an exact sampling algorithm was originally introduced by Mallows \cite{Mal} (see also \cite[Section 2]{BP}). Mallows permutation model with Cayley distance can be exactly sampled in various ways \cite[Section 4]{Crane}, one of which is through the Chinese restaurant process (see e.g. \cite{Aldous}). These exactly solvable structures greatly facilitate the analysis of Mallows permutation models with Kendall's $\tau$ and Cayley distance; see e.g. \cite{AC,BB,BP,BDJ2,CDE,GP,GO,He,HHL,MS,M2,Pins,PT,Starr} and \cite{ABT,Crane,DS,Feray,Ka3,Ka1,Ka2} for various probabilistic properties of the two models.

In view of the lack of explicit expressions for the normalizing constants and exact sampling algorithms for the $L^1$ and $L^2$ models, we consider Markov chain-based approximate sampling algorithms for both models in this paper. That is, we design a suitable Markov chain for either of the models, and after running the chain for ``long enough time'', the distribution of the current state is close to the probability measure that corresponds to the particular model. The specific Markov chains that we use here are called \emph{hit and run algorithms}, which we will review in Section \ref{Sect.1.2} below. 

There have been several Markov chain-based algorithms for sampling from Mallows permutation models with other distances in the literature. For example, Diaconis and Hanlon \cite{DH} characterized the mixing time of a Metropolis algorithm for sampling from Mallows permutation model with Cayley distance. Later, Diaconis and Ram \cite{DR} obtained precise bounds on the mixing times of Metropolis algorithms for sampling from Mallows permutation model with Kendall's $\tau$. However, for Mallows permutation models with $L^1$ and $L^2$ distances, useful mixing time upper bounds for the corresponding Metropolis algorithms have remained open so far. As shown in Section \ref{Sect.3}, for these Mallows models, the hit and run algorithms as considered in this paper mix much faster than the corresponding Metropolis algorithms.

\subsection{Hit and run algorithms}\label{Sect.1.2}

Hit and run algorithms are a broad class of MCMC algorithms that includes variously the Swendsen-Wang algorithm, the data augmentation algorithm, and the auxiliary variables algorithm. This class of algorithms have been widely used in statistics and scientific simulations. A comprehensive overview on hit and run algorithms can be found in \cite{AD}. 

The original version of hit and run algorithms was introduced for sampling from probability densities on an Euclidean space (\cite{Turcin}; see also \cite{AD}). Let $\pi(x)$ be a probability density on $\mathbb{R}^d$. The hit and run algorithm is a Markov chain on $\mathbb{R}^d$ whose every step can be specified as follows. Starting from $x$, pick a point $y$ uniformly at random from the unit sphere centered at $x$; given $y$, pick a point $z$ on the line determined by the points $x$ and $y$ from the conditional density of $\pi(x)$ restricted to the line, and move from $x$ to $z$. The stationary distribution of this chain can be shown to be equal to $\pi$. 

Several generalized versions of hit and run algorithms have been introduced in \cite{AD}. We introduce the version that is most relevant to the examples considered here following \cite{AD}. Let $\mathcal{X}$ be a finite set, and $\pi(x)>0$ a probability measure on $\mathcal{X}$. The (generalized) hit and run algorithm is a Markov chain on $\mathcal{X}$ for sampling from $\pi$. The algorithm involves introducing a set $I$ of auxiliary variables. For each $i\in I$ and $x\in\mathcal{X}$, we also need to specify a proposal kernel $w_x(i)\geq 0$, such that the following two conditions hold:
\begin{itemize}
    \item $\sum_{i\in I}w_x(i)=1$;
    \item For each $i\in I$, there is at least one $x$ such that $w_x(i)>0$.
\end{itemize}
Let $f(x,i)=w_x(i)\pi(x)$, which is a probability measure on $\mathcal{X}\times I$. For each $i\in I$, we further specify a Markov chain $K_i(x,y)$ with stationary distribution given by the conditional distribution $f(x|i)$. 

Each step of the hit and run algorithm proceeds as follows. From $x\in\mathcal{X}$, pick $i\in I$ from the proposal kernel $w_x(i)$; given $i\in I$, run one step of the chain $K_i(x,y)$ to obtain $y$ and move from $x$ to $y$. The transition matrix of the whole algorithm is thus given by
\begin{equation*}
    K(x,y)=\sum_{i\in I} w_x(i)K_i(x,y).
\end{equation*}
It can be shown that the stationary distribution of this chain is given by $\pi$.

Many examples of hit and run algorithms can make big jumps in the state space in one step. In practice, it has been observed that such non-local chains can mix much faster than local MCMC algorithms such as the Metropolis algorithm. However, it has remained difficult to obtain useful quantitative bounds on mixing times of hit and run algorithms. In the following, we review several examples that have been analyzed in the literature. In a series of papers, Lov{\'a}sz \cite{Lovasz1} and Lov{\'a}sz \& Vempala \cite{LV1,LV2} showed that, for a version of the hit and run algorithm that samples from a log-concave density on $\mathbb{R}^d$, the chain mixes within order $d^{3+\epsilon}$ steps (for any fixed $\epsilon>0$). Diaconis and Ram \cite{DR3} introduced a version of the hit and run algorithm on partitions of a positive integer $k$ with eigenfunctions given by the coefficients of the Macdonald polynomials (when expanded in terms of power sum polynomials) and stationary distribution given by a new two-parameter family of measures on partitions. They showed that the chain mixes in a bounded number of steps for arbitrarily large $k$. Diaconis \cite{Dia} showed that an instance of the Burnside process (see \cite[Section 3.4]{AD})--a special case of hit and run algorithms--mixes in a bounded number of steps for arbitrary problem size. Later Diaconis and Zhong \cite{DZ} obtained sharp rate of convergence to the stationary distribution of this chain. Recently, Boardman, Rudolf, and Saloff-Coste \cite{BRS} analyzed a hit and run version of top-to-random shuffle, and showed that for this particular example, the hit and run algorithm accelerates mixing by at most a constant factor in $L^2$ and sup-norm.

\bigskip

The rest of this paper is organized as follows. In Section \ref{Sect.2}, we review the hit and run algorithm for sampling from the $L^2$ model following \cite{AD}, and introduce the hit and run algorithm for sampling from the $L^1$ model. We also discuss hit and run algorithms for sampling from the analogs of both the $L^1$ and $L^2$ models with weighted distances in this section. In the next section, we obtain mixing time upper bounds for hit and run algorithms for both the $L^1$ and $L^2$ models for certain ranges of the parameter $\beta$. The proofs are based on the path coupling technique. We also obtain mixing time lower bounds for the corresponding Metropolis algorithms, which suggest that the hit and run algorithms mix much faster than the Metropolis algorithms. Then in Section \ref{Sect.4}, we introduce a twisted version of the hit and run algorithm for sampling from a two-parameter permutation model that involves the $L^1$ distance and Cayley distance. The $\log{n}$ mixing time upper bound pertains to this generalization. 
The hit and run algorithms are further generalized to sample from higher dimensional analogs of both the $L^1$ and $L^2$ models in Section \ref{Sect.5}. Finally, we present some simulation studies in Section \ref{Sect.6}. The simulation results suggest that hit and run algorithms have much better empirical performance than Metropolis algorithms for the examples considered here.

We conclude this Introduction by setting up some notations. For any $n\in\mathbb{N}_{+}=\{1,2,3,\cdots\}$, we denote by $[n]:=\{1,2,\cdots,n\}$. For any finite set $A$, we let $|A|$ be the cardinality of $A$. 

\subsection{Acknowledgement}\label{Sect.1.3}
The author wishes to thank his PhD advisor, Persi Diaconis, for his encouragement, support, and many helpful conversations. The author also thanks Paul Switzer for suggesting the permutation models with weighted distances in Section \ref{Sect.2.3}. 

\section{Hit and run algorithms for Mallows permutation models with $L^1$ and $L^2$ distances}\label{Sect.2}

In this section, we review\slash introduce hit and run algorithms for sampling from Mallows permutation models with $L^1$ and $L^2$ distances. The hit and run algorithm for the $L^2$ model was originally introduced in \cite{AD}, and is reviewed in Section \ref{Sect.2.1}. The hit and run algorithm for the $L^1$ model is new, and is given in Section \ref{Sect.2.2}. Then in Section \ref{Sect.2.3}, following a suggestion by Paul Switzer, we introduce hit and run algorithms for the analogs of both the $L^1$ and $L^2$ models with weighted distances.

\subsection{Hit and run algorithm for Mallows permutation model with $L^2$ distance}\label{Sect.2.1}

In this subsection, we review the hit and run algorithm for sampling from Mallows permutation model with $L^2$ distance following \cite{AD}. We recall that the corresponding probability measure is denoted by $\tilde{\mathbb{P}}_{\beta}$. We assume that $\beta>0$ throughout this subsection.

The key observation is that the $L^2$ distance $\tilde{H}(\sigma,\tau)$ as defined in (\ref{Dis}) satisfies
\begin{equation}
    \tilde{H}(\sigma,Id)=\sum_{i=1}^n\sigma(i)^2+\sum_{i=1}^n i^2-2\sum_{i=1}^n i\sigma(i)=2\sum_{i=1}^n i^2-2\sum_{i=1}^n i\sigma(i)
\end{equation}
for any $\sigma\in S_n$. Hence we have
\begin{equation}\label{Cor}
    \tilde{\mathbb{P}}_{\beta}(\sigma)\propto e^{2\beta\sum_{i=1}^n i\sigma(i)}.
\end{equation}

Based on (\ref{Cor}), the following hit and run algorithm for sampling from $\tilde{\mathbb{P}}_{\beta}$ was introduced in \cite{AD}. The set of auxiliary variables is taken as $I=[0,\infty)^n$. Each step of the algorithm consists of the following two sequential parts:

\begin{itemize}
  \item Starting from $\sigma$, for each $i\in [n]$, independently sample $u_i$ from the uniform distribution on $[0, e^{2\beta i\sigma(i)}]$. Let $b_i=\frac{\log(u_i)}{2\beta i}$.
  \item Sample $\sigma'$ uniformly from the set $\{\tau\in S_n: \tau(i)\geq b_i\text{ for every }i\in [n]\}$, and move to the new state $\sigma'$.
\end{itemize}

The sampling problem in the second part can be efficiently implemented as follows: Look at places $i$ where $b_i\leq 1$, and place the symbol $1$ at a uniform choice among these places; look at places where $b_i\leq 2$, and place the symbol $2$ at a uniform choice among these places (with the place where the symbol $1$ was placed deleted); and so on. This gives the permutation $\sigma'$. Here, we say that the symbol $j$ is placed at the place $i$ if $\sigma'(i)=j$.

\subsection{Hit and run algorithm for Mallows permutation model with $L^1$ distance}\label{Sect.2.2}

In this subsection, we introduce the hit and run algorithm for sampling from Mallows permutation model with $L^1$ distance. We recall that the corresponding probability measure is denoted by $\mathbb{P}_{\beta}$. We assume that $\beta>0$ throughout this subsection.

In this case, the key observation is that for any $\sigma\in S_n$,
\begin{equation*}
    \sum_{i=1}^n (\sigma(i)-i)_{+}-\sum_{i=1}^n(\sigma(i)-i)_{-}=\sum_{i=1}^n \sigma(i)-\sum_{i=1}^n i=0.
\end{equation*}
This leads to the following equivalent form of $H(\sigma,Id)$ as defined in (\ref{Dis}):
\begin{equation}
    H(\sigma,Id)=\sum_{i=1}^n(\sigma(i)-i)_{+}+\sum_{i=1}^n(\sigma(i)-i)_{-}=2\sum_{i=1}^n (\sigma(i)-i)_{+}.
\end{equation}
Therefore,
\begin{equation}\label{foot}
    \mathbb{P}_{\beta}(\sigma)\propto e^{-2\beta \sum_{i=1}^n(\sigma(i)-i)_{+}}.
\end{equation}

Based on (\ref{foot}), we have the following hit and run algorithm for sampling from $\mathbb{P}_{\beta}$. We again take $I=[0,\infty)^n$ as the set of auxiliary variables. Each step of the algorithm consists of the following two sequential parts:
\begin{itemize}
  \item Starting from $\sigma$, for each $i\in [n]$, independently sample $u_i$ from the uniform distribution on $[0, e^{-2\beta (\sigma(i)-i)_{+} }]$. Let $b_i=i-\frac{\log(u_i)}{2\beta}$.
  \item Sample $\sigma'$ uniformly from the set $\{\tau\in S_n: \tau(i)\leq b_i\text{ for every }i\in [n]\}$, and move to the new state $\sigma'$.
\end{itemize}

Again, the sampling problem in the second part can be efficiently implemented: Look at places $i$ where $b_i\geq n$, and place the symbol $n$ at a uniform choice among these places; look at places where $b_i\geq n-1$, and place the symbol $n-1$ at a uniform choice among these places (with the place where the symbol $n$ was placed deleted); and so on. This gives the permutation $\sigma'$.

The following proposition validates the correctness of the above algorithm.

\begin{proposition}
Denote by $K_{\beta}$ the transition matrix of the above hit and run algorithm. Then $K_{\beta}$ is reversible with respect to $\mathbb{P}_{\beta}$.
\end{proposition}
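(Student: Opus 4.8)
The plan is to verify detailed balance directly, i.e. to show that $\mathbb{P}_{\beta}(\sigma)K_{\beta}(\sigma,\sigma')=\mathbb{P}_{\beta}(\sigma')K_{\beta}(\sigma',\sigma)$ for all $\sigma,\sigma'\in S_n$, by writing the one-step kernel as an integral over the auxiliary variables and exhibiting a manifestly symmetric integrand. First I would record the kernel explicitly. Writing $M_i(\sigma)=e^{-2\beta(\sigma(i)-i)_{+}}$ for the length of the interval from which $u_i$ is drawn, $b_i=b_i(u)=i-\tfrac{\log u_i}{2\beta}$, and $S(b)=\{\tau\in S_n:\tau(i)\le b_i\ \text{for all }i\}$, the two bullets of the algorithm give
\[
K_{\beta}(\sigma,\sigma')=\int_{[0,\infty)^n}\Big[\prod_{i=1}^n M_i(\sigma)^{-1}\,\mathbbm{1}\big(0\le u_i\le M_i(\sigma)\big)\Big]\,\frac{\mathbbm{1}\big(\sigma'\in S(b(u))\big)}{|S(b(u))|}\,du,
\]
where each $u_i$ contributes the uniform density $M_i(\sigma)^{-1}$ on $[0,M_i(\sigma)]$ and the second bullet contributes the uniform draw $\mathbbm{1}(\sigma'\in S(b))/|S(b)|$.

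The key step is the cancellation that is the hallmark of hit and run. Multiplying by $\mathbb{P}_{\beta}(\sigma)=Z_{\beta}^{-1}e^{-2\beta\sum_i(\sigma(i)-i)_{+}}$ and using (\ref{foot}) together with $\prod_i M_i(\sigma)^{-1}=e^{2\beta\sum_i(\sigma(i)-i)_{+}}$, the $\sigma$-dependent exponential weight cancels exactly. Rewriting the event $\{\sigma'\in S(b(u))\}$ coordinatewise as $\{u_i\le e^{2\beta(i-\sigma'(i))}\ \forall i\}$, combining it with $\{u_i\le M_i(\sigma)\}$, and using $(\sigma(i)-i)_{+}=\max(\sigma(i),i)-i$ so that $M_i(\sigma)=e^{2\beta i}e^{-2\beta\max(\sigma(i),i)}$, I obtain
\[
\mathbb{P}_{\beta}(\sigma)K_{\beta}(\sigma,\sigma')=Z_{\beta}^{-1}\int_{[0,\infty)^n}\frac{\prod_{i=1}^n\mathbbm{1}\big(0\le u_i\le e^{2\beta i}e^{-2\beta\max(\sigma(i),\,i,\,\sigma'(i))}\big)}{|S(b(u))|}\,du.
\]
Since $|S(b(u))|$ depends on $u$ alone, and the upper endpoint $e^{2\beta i}e^{-2\beta\max(\sigma(i),i,\sigma'(i))}$ is invariant under swapping $\sigma\leftrightarrow\sigma'$, the entire integrand is symmetric in $(\sigma,\sigma')$; integrating over $u$ preserves this symmetry, which is exactly detailed balance. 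Throughout, the integrand is nonzero only where both $\sigma$ and $\sigma'$ belong to $S(b(u))$, so $|S(b(u))|\ge 1$ and the uniform draw in the denominator is well defined.

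The computation is short, and the only point requiring care—the main obstacle—is the bookkeeping that translates the interval constraint $u_i\le M_i(\sigma)$ and the membership constraint $\sigma'\in S(b)$ into the single symmetric box for $u$ displayed above; everything else is nonnegative so the use of Tonelli to interchange the integral is automatic. As a structural remark, the same identity can be read off from the general auxiliary-variable construction recalled in Section \ref{Sect.1.2}: the cancellation above shows that the joint law $f(\sigma,u)=w_\sigma(u)\,\mathbb{P}_{\beta}(\sigma)$ of the state and the auxiliary variables has conditional $f(\sigma\mid u)$ equal to the uniform distribution on $S(b(u))$, which is precisely the law the second bullet resamples from. Because resampling from $f(\cdot\mid u)$ is an independence sampler and is therefore reversible with respect to $f(\cdot\mid u)$, reversibility of $K_{\beta}$ with respect to $\mathbb{P}_{\beta}$ follows from the standard mixture argument, in agreement with the direct verification.
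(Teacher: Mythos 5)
Your proof is correct and follows essentially the same route as the paper: write $K_{\beta}(\sigma,\sigma')$ as an integral over the auxiliary variables, cancel the exponential weight against $\mathbb{P}_{\beta}(\sigma)$, and observe that the combined constraints confine $u$ to the box with upper endpoints $e^{-2\beta\max\{(\sigma(i)-i)_{+},(\sigma'(i)-i)_{+}\}}=e^{2\beta i}e^{-2\beta\max(\sigma(i),i,\sigma'(i))}$, which together with the $u$-only denominator $|S(b(u))|$ is symmetric in $(\sigma,\sigma')$. This is exactly the paper's argument (your symmetric box coincides with its $B(\beta,\sigma,\sigma')$), with your closing auxiliary-variable remark being an optional extra.
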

\begin{proof}
We note that for any $\sigma\in S_n$,
\begin{equation*}
    \mathbb{P}_{\beta}(\sigma)=Z_{\beta}^{-1}e^{-2\beta\sum_{i=1}^n(\sigma(i)-i)_{+}},
\end{equation*}
where $Z_{\beta}$ is the normalizing constant.

First note that $u_i\leq 1$ for every $i\in [n]$. In this case, $\tau(i)\leq b_i$ is equivalent to $u_i\leq e^{-2\beta (\tau(i)-i)_{+}}$ for any $\tau\in S_n$. Hence for any $\sigma,\sigma'\in S_n$, we have
\begin{eqnarray*}
&&K_{\beta}(\sigma,\sigma')\\
&=&\int_{\prod_{i=1}^n[0,e^{-2\beta (\sigma(i)-i)_{+}}]} \frac{1_{\sigma'(i)\leq b_i, \forall i\in[n]}}{|\{\tau\in S_n: \tau(i)\leq b_i, \forall i\in[n]\}|} d u_1 \cdots d u_n\\
&&\times e^{2\beta \sum_{i=1}^n (\sigma(i)-i)_{+}}\\
&=& \int_{\prod_{i=1}^n[0,e^{-2\beta (\sigma(i)-i)_{+}}]}  \frac{1_{u_i\leq e^{-2\beta (\sigma'(i)-i)_{+}},\forall i\in[n]}}{|\{\tau\in S_n: u_i\leq e^{ -2 \beta(\tau(i)-i)_{+}}, \forall i\in[n]\}|}d u_1 \cdots d u_n\\
&&\times e^{2\beta \sum_{i=1}^n (\sigma(i)-i)_{+}}\\
&=& \int_{B(\beta,\sigma,\sigma')} \frac{1}{|\{\tau\in S_n: u_i\leq e^{ -2 \beta(\tau(i)-i)_{+}}, \forall i\in [n]\}|} d u_1 \cdots d u_n\\
&&\times 
 e^{2\beta \sum_{i=1}^n (\sigma(i)-i)_{+}},
\end{eqnarray*}
where $B(\beta,\sigma,\sigma'):=\prod_{i=1}^n[0,e^{-2\beta \max\{(\sigma(i)-i)_{+},(\sigma'(i)-i)_{+}\}}]$, and $b_i=i-\frac{\log(u_i)}{2\beta}$ for every $i\in [n]$.

Therefore, we have
\begin{eqnarray*}
&&\mathbb{P}_{\beta}(\sigma)K_{\beta}(\sigma,\sigma')\\
&=& \int_{B(\beta,\sigma,\sigma')} \frac{1}{|\{\tau\in S_n: u_i\leq e^{ -2 \beta(\tau(i)-i)_{+}},\forall i\in [n]\}|} d u_1 \cdots d u_n\\
&&  \times Z_{\beta}^{-1}. 
\end{eqnarray*}
Similarly,
\begin{eqnarray*}
&&\mathbb{P}_{\beta}(\sigma')K_{\beta}(\sigma',\sigma)\\
&=& \int_{B(\beta,\sigma,\sigma')} \frac{1}{|\{\tau\in S_n: u_i\leq e^{ -2 \beta(\tau(i)-i)_{+}}\forall i\in [n]\}|} d u_1 \cdots d u_n\\
&& \times Z_{\beta}^{-1}.
\end{eqnarray*}
Therefore
\begin{equation}
    \mathbb{P}_{\beta}(\sigma)K_{\beta}(\sigma,\sigma')=\mathbb{P}_{\beta}(\sigma')K_{\beta}(\sigma',\sigma).
\end{equation}
The above equation shows that the chain $K_{\beta}$ is reversible with respect to $\mathbb{P}_{\beta}$.

\end{proof}

\subsection{Hit and run algorithms for generalized models with weighted distances}\label{Sect.2.3}

Following a suggestion by Paul Switzer \cite{PS}, we consider the following generalizations of Mallows permutation models with $L^1$ and $L^2$ distances, which allow more flexibility in modeling permutation data. The generalized models involve a monotone increasing function $w: [n]\rightarrow \mathbb{R}_{+}$ (the case when $w$ is monotone decreasing can also be treated in a similar manner). The generalized models are given by the following probability measures on $S_n$:
\begin{equation}
    \mathbb{P}_{\beta,w}(\sigma) \propto \exp(-\beta\sum_{i=1}^n |w(\sigma(i))-w(i)|),
\end{equation}
\begin{equation}
    \tilde{\mathbb{P}}_{\beta,w}(\sigma)\propto \exp(-\beta\sum_{i=1}^n (w(\sigma(i))-w(i))^2),
\end{equation}
where $\sigma\in S_n$. We assume that $\beta>0$ throughout this subsection.

We note that
\begin{equation*}
    \sum_{i=1}^n (w(\sigma(i))-w(i))_{+}-\sum_{i=1}^n (w(\sigma(i))-w(i))_{-}=\sum_{i=1}^n (w(\sigma(i))-w(i))=0,
\end{equation*}
hence
\begin{eqnarray*}
    \sum_{i=1}^n |w(\sigma(i))-w(i)|&=&\sum_{i=1}^n (w(\sigma(i))-w(i))_{+}+\sum_{i=1}^n (w(\sigma(i))-w(i))_{-}\\
    &=&2\sum_{i=1}^n (w(\sigma(i))-w(i))_{+}.
\end{eqnarray*}
This leads to
\begin{equation}
    \mathbb{P}_{\beta,w}(\sigma) \propto \exp(-2\beta\sum_{i=1}^n (w(\sigma(i))-w(i))_{+}),
\end{equation}
for any $\sigma\in S_n$.

Similarly,
\begin{equation*}
    \sum_{i=1}^n (w(\sigma(i))-w(i))^2=2\sum_{i=1}^n w(i)^2-2\sum_{i=1}^n w(i)w(\sigma(i)),
\end{equation*}
which leads to
\begin{equation}
    \tilde{\mathbb{P}}_{\beta,w}(\sigma)\propto \exp(2\beta\sum_{i=1}^n w(i)w(\sigma(i))),
\end{equation}
for any $\sigma\in S_n$.

Following the procedure in Section \ref{Sect.2.2}, we have the following hit and run algorithm for sampling from $\mathbb{P}_{\beta,w}$:
\begin{itemize}
    \item Starting from $\sigma$, for every $i\in [n]$, sample $u_i$ independently from the uniform distribution on $[0,e^{-2\beta(w(\sigma(i))-w(i))_{+}}]$. Let $b_i=w(i)-\frac{\log(u_i)}{2\beta}$ for each $i\in [n]$.
    \item For every $i\in [n]$, let $q_i=\max\{j\in [n]:w(j)\leq b_i\}$. Sample $\tau\in S_n$ uniformly from the set $\{\tau\in S_n: \tau(i)\leq q_i\text{ for every }i   \in    [n]\}$. Then move to the new state $\tau$.
\end{itemize}

Similarly, following the procedure in  Section \ref{Sect.2.1}, we also have the following hit and run algorithm for sampling from $\tilde{\mathbb{P}}_{\beta,w}$:
\begin{itemize}
    \item Starting from $\sigma$, for every $i\in [n]$, sample $u_i$ independently from the uniform distribution on $[0,e^{2\beta w(i) w(\sigma(i))}]$. Let $b_i=\frac{\log(u_i)}{2\beta w(i)}$ for each $i\in [n]$. 
    \item For every $i\in [n]$, let $q_i=\min\{j\in [n]: w(j)\geq b_i\}$. Sample $\tau\in S_n$ uniformly from the set $\{\tau\in S_n: \tau(i)\geq q_i\text{ for every }i\in [n]\}$, and move to the new state $\tau$.  
\end{itemize}

\bigskip

We end this section with a final remark. The $L^1$ and $L^2$ models in this paper can be considered as special cases of the following $L^p$ model for $p>0$:
\begin{equation*}
    \mathbb{P}_{\beta,p}(\sigma)\propto \exp(-\beta\sum_{i=1}^n |\sigma(i)-i|^p),
\end{equation*}
where $\sigma\in S_n$. The hit and run algorithms as introduced above make use of certain special structures of the model when $p\in \{1,2\}$, and do not work for $p\notin\{1,2\}$.  

\section{Mixing time analysis of the hit and run algorithms}\label{Sect.3}
In this section, we analyze the mixing times of the hit and run algorithms for Mallows permutation models with $L^1$ and $L^2$ distances as introduced in Section \ref{Sect.2}. The main tool is the path coupling technique as introduced in \cite{BD}. In Section \ref{Sect.3.1} we review the background on mixing time and the path coupling technique. In Sections \ref{Sect.3.2}-\ref{Sect.3.3}, we use path coupling to obtain mixing time upper bounds for the two hit and run algorithms for certain ranges of parameters that are of practical interest. We also present mixing time lower bounds for the Metropolis algorithms sampling from both models, which show that the hit and run algorithms considered here mix much faster than the corresponding Metropolis algorithms.

\subsection{Background on mixing time and path coupling}\label{Sect.3.1}

In this subsection, we review the background on mixing time and the path coupling technique following \cite[Chapters 4 and 14]{LP}. The reader is referred to \cite{BD} and \cite[Chapter 14]{LP} for further details on path coupling. 

Consider a Markov chain $X_t$ on a finite state space $\Omega$ with stationary distribution $\pi$. For any $x\in\Omega$, we denote by $K^t_x$ the distribution of the chain at the $t$th step started from the state $x$. The total variation distance between the two probability measures $K^t_x$ and $\pi$ is defined as
\begin{equation*}
    \|K^t_x-\pi\|:=\frac{1}{2}\sum_{y\in\Omega}|K^t_x(y)-\pi(y)|=\frac{1}{2}\sum_{y\in\Omega}|K^t(x,y)-\pi(y)|.
\end{equation*}
Let $d(t):=\max_{x\in\Omega}\|K^t_x-\pi\|$. For any $\epsilon\in (0,1)$, the $\epsilon$-mixing time of the Markov chain, denoted by $t_{mix}(\epsilon)$, is defined as 
\begin{equation*}
    t_{mix}(\epsilon):=\min\{t: d(t)\leq \epsilon\}.
\end{equation*}

Path coupling is a useful technique for upper bounding mixing times of Markov chains. Assume that the state space $\Omega$ is the vertex set of a simple, undirected, connected graph $G=(\Omega,E)$ and that each edge $\{u,v\}\in E$ is assigned a length $l(u,v)\geq 1$. For any $u,v\in \Omega$, a path from $u$ to $v$ is a sequence of states $P=(x_0,x_1,\cdots,x_s)$ such that $x_0=u,x_s=v$ and $\{x_i,x_{i+1}\}\in E$ for each $i=0,1,\cdots,s-1$. The length of the path is defined as $\sum_{i=0}^{s-1}l(x_i,x_{i+1})$. The path metric on $\Omega$ is then defined as
\begin{equation}\label{Pm}
    \rho(u,v)=\min\{\text{length of }P: P\text{ is a path connecting }u\text{ to }v\}.
\end{equation}

The key result for path coupling is the following

\begin{theorem}[\cite{BD}]\label{Path}
Suppose that $\Omega,E,l,\rho$ are given as in the preceding. Denote by $diam(\Omega)=\max_{u,v\in\Omega} \rho(u,v)$. Suppose further that for each edge $\{u,v\}\in E$, there exists a coupling $(U,V)$ of the distributions $K^1_u$ and $K^1_v$ such that
\begin{equation}
    \mathbb{E}_{u,v}[\rho(U,V)]\leq \rho(u,v)e^{-\alpha},
\end{equation}
where $\alpha>0$. Then we have for any $t\geq 0$,
\begin{equation}
    \max_{u\in\Omega}\|K_u^t-\pi\| \leq e^{-\alpha t}  diam(\Omega).
\end{equation}
\end{theorem}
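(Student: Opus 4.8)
The plan is to bootstrap the per-edge contraction hypothesis into a contraction of $\rho$ for \emph{all} pairs $u,v\in\Omega$, then iterate in the number of steps $t$, and finally convert the resulting $\rho$-contraction into a total variation bound against $\pi$. The coupling inequality and convexity of total variation distance carry out the last conversion; the substance of the argument lies in extending the coupling from edges to arbitrary pairs.

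First I would extend the coupling contraction from edges to arbitrary pairs. Fix $u,v$ and choose a geodesic path $u=x_0,x_1,\dots,x_s=v$ realizing $\rho(u,v)$, so that $\sum_{i=0}^{s-1}l(x_i,x_{i+1})=\rho(u,v)$. For each edge $\{x_i,x_{i+1}\}$ the hypothesis supplies a coupling of $K^1_{x_i}$ and $K^1_{x_{i+1}}$. I would splice these together into a single coupling $(Y_0,\dots,Y_s)$ with $Y_i\sim K^1_{x_i}$ and each consecutive pair $(Y_i,Y_{i+1})$ distributed as the given edge coupling; this is legitimate because two adjacent edge couplings share the same middle marginal $K^1_{x_i}$, so the gluing lemma for couplings applies. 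The triangle inequality for $\rho$ then gives $\rho(Y_0,Y_s)\le\sum_i\rho(Y_i,Y_{i+1})$, and taking expectations together with the edge hypothesis and $\rho(x_i,x_{i+1})\le l(x_i,x_{i+1})$ yields
\[
\mathbb{E}[\rho(Y_0,Y_s)]\le e^{-\alpha}\sum_{i=0}^{s-1}\rho(x_i,x_{i+1})\le e^{-\alpha}\rho(u,v).
\]
Thus $(Y_0,Y_s)$ is a coupling of $K^1_u,K^1_v$ contracting $\rho$ by the factor $e^{-\alpha}$ for \emph{every} pair $u,v$, not merely for edges.

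Next I would iterate over time. Building a Markovian coupling step by step, applying the one-step construction above conditionally on the current pair of states and invoking the Markov property with the tower rule, produces a coupling $(U_t,V_t)$ of $K^t_u,K^t_v$ satisfying $\mathbb{E}[\rho(U_t,V_t)]\le e^{-\alpha}\mathbb{E}[\rho(U_{t-1},V_{t-1})]$, hence $\mathbb{E}[\rho(U_t,V_t)]\le e^{-\alpha t}\rho(u,v)$ by induction. Because every edge length, and therefore $\rho$, is at least $1$, we have $\rho(U_t,V_t)\ge\mathbbm{1}[U_t\neq V_t]$, so the coupling inequality gives
\[
\|K^t_u-K^t_v\|\le\mathbb{P}(U_t\neq V_t)\le\mathbb{E}[\rho(U_t,V_t)]\le e^{-\alpha t}\rho(u,v)\le e^{-\alpha t}\,diam(\Omega).
\]

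Finally I would pass from $K^t_v$ to $\pi$. Stationarity gives $\pi=\sum_v\pi(v)K^t_v$, whence $K^t_u-\pi=\sum_v\pi(v)(K^t_u-K^t_v)$, and convexity of the total variation norm yields
\[
\|K^t_u-\pi\|\le\sum_v\pi(v)\|K^t_u-K^t_v\|\le\max_v\|K^t_u-K^t_v\|\le e^{-\alpha t}\,diam(\Omega);
\]
taking the maximum over $u$ completes the proof. I expect the main obstacle to be the first step: verifying that the edge couplings can be consistently glued along the geodesic into one joint coupling whose consecutive marginals agree, and confirming that the contraction accumulates \emph{additively} along the path (via the triangle inequality for $\rho$) rather than compounding. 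Once that is in hand, the time-iteration and the passage to $\pi$ are routine applications of the Markov property, the coupling inequality, and convexity of total variation distance.
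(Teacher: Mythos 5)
Your proof is correct and is essentially the standard path coupling argument of Bubley--Dyer as presented in \cite[Chapter 14]{LP}, which is exactly what the paper relies on (it cites the result without reproducing the proof): extend the edge contraction to arbitrary pairs by gluing edge couplings along a geodesic, iterate via a Markovian coupling and the tower rule, apply the coupling inequality using $\rho\geq 1$ on distinct states, and pass to $\pi$ by stationarity and convexity of total variation. All steps, including the gluing-lemma construction and the use of $\rho(x_i,x_{i+1})\leq l(x_i,x_{i+1})$ along the geodesic, are carried out correctly.
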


\subsection{Mallows permutation model with $L^2$ distance}\label{Sect.3.2}

In this subsection, we use the path coupling technique to prove an order $\log{n}$ mixing time upper bound for the hit and run algorithm for sampling from Mallows permutation model with $L^2$ distance (as introduced in Section \ref{Sect.2.1}) for $0<\beta\leq 1\slash (16cn^2)$ with any fixed $c>1$. We denote by $\tilde{K}_{\beta}$ the transition matrix of this hit and run algorithm. For any permutation $\sigma_0\in S_n$, we denote by $\tilde{K}^l_{\beta,\sigma_0}$ the distribution of the chain after $l$ steps from the starting state $\sigma_0$. Recall that $\tilde{\mathbb{P}}_{\beta}$ is the stationary distribution of the chain.

The main result is the following

\begin{theorem}\label{rho}
Assume that $0<\beta<1\slash (16n^2)$. Then for any $l\geq 0$,
\begin{equation}
    \max_{\sigma_0\in S_n}\|\tilde{K}_{\beta,\sigma_0}^l-\tilde{\mathbb{P}}_{\beta}\|\leq 2n^2(16\beta n^2)^l.
\end{equation}
This implies that
\begin{equation}
    t_{mix}(\frac{1}{4})\leq \Big\lceil\frac{\log(8 n^2)}{-\log(16\beta n^2)}\Big\rceil.
\end{equation}
\end{theorem}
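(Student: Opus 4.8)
The plan is to invoke the path coupling theorem (Theorem~\ref{Path}). First I would equip $S_n$ with the footrule metric $\rho(\sigma,\tau)=\sum_{i=1}^n|\sigma(i)-\tau(i)|$, realized as the path metric of the graph $G$ whose edges join permutations differing by transposing the positions of two consecutive values $k$ and $k+1$ (these are precisely the pairs at footrule distance $2$, which I take as edge length). Neighboring permutations are then at distance $\rho(\sigma,\tau)=2$, and since $|\sigma(i)-\tau(i)|\le n$ for each $i$ one has $\sum_i|\sigma(i)-\tau(i)|\le n^2$, so $diam(\Omega)\le 2n^2$. By Theorem~\ref{Path} it then suffices to produce, for every edge $\{\sigma,\tau\}$, a one-step coupling $(\sigma',\tau')$ of $\tilde{K}_{\beta,\sigma}^1$ and $\tilde{K}_{\beta,\tau}^1$ with $\mathbb{E}[\rho(\sigma',\tau')]\le \rho(\sigma,\tau)\cdot 16\beta n^2$; the contraction factor $e^{-\alpha}=16\beta n^2$ is strictly less than $1$ by hypothesis, and both displayed conclusions follow.

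Fix such an edge, and let $p,q$ be the two positions at which $\sigma$ and $\tau$ disagree, with $\{\sigma(p),\sigma(q)\}=\{k,k+1\}$. I would couple the auxiliary-variable stage by sharing one uniform seed per coordinate: writing the $\sigma$-chain variable as $u_i=e^{2\beta i\sigma(i)}V_i$ and the $\tau$-chain variable as $e^{2\beta i\tau(i)}V_i$ with a common $V_i\sim\mathrm{Unif}[0,1]$, the thresholds become $b_i=\sigma(i)+\tfrac{\log V_i}{2\beta i}$ and $b_i'=\tau(i)+\tfrac{\log V_i}{2\beta i}$. These agree off $\{p,q\}$ and differ by exactly $+1$ at one of $p,q$ and $-1$ at the other, so the induced integer lower-bound constraints (position $i$ may receive only symbols $\ge\lceil b_i\rceil$) agree everywhere except at $p$ and $q$, where they differ by one. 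The task is thereby reduced to coupling two uniform draws from the sets of permutations obeying lower-bound constraint vectors that differ in exactly two coordinates by a unit.

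The crux is the coupling of the resampling stage -- the novel coupling for permutations with one-sided restrictions. I would run the two sequential symbol-placement procedures with shared randomness, symbol by symbol: while the sets of admissible positions coincide, place the current symbol at the same position in both chains. These sets can disagree only at the one symbol value where $p$ enters the $\sigma$-admissible set one step before the $\tau$-admissible set, and the one value where $q$ enters the $\tau$-set before the $\sigma$-set; at such a step I would couple the two placements through the bijection fixing the common positions and interchanging $p\leftrightarrow q$, and thereafter re-couple so that the chains agree on every remaining position. A short case analysis then shows that $\sigma'$ and $\tau'$ differ by at most a single transposition, so that on the divergence event $\rho(\sigma',\tau')=2|j-m|$, where the swapped values are $j$ and $m$. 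For the expectation bound one observes first that the constraints at $p$ and $q$ are nontrivial only on an event of probability $O(\beta n^2)$: since for $\beta=O(1/n^2)$ the term $\tfrac{\log V_i}{2\beta i}$ forces $b_i<1$ with probability $1-O(\beta n^2)$ (indeed $\mathbb{P}(b_i\ge 1)=1-e^{-2\beta i(\sigma(i)-1)}\le 2\beta n^2$), and on the complementary event the two effective constraint vectors coincide and the coupling yields $\sigma'=\tau'$. On the remaining event $\rho(\sigma',\tau')=2|j-m|$, whose expected span is balanced against the $\sim 1/(n-j)$ probability of producing the discrepancy at symbol $j$; this cancellation keeps the conditional expected distance $O(1)$, and multiplying by the $O(\beta n^2)$ regime probability gives $\mathbb{E}[\rho(\sigma',\tau')]\le\rho(\sigma,\tau)\cdot 16\beta n^2$. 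I expect this last estimate -- simultaneously controlling the probability of divergence and the magnitude of the discrepancy created by a unit shift in a one-sided constraint -- to be the main obstacle; the remainder is bookkeeping.

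Finally, the two conclusions. Theorem~\ref{Path} immediately yields
\[
\max_{\sigma_0\in S_n}\|\tilde{K}_{\beta,\sigma_0}^l-\tilde{\mathbb{P}}_{\beta}\|\le diam(\Omega)\,(16\beta n^2)^l\le 2n^2(16\beta n^2)^l,
\]
which is the first display. For the mixing time I would simply solve $2n^2(16\beta n^2)^l\le\tfrac14$: taking logarithms and dividing by $\log(16\beta n^2)<0$ (which reverses the inequality) gives $l\ge \log(8n^2)/(-\log(16\beta n^2))$, whence $t_{mix}(\tfrac14)\le\big\lceil \log(8n^2)/(-\log(16\beta n^2))\big\rceil$.
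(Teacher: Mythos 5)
Your reduction to Theorem \ref{Path} and the final arithmetic (diameter bound, solving $2n^2(16\beta n^2)^l\le 1/4$) are fine, but there are two genuine problems. First, a structural one: Spearman's footrule is \emph{not} the path metric of the graph whose edges are adjacent-value transpositions with edge length $2$. The footrule is at most twice the number of such moves needed, but is in general strictly smaller: for $n=3$ the identity and the reversal are at footrule distance $4$, while three adjacent-value transpositions are required, so the path distance is $6$. Theorem \ref{Path} genuinely requires the metric to be a path metric, because its proof glues one-edge couplings along geodesic edge-paths. This is fixable by working with the path metric itself (the paper uses unit edge lengths and the graph distance), but then all your distances change by factors that must be re-tracked through the constants.

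Second, and more seriously, the crux of your argument is unproven and is exactly where the difficulty lies. Your shared-seed coupling forces $b_p'$ and $b_p$ to differ by exactly $1$ always, so your divergence event (some constraint non-vacuous) has probability of order $\beta n^2$, and your bound then stands or falls on the claim that the conditional expected distance on this event is $O(1)$, via a cancellation between the $\sim 1/(n-j)$ creation probability and the $\sim (n-j)$ expected span of the resulting transposition. That cancellation is valid only when the admissible set at each symbol step $j$ has size close to $n-j+1$, i.e. when $N_j:=\#\{i\ \mathrm{free}: b_i\le j\}$ is near its maximum. For $\beta$ of order $1/n^2$ this is not automatic: $\mathbb{P}(b_i\ge 1)$ can be of order $\beta i n$, so the expected number of positions carrying non-vacuous constraints is of order $\beta n^3\sim n$, a constant fraction of all positions; in configurations where $N_{j_0}$ is small but subsequent $N_j$ are large, the product (creation probability)$\times$(span) is $\Theta(n)$, not $O(1)$. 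So your estimate needs an averaging argument over the random constraint configuration that the proposal does not contain, plus a verification that the ``re-coupling'' really leaves $\sigma'$ and $\tau'$ a single transposition apart when the $p$- and $q$-discrepancies are simultaneously active and interact. The paper's division of labor avoids all of this: it couples the discretized thresholds $B_{k_1},B_{k_1}'$ maximally rather than by shared seeds, exploiting that their laws are within total variation $1-e^{-2\beta k_1}=O(\beta n)$ (not $O(\beta n^2)$), and on the rare disagreement event it invokes only the deterministic worst-case bound $\rho\le 4n$ coming from the cycle-decomposition coupling of Lemma \ref{Lem}; the product $O(\beta n)\cdot O(n)$ yields $16\beta n^2$ with no expected-span analysis at all.
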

\begin{remark}
If $0<\beta\leq 1\slash(16cn^2)$ for some fixed $c>1$, then
\begin{equation*}
    t_{mix}(\frac{1}{4})\leq \Big\lceil\frac{3\log(2n)}{\log(c)}\Big\rceil.
\end{equation*}
This gives an order $\log{n}$ mixing time upper bound of the hit and run algorithm.
\end{remark}
\begin{remark}
By \cite{M1,M2}, for $\beta\sim c\slash n^2$ with fixed $c>0$, random permutations drawn from $\tilde{\mathbb{P}}_{\beta}$ converge to a non-trivial permutation limit as $n\rightarrow\infty$, and the limiting distribution of the number of fixed points (or cycles of fixed length) is different from that of uniform random permutations. Therefore, the behavior of a random permutation drawn from $\tilde{\mathbb{P}}_{\beta}$ under such scaling is quite different from that of a uniform random permutation.
\end{remark}

Theorem \ref{rho} is proved using the path coupling technique. Note that the state space $\Omega=S_n$. The edge set is specified as follows: For any two permutations $\sigma,\tau\in S_n$, we connect them with an edge if and only if $\tau=(i,i+1)\sigma$ for some $i\in\{1,\cdots , n-1\}$. This gives a graph $G=(\Omega,E)$. For every edge $\{u,v\}\in E$, we let $l(u,v)=1$. We further let $\rho$ be the path metric on $\Omega$ as defined by (\ref{Pm}).

We first establish the following lemma on permutations with one-sided restrictions. The combinatorics of permutations with one-sided restrictions is well studied. \cite{DGH} derives the distribution of various metrics under the uniform distribution. \cite{Blum} studies the running time for the Metropolis algorithm.

\begin{lemma}\label{Lem}
Suppose that $k\in [n]$, $b_k'\in [n]$, and $b_i\in [n]$ for every $i\in [n]$. Define 
\begin{equation*}
    \mathcal{S}:=\{\sigma\in S_n: \sigma(i)\geq b_i\text{ for every }i\in [n]\},
\end{equation*}
\begin{equation*}
    \mathcal{S}':=\{\sigma\in S_n: \sigma(i)\geq b_i\text{ for every }i\in [n]\backslash \{k\}, \sigma(k)\geq b_k'\}.
\end{equation*}
Assume that $\mathcal{S},\mathcal{S}'\neq \emptyset$. Further let $\mu,\mu'$ be the uniform distributions on $\mathcal{S},\mathcal{S}'$, respectively. Then there exists a coupling $(X,X')$ of $\mu$ and $\mu'$, such that 
\begin{equation}
    \rho(X,X')\leq 2n,
\end{equation}
where the path metric $\rho=\rho(u,v)$ is defined in (\ref{Pm}).
\end{lemma}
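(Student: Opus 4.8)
The plan is to build the coupling directly from the sequential sampling procedure for one-sided restrictions described in Section \ref{Sect.2.1}, which produces a uniform sample by placing the symbols $1,2,\ldots,n$ in increasing order: when symbol $j$ is to be placed, it is put at a uniformly chosen position among the currently unfilled positions $i$ with $b_i\le j$. Assume without loss of generality that $b_k'>b_k$ (if $b_k'=b_k$ then $\mathcal{S}=\mathcal{S}'$ and the coupling is trivial, while $b_k'<b_k$ is symmetric upon exchanging the two sets). Then $\mathcal{S}'\subseteq\mathcal{S}$, and the two procedures differ only in that position $k$ is eligible to receive the symbols $j\in\{b_k,\ldots,b_k'-1\}$ in the $\mathcal{S}$-procedure but not in the $\mathcal{S}'$-procedure.

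I would run the two procedures simultaneously with shared randomness, maintaining the invariant that the two partially built permutations agree except for a single \emph{defect} (one position filled in one chain but not the other, the two chains having filled equally many positions). Whenever the admissible sets for the current symbol coincide, I place that symbol at the same position in both chains; the only place the two can be forced apart is position $k$ during the window $j\in\{b_k,\ldots,b_k'-1\}$. At each such step I place symbol $j$ at $k$ in the $\mathcal{S}$-chain with exactly the probability $1/|A_j|$ dictated by its uniform marginal (where $A_j$ is the admissible set for the $\mathcal{S}$-chain), and otherwise place it at a common position in both; this preserves both marginals (uniform on $\mathcal{S}$ and on $\mathcal{S}'$) by construction. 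If some symbol $j_0$ is thereby placed at $k$ in the $\mathcal{S}$-chain, a defect appears, and from then on I couple the two uniform choices at each step by a maximal coupling so that the single defect \emph{propagates} but never multiplies.

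The structural claim I would prove is that at termination $X$ and $X'$ differ by a single cyclic rotation of a set of symbols $s_0<s_1<\cdots<s_t$ among their positions, namely the symbols placed at mismatched positions during the propagation of the defect; since symbols are placed in increasing order, they appear along the cycle in increasing value. Granting this, the key computation is a telescoping of the footrule: writing $\pi_0,\ldots,\pi_t$ for the positions carrying $s_0,\ldots,s_t$ in $X$, the cyclic shift gives
\begin{equation*}
    H(X,X')=\sum_{i=1}^n |X(i)-X'(i)|=\sum_{m=1}^{t}(s_m-s_{m-1})+(s_t-s_0)=2(s_t-s_0)\le 2(n-1).
\end{equation*}
Finally, since the path metric $\rho$ in (\ref{Pm}) generated by adjacent transpositions of values equals the number of inversions of $X(X')^{-1}$, the Diaconis--Graham inequality $\mathrm{inv}(\cdot)\le D(\cdot)$ (footrule dominates inversions) yields $\rho(X,X')\le H(X,X')\le 2n$, as required.

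The main obstacle I anticipate is making the defect-propagation invariant fully rigorous: I must verify that the single defect never splits and that the mismatched symbols really form one increasing cycle, while carefully treating the steps where the admissible sets $A_j$ and $A_j'$ have \emph{different} cardinalities (which occurs precisely when exactly one of the two current defect positions has just become placeable). Checking that a maximal coupling at each such step keeps both marginals uniform while moving the defect by one position, rather than creating a second mismatch, is the delicate point; by contrast, the telescoping identity and the passage from $H$ to $\rho$ are routine.
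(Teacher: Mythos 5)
Your proposal is correct and follows essentially the same route as the paper: the paper's Part 1 is exactly your single-defect coupling (its explicit step-by-step construction coincides with the maximal coupling you describe, and it carries out the invariant you flag as the delicate point), and its Part 2 establishes precisely your structural claim that the mismatches form a single cycle of symbols $l_1<\cdots<l_q$ with $X'^{-1}(l_1)=k$, $X^{-1}(l_q)=k$, and $X^{-1}(l_s)=X'^{-1}(l_{s+1})$. The only cosmetic difference is the final step: instead of telescoping the footrule and invoking the Diaconis--Graham inequality, the paper writes $X^{-1}=X'^{-1}(l_1,l_2)\cdots(l_{q-1},l_q)$ and decomposes each transposition $(l_s,l_{s+1})$ into at most $2(l_{s+1}-l_s)$ adjacent transpositions, which telescopes to the same bound of $2n$.
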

\begin{proof}
The proof consists of two parts. In the first part, we construct the coupling $(X,X')$. Then in the second part, we obtain the desired upper bound on $\rho(X,X')$.

\paragraph{Part 1}

Without loss of generality we assume that $b_k'< b_k$ (note that if $b_k'=b_k$, then $\mu'=\mu$, and the conclusion holds trivially). For every $l\in [n]$, let
\begin{equation*}
    N_l=\#\{i\in [n]: b_i\leq l\}-l+1.
\end{equation*}

We construct the coupling as follows. Sequentially for $l=1,\cdots,b_k'-1$, we look at places $i$ with $b_i\leq l$ that have not yet been taken, and let $X^{-1}(l)=X'^{-1}(l)$ be a uniform choice from these available places. Note that $X^{-1}(l)=X'^{-1}(l)$ for every $l\in\{1,\cdots,b_k'-1\}$.

When $l=b_k'$, there are $N_l$ possible choices of $X^{-1}(l)$, which we denote by $j_{l,1},\cdots,j_{l,N_l}$, and $N_l+1$ possible choices $j_{l,1},\cdots,j_{l,N_l},k$ of $X'^{-1}(l)$. We define the coupling at this step as follows. For every $s\in\{1,\cdots,N_l\}$, we let $X^{-1}(l)=X'^{-1}(l)=j_{l,s}$ with probability $\frac{1}{N_l+1}$; with probability $\frac{1}{N_l+1}$, we let $X'^{-1}(l)=k$ and let $X^{-1}(l)$ be a uniform choice among $j_{l,1},\cdots,j_{l,N_l}$. Note that for either case, if $l+1<b_k$, the set of possible choices of $X'^{-1}(l+1)$ is the union of set of possible choices of $X^{-1}(l+1)$ and another element (one of $j_{l,1},\cdots,j_{l,N_l},k$).

Now we define the coupling for $b_k'+1\leq l\leq b_k-1$ inductively. Assume that the set of possible $N_l+1$ choices of $X'^{-1}(l)$ is the union of the set of possible $N_l$ choices of $X^{-1}(l)$ (denoted by $j_{l,1},\cdots,j_{l,N_l}$) and another element (denoted by $t_l$). Note that this is satisfied for $l=b_k'+1$ if $b_k'+1\leq b_k-1$. For every $s\in\{1,\cdots,N_l\}$, we let $X^{-1}(l)=X'^{-1}(l)=j_s$ with probability $\frac{1}{N_l+1}$; with probability $\frac{1}{N_l+1}$, we let $X'^{-1}(l)=t_l$ and let $X^{-1}(l)$ be a uniform choice among $j_{l,1},\cdots,j_{l,N_l}$. Note that if $l+1\leq b_k-1$, then the assumption is satisfied with $l$ replaced by $l+1$. Therefore, the above procedure gives a valid coupling for $b_k'+1\leq l\leq b_k-1$. 

When $l=b_k$, suppose that $j_{l,1},\cdots,j_{l,N_l-1},k$ are the $N_l$ possible choices of $X^{-1}(l)$. Note that $X'^{-1}(l)$ also has $N_l$ possible choices, which are given by $j_{l,1},\cdots,j_{l,N_l-1},t$ for some $t\notin\{j_{l,1},\cdots,j_{l,N_l-1}\}$ by our construction. For every $s\in\{1,\cdots,N_l-1\}$, we let $X^{-1}(l)=X'^{-1}(l)=j_{l,s}$ with probability $\frac{1}{N_l}$; we also let $X^{-1}(l)=k$ and $X'^{-1}(l)=t$ with probability $\frac{1}{N_l}$. 

We define the coupling for $l=b_k+1,\cdots,n$ inductively as follows. Assume that either of the following two possibilities is true: (a) the possible $N_{l}$ choices of $X^{-1}(l)$ and $X'^{-1}(l)$ are the same; (b) the $N_l$ possible choices of $X^{-1}(l)$ are $j_{l,1},\cdots,j_{l,N_l-1},k$, and the $N_l$ possible choices of $X'^{-1}(l)$ are $j_{l,1},\cdots,j_{l,N_l-1},t$ for some $t\notin \{j_{l,1},\cdots,j_{l,N_l-1}\}$. Note that the assumption holds when $l=b_k+1$. For case (a), we let $X^{-1}(l)=X'^{-1}(l)$ be a uniform choice from these choices. For case (b), for every $s\in\{1,\cdots,N_l-1\}$, we let $X^{-1}(l)=X'^{-1}(l)=j_{l,s}$ with probability $\frac{1}{N_l}$, and let $X^{-1}(l)=k$ and $X'^{-1}(l)=t$ with probability $\frac{1}{N_l}$. Note that if $l+1\leq n$, then the assumption is still satisfied with $l$ replaced by $l+1$. This finishes our coupling.

\paragraph{Part2}

We argue below that $X'^{-1}$ can be obtained from $X^{-1}$ by at most $2n$ adjacent transpositions. Order the elements of the set $\{l\in [n]: X^{-1}(l)\neq X'^{-1}(l)\}$ as $l_1<l_2<\cdots<l_q$. By construction, $X'^{-1}(l_1)=k$, $X^{-1}(l_q)=k$, and $X^{-1}(l_s)=X'^{-1}(l_{s+1})$ for any $s=1,\cdots,q-1$. Hence we have that
\begin{eqnarray*}
    X^{-1} &=& X'^{-1} (l_1,l_2) \cdots (l_{q-1},l_q).
\end{eqnarray*}
Note that $(l_s,l_{s+1})=(l_s,l_s+1)\cdots (l_{s+1}-2,l_{s+1}-1)(l_{s+1}-1,l_{s+1})(l_{s+1}-2,l_{s+1}-1)\cdots (l_s,l_s+1)$ for any $1\leq s\leq q-1$. Therefore, $X'^{-1}$ can be obtained from $X^{-1}$ by at most
\begin{equation}
    \sum_{s=1}^{q-1}  2(l_{s+1}-l_s)\leq 2n
\end{equation}
adjacent transpositions. Hence
\begin{equation}
    \rho(X,X')\leq 2n.
\end{equation}

\end{proof}

\begin{proof}[Proof of Theorem \ref{rho}]
We use the path coupling technique. The choice of the edge set and the path metric is given as in the preceding. Note that $diam(\Omega)\leq 2n^2$.

For any two permutations $\sigma,\sigma'\in S_n$ that are adjacent in the graph $G=(\Omega,E)$, we define the coupling as follows. Assume that $\sigma'=(i,i+1)\sigma$, where $i\in\{1,\cdots,n-1\}$. Suppose that $\sigma(k_1)=i$ and $\sigma(k_2)=i+1$. Then $\sigma'(k_1)=i+1$ and $\sigma'(k_2)=i$. Below we will construct a coupling $(X,X')$ of the distributions $\tilde{K}_{\beta,\sigma}^1$ and $\tilde{K}_{\beta,\sigma'}^1$.

Consider the first part of the hit and run algorithm. For every $j\in [n]$, let $u_j$ be sampled from the uniform distribution on $[0,e^{2\beta j\sigma(j)}]$ and $b_j=\frac{\log(u_j)}{2\beta j}$. For every $j\in [n]$, let $B_j=\max\{\lceil b_j \rceil,1\}$, and let $B_j'$ be the corresponding quantity for $\sigma'$. It suffices to construct the coupling between $\{B_j\}_{j=1}^n$ and $\{B_j'\}_{j=1}^n$. 

Note that for every $k=2,\cdots,\sigma(j)$, 
\begin{equation}
    \mathbb{P}(B_j=k)=\mathbb{P}(k-1<b_j\leq k)=(1-e^{-2\beta j})  e^{2\beta j(k-\sigma(j))}.
\end{equation}
Moreover,
\begin{equation}
    \mathbb{P}(B_j=1)=\mathbb{P}(b_j\leq 1)=e^{2\beta  j (1-\sigma(j))}.
\end{equation}
For $j\in [n]$ with $j\neq k_1,k_2$, we let $B_j'=B_j$. 

For $j=k_1$, we construct the coupling between $B_{k_1}$ and $B_{k_1}'$ as below. We let $B_{k_1}=B_{k_1}'=1$ with probability $e^{-2\beta k_1 i}$. For every $k\in\{2,\cdots,i\}$, we let $B_{k_1}=B_{k_1}'=k$ with probability $(1-e^{-2\beta k_1})e^{2\beta k_1(k-i-1)}$. We also let $B_{k_1}=1$ and $B_{k_1}'=i+1$ with probability $e^{2\beta k_1(1-i)}(1-e^{-2\beta k_1})$. For every $k\in\{2,\cdots,i\}$, we let $B_{k_1}=k$ and $B_{k_1}'=i+1$ with probability $(1-e^{-2\beta k_1})^2 e^{2\beta k_1 (k-i)}$.

For $j=k_2$, we construct the coupling between $B_{k_2}$ and $B_{k_2}'$ similarly. We let $B_{k_2}=B_{k_2}'=1$ with probability $e^{-2\beta k_2 i}$. For every $k\in\{2,\cdots,i\}$, we let $B_{k_2}=B_{k_2}'=k$ with probability $(1-e^{-2\beta k_2})e^{2\beta k_2(k-i-1)}$. We also let $B'_{k_2}=1$ and $B_{k_2}=i+1$ with probability $e^{2\beta k_2(1-i)}(1-e^{-2\beta k_2})$. For every $k\in\{2,\cdots,i\}$, we let $B'_{k_2}=k$ and $B_{k_2}=i+1$ with probability $(1-e^{-2\beta k_2})^2 e^{2\beta k_2 (k-i)}$.

It can be checked that this gives a valid coupling between $\{B_j\}_{j=1}^n$ and $\{B'_j\}_{j=1}^n$. We also have
\begin{equation}
    \mathbb{P}(B_{k_1}\neq B_{k_1}')=1-e^{-2\beta k_1},\quad \mathbb{P}(B_{k_2}\neq B_{k_2}')=1-e^{-2\beta k_2}.
\end{equation}
By the union bound,
\begin{equation}
    \mathbb{P}(B_j\neq B_j'\text{ for some }j\in [n])\leq 2-e^{-2\beta k_1}-e^{-2\beta k_2}\leq 2\beta(k_1+k_2).
\end{equation}

Now we consider the second part of the hit-and-run algorithm. Let
\begin{equation*}
    \mathcal{S}=\{\tau\in S_n:\tau(j)\geq B_j\text{ for every }j\in [n]\},
\end{equation*}
\begin{equation*}
    \mathcal{S}'=\{\tau\in S_n:\tau(j)\geq B_j'\text{ for every }j\in [n]\}
\end{equation*}
Note that
\begin{equation*}
    \mathcal{S}=\{\tau\in S_n:\tau(j)\geq b_j\text{ for every }j\in [n]\},
\end{equation*}
\begin{equation*}
    \mathcal{S}'=\{\tau\in S_n:\tau(j)\geq b_j'\text{ for every }j\in [n]\}.
\end{equation*}

If $B_j=B_j'$ for every $j\in [n]$, then $\mathcal{S}=\mathcal{S}'$. We let $X=X'$ be the uniform distribution on $\mathcal{S}$. In this case we have
\begin{equation}
    \rho(X,X')=0.
\end{equation}

Otherwise, either $B_{k_1}'\neq B_{k_1}$ or $B_{k_2}'\neq B_{k_2}$. Let 
\begin{equation*}
    \mathcal{S}''=\{\tau\in S_n:\tau(j)\geq B_j\text{ for every }j\in [n]\backslash \{k_2\}, \text{ and }\tau(k_2)\geq B'_{k_2}\}.
\end{equation*}
By Lemma \ref{Lem}, there exists a coupling $(X_1,Z_1)$ of the uniform distributions on $\mathcal{S}$ and $\mathcal{S}''$, such that
\begin{equation}
    \rho(X_1,Z_1)\leq 2n.
\end{equation}
Similarly, there exists a coupling $(Z_2,Y_2)$ of the uniform distributions on $\mathcal{S}''$ and $\mathcal{S}'$, such that
\begin{equation}
    \rho(Z_2,Y_2)\leq 2n.
\end{equation}
By the proof of \cite[Lemma 14.3]{LP}, there exists a coupling $(X,X')$ of the uniform distributions on $\mathcal{S}$ and $\mathcal{S}'$, such that
\begin{equation}
    \rho(X,X')\leq 4n.
\end{equation}

It can be checked that $(X,X')$ gives a valid coupling of $\tilde{K}_{\beta,\sigma}^1$ and $\tilde{K}_{\beta,\sigma'}^1$. Moreover,
\begin{equation}
    \mathbb{E}[\rho(X,X')]\leq 4n \mathbb{P}( B_j \neq B_j'  \text{ for some }j\in [n])\leq 8\beta n(k_1+k_2)\leq 16\beta n^2.
\end{equation}
Therefore by Theorem \ref{Path}, we have that for any $l\geq 0$,
\begin{equation}
    \max_{\sigma_0\in S_n}\|\tilde{K}_{\beta,\sigma_0}^l-\tilde{\mathbb{P}}_{\beta}\|\leq 2n^2 (16\beta n^2)^l.
\end{equation}

\end{proof}

\subsection{Mallows permutation model with $L^1$ distance}\label{Sect.3.3}

In this subsection, again by using the path coupling technique, we prove an order $\log{n}$ mixing time upper bound for the hit and run algorithm sampling from Mallows permutation model with $L^1$ distance (as introduced in Section \ref{Sect.2.2}) for $0<\beta\leq 1\slash (16cn)$ with any fixed $c>1$. The proof strategy is similar to that in Section \ref{Sect.3.2}.

Recall that we denote by $K_{\beta}$ the transition matrix of the hit and run algorithm, and that the stationary distribution of the chain is $\mathbb{P}_{\beta}$.  We also denote by $K_{\beta,\sigma_0}^l$ the distribution of the chain after $l$ steps started from the state $\sigma_0$. 

We have the following main result.

\begin{theorem}\label{footru}
Assume that $0<\beta< 1\slash (16n)$. Then for any $l\geq 0$,
\begin{equation}
    \max_{\sigma_0\in S_n}\|K_{\beta,\sigma_0}^l-\mathbb{P}_{\beta}\|\leq 2n^2 (16\beta n)^l.
\end{equation}
This implies that
\begin{equation}
    t_{mix}(\frac{1}{4})\leq \Big\lceil  \frac{\log(8 n^2)}{-\log(16 \beta n)}\Big\rceil.
\end{equation}
\end{theorem}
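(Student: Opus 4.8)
The plan is to mirror the path coupling argument used for the $L^2$ model in Theorem \ref{rho}. I take the same graph on $\Omega = S_n$: join $\sigma$ and $\sigma'$ by an edge exactly when $\sigma' = (i,i+1)\sigma$ for some $i \in \{1,\dots,n-1\}$, give each edge length one, and let $\rho$ be the induced path metric, so that $diam(\Omega) \le 2n^2$. Since the $L^1$ hit and run algorithm samples $\sigma'$ from $\{\tau : \tau(i) \le b_i \text{ for all } i\}$, an \emph{upper}-sided restriction rather than the lower-sided restriction of the $L^2$ case, I first need the analogue of Lemma \ref{Lem} for sets $\{\sigma : \sigma(i) \le b_i \text{ for all } i\}$ with one coordinate constraint altered. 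This reduces to Lemma \ref{Lem} via the reflection $R$ defined by $(R\tau)(i) = n+1-\tau(i)$: $R$ turns the constraint $\tau(i)\le b_i$ into $(R\tau)(i) \ge n+1-b_i$, sends the uniform measure on an upper-restricted set to the uniform measure on a lower-restricted one, and is a graph automorphism preserving $\rho$ (it carries the adjacent value-transposition $(i,i+1)$ to $(n-i,n+1-i)$). Hence the reflected version of Lemma \ref{Lem} again produces a coupling with $\rho(X,X') \le 2n$.

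Next, for an edge $\sigma' = (i,i+1)\sigma$ I set $k_1 = \sigma^{-1}(i)$ and $k_2 = \sigma^{-1}(i+1)$ and couple one step of the two chains in two stages. In the first stage I couple the rounded thresholds $B_j := \min\{\lfloor b_j \rfloor, n\}$ and $B_j'$, where $b_j = j - \log(u_j)/(2\beta)$. A direct computation shows that $B_j$ follows the truncated geometric law $\mathbb{P}(B_j = k) = e^{2\beta(a_j - k)}(1 - e^{-2\beta})$ for $a_j \le k \le n-1$ and $\mathbb{P}(B_j = n) = e^{2\beta(a_j-n)}$, with $a_j = \max\{\sigma(j), j\}$. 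For $j \notin \{k_1, k_2\}$ the laws of $B_j$ and $B_j'$ coincide and I set $B_j = B_j'$; at $k_1$ and $k_2$ the starting points $a_j$ and $a_j'$ differ by at most one, so the maximal coupling of the two truncated geometric laws yields $\mathbb{P}(B_{k_1} \ne B_{k_1}') \le 1 - e^{-2\beta}$ and likewise at $k_2$. The decisive point, and where the $L^1$ analysis departs quantitatively from the $L^2$ one, is that this mismatch probability is of order $\beta$ and does \emph{not} grow with $k_1$ or $k_2$; this traces back to the absence of a factor of $j$ in the logarithmic term of $b_j = j - \log(u_j)/(2\beta)$, in contrast to the $L^2$ expression $b_j = \log(u_j)/(2\beta j)$.

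In the second stage I pass to the feasible sets $\mathcal{S} = \{\tau : \tau(j) \le B_j \ \forall j\}$ and $\mathcal{S}' = \{\tau : \tau(j) \le B_j' \ \forall j\}$. If all $B_j = B_j'$ then $\mathcal{S} = \mathcal{S}'$ and the coupling is the identity, giving $\rho(X,X') = 0$; otherwise $\mathcal{S}$ and $\mathcal{S}'$ differ only through the constraints at $k_1$ and $k_2$, and introducing the intermediate set $\mathcal{S}''$ that alters only the $k_2$-constraint lets me apply the reflected Lemma \ref{Lem} twice and glue the couplings through the proof of \cite[Lemma 14.3]{LP}, producing a coupling with $\rho(X,X') \le 4n$. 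Combining the stages gives $\mathbb{E}[\rho(X,X')] \le 4n\,\mathbb{P}(\text{some } B_j \ne B_j') \le 8n(1 - e^{-2\beta}) \le 16\beta n$. Since $\beta < 1/(16n)$ forces $16\beta n < 1$, feeding $e^{-\alpha} = 16\beta n$ and $diam(\Omega) \le 2n^2$ into Theorem \ref{Path} yields $\max_{\sigma_0}\|K_{\beta,\sigma_0}^l - \mathbb{P}_\beta\| \le 2n^2(16\beta n)^l$; solving $2n^2(16\beta n)^l \le 1/4$ for $l$ (using $\log(16\beta n) < 0$) produces the stated bound on $t_{mix}(1/4)$.

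The step I expect to be the main obstacle is the first stage: pinning down the exact law of the rounded thresholds $B_j$, assembling these into an explicit \emph{joint} coupling of $\{B_j\}$ and $\{B_j'\}$ (not merely marginal maximal couplings at $k_1$ and $k_2$), and verifying that the total mismatch probability stays $O(\beta)$ uniformly in $k_1, k_2$, which is exactly what drives the improved $16\beta n$ contraction factor. Stating the reflected form of Lemma \ref{Lem} cleanly and checking that gluing the two single-coordinate couplings through $\mathcal{S}''$ respects the path metric are the remaining points needing care.
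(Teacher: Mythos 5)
Your proposal is correct and follows essentially the same path-coupling argument as the paper: the same adjacent-transposition graph with $diam(\Omega)\le 2n^2$, the same two-stage coupling (truncated geometric thresholds $B_j$ started at $\max\{\sigma(j),j\}$ with per-coordinate mismatch probability $1-e^{-2\beta}$ independent of $k_1,k_2$, then the one-sided-restriction coupling glued through an intermediate set $\mathcal{S}''$ to get $\rho(X,X')\le 4n$), and the same application of Theorem \ref{Path} with contraction factor $16\beta n$. The only minor differences are that you derive the upper-restriction coupling lemma (the paper's Lemma \ref{L2}) from Lemma \ref{Lem} via the reflection $(R\tau)(i)=n+1-\tau(i)$ --- a valid shortcut, since $R$ is indeed a $\rho$-preserving graph automorphism --- whereas the paper states Lemma \ref{L2} and notes it is proved by repeating the proof of Lemma \ref{Lem}; and your ``maximal coupling'' of the thresholds is exactly the explicit coupling the paper writes out, with the product structure over independent coordinates resolving the joint-coupling concern you raise.
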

\begin{remark}
If $0<\beta\leq  1\slash (16 cn)$ for some fixed $c>1$, then
\begin{equation*}
    t_{mix}(\frac{1}{4})\leq \Big\lceil\frac{3\log(2n)}{\log(c)} \Big\rceil.
\end{equation*}
This gives an order $\log{n}$ mixing time upper bound of the hit and run algorithm.
\end{remark}
\begin{remark}
By \cite{M1,M2}, for $\beta\sim c\slash n$ with fixed $c>0$, random permutations drawn from $\mathbb{P}_{\beta}$ converge to a non-trivial permutation limit as $n\rightarrow\infty$, and the limiting distribution of the number of fixed points (or cycles of fixed length) is different from that of uniform random permutations. Therefore, the behavior of a random permutation drawn from $\mathbb{P}_{\beta}$ under such scaling is quite different from that of a uniform random permutation.
\end{remark}

We first present the following lemma, which can be proved in the same way as Lemma \ref{Lem}.

\begin{lemma}\label{L2}
Suppose that $k\in [n]$, $b_k'\in [n]$, and $b_i\in [n]$ for every $i\in [n]$. Define 
\begin{equation*}
    \mathcal{S}:=\{\sigma\in S_n: \sigma(i)\leq b_i\text{ for every }i\in [n] \},
\end{equation*}
\begin{equation*}
    \mathcal{S}':=\{\sigma\in S_n: \sigma(i)\leq b_i\text{ for every }i\in [n]\backslash\{k\}, \sigma(k)\leq b_k'\}.
\end{equation*}
Assume that $\mathcal{S},\mathcal{S}'\neq \emptyset$. Further let $\mu,\mu'$ be the uniform distributions on $\mathcal{S},\mathcal{S}'$, respectively. Then there exists a coupling $(X,X')$ of $\mu$ and $\mu'$, such that 
\begin{equation}
    \rho(X,X')\leq 2n,
\end{equation}
where the path metric $\rho=\rho(u,v)$ is defined in (\ref{Pm}) with the graph $G=(\Omega,E)$ and the length function $l(u,v)$ defined as in Section \ref{Sect.3.2}.
\end{lemma}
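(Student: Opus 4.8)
The plan is to reduce Lemma \ref{L2} to Lemma \ref{Lem} by a value-reversal symmetry, rather than redo the coupling construction from scratch. Let $w_0\in S_n$ be the reversal permutation $w_0(j)=n+1-j$, and define the involution $\Phi:S_n\to S_n$ by $\Phi(\sigma)=w_0\sigma$, so that $(\Phi\sigma)(i)=n+1-\sigma(i)$. The key point is that $\Phi$ is an isometry of the path metric $\rho$ while simultaneously converting the one-sided \emph{upper}-bound constraints appearing in Lemma \ref{L2} into the one-sided \emph{lower}-bound constraints of Lemma \ref{Lem}.

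First I would check that $\Phi$ is a graph isomorphism of $G=(\Omega,E)$. Since the edges are the pairs $\{\sigma,(i,i+1)\sigma\}$ with $i\in\{1,\dots,n-1\}$, it suffices to verify the identity
\begin{equation*}
w_0\,(i,i+1)=(n-i,\,n+1-i)\,w_0,
\end{equation*}
which shows that left multiplication by $w_0$ sends the edge $\{\sigma,(i,i+1)\sigma\}$ to the edge $\{\Phi\sigma,(n-i,n+1-i)\Phi\sigma\}$; as $i$ ranges over $\{1,\dots,n-1\}$ so does $n-i$, so $\Phi$ permutes the edge set. Because all edge lengths equal $1$, $\Phi$ preserves path lengths and hence $\rho(\Phi u,\Phi v)=\rho(u,v)$. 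Second, from $\sigma(i)\le b_i\iff(\Phi\sigma)(i)\ge n+1-b_i$ I get that $\Phi$ maps $\mathcal{S}$ and $\mathcal{S}'$ bijectively onto the lower-bound sets
\begin{equation*}
\tilde{\mathcal{S}}=\{\tau:\tau(i)\ge \tilde b_i\ \forall i\},\qquad
\tilde{\mathcal{S}}'=\{\tau:\tau(i)\ge \tilde b_i\ \forall i\neq k,\ \tau(k)\ge \tilde b_k'\},
\end{equation*}
where $\tilde b_i=n+1-b_i\in[n]$ and $\tilde b_k'=n+1-b_k'\in[n]$. These are nonempty (as $\mathcal{S},\mathcal{S}'\neq\emptyset$) and have exactly the form required in Lemma \ref{Lem}, and $\Phi$ carries the uniform measures on $\mathcal{S},\mathcal{S}'$ to the uniform measures on $\tilde{\mathcal{S}},\tilde{\mathcal{S}}'$.

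Finally I would invoke Lemma \ref{Lem} to obtain a coupling $(Y,Y')$ of the uniform distributions on $\tilde{\mathcal{S}},\tilde{\mathcal{S}}'$ with $\rho(Y,Y')\le 2n$, and push it back through the involution: $(X,X')=(\Phi Y,\Phi Y')$ is then a coupling of $\mu,\mu'$ with $\rho(X,X')=\rho(Y,Y')\le 2n$, as required. The only genuine thing to verify is the isometry claim of the first step — i.e. the transposition identity above and the fact that $\Phi$ bijects $E$ onto $E$ — after which everything is formal. If one instead prefers the self-contained route suggested in the text, one repeats the construction in the proof of Lemma \ref{Lem} verbatim but with the symbols placed in decreasing order $n,n-1,\dots,1$ (matching the efficient sampler of Section \ref{Sect.2.2}) and with every inequality reversed; there the main obstacle is purely bookkeeping: checking that the coupling has the correct marginals at the threshold level and that the single propagating discrepancy again yields $X'^{-1}=X^{-1}(l_1,l_2)\cdots(l_{q-1},l_q)$, so that decomposing each $(l_s,l_{s+1})$ into $2(l_{s+1}-l_s)$ adjacent transpositions sums to at most $2n$.
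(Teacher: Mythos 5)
Your proof is correct, but it takes a genuinely different route from the paper. The paper simply asserts that Lemma \ref{L2} "can be proved in the same way as Lemma \ref{Lem}," i.e.\ by repeating the entire coupling construction with all inequalities reversed (symbols placed in decreasing order $n, n-1,\dots,1$) --- essentially your fallback option. You instead reduce Lemma \ref{L2} to Lemma \ref{Lem} via the value-reversal involution $\Phi(\sigma)=w_0\sigma$, and the reduction is sound: the conjugation identity $w_0(i,i+1)=(n-i,n+1-i)w_0$ does hold (check it on $j=i$, $j=i+1$, and $j\notin\{i,i+1\}$), so left multiplication by $w_0$ permutes the edge set of $G$ and is a $\rho$-isometry; the constraint translation $\sigma(i)\le b_i\iff(\Phi\sigma)(i)\ge n+1-b_i$ puts $\tilde{\mathcal{S}},\tilde{\mathcal{S}}'$ exactly in the form of Lemma \ref{Lem} with thresholds still in $[n]$; and since Lemma \ref{Lem} gives an almost-sure bound $\rho(Y,Y')\le 2n$ (not merely a bound in expectation), pushing the coupling back through the bijection $\Phi$ preserves both the marginals and the bound. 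What your approach buys is the elimination of duplicated bookkeeping and an honest proof where the paper has only an assertion; the only price is verifying one transposition identity. What the paper's approach buys is a template that generalizes: the analogous twisted statement, Lemma \ref{L3}, weights permutations by $e^{\beta_2 C(\tau)}$, and the cycle count $C$ is not invariant under $\sigma\mapsto w_0\sigma$, so your symmetry reduction would not transfer there, whereas the "repeat the construction with inequalities reversed" paradigm does.
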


\begin{proof}[Proof of Theorem \ref{footru}]
We use the path coupling technique again. The choice of the graph structure $G=(\Omega,E)$ and the path metric $\rho$ is the same as that from the proof of Theorem \ref{rho}. We have $diam(\Omega)\leq 2n^2$.

Consider any two permutations $\sigma,\sigma'\in  S_n$ that are adjacent in the graph $G$. Assume that $\sigma'=(i,i+1)\sigma$, where $i\in\{1,\cdots,n-1\}$. Suppose that $\sigma(k_1)=i$ and $\sigma(k_2)=i+1$. Thus $\sigma'(k_1)=i+1$ and $\sigma'(k_2)=i$.
In the following, we construct a coupling $(X,X')$ of the distributions $K_{\beta,\sigma}^1$ and $K_{\beta,\sigma'}^1$.

Consider the first step of the hit and run algorithm. For each $j\in [n]$, let $u_j$ be independently sampled from the uniform distribution on $[0,e^{-2\beta (\sigma(j)-j)_{+}}]$, and let $b_j=j-\frac{\log(u_j)}{2\beta}$. We further let $B_j=\min\{\lfloor b_j\rfloor, n\}$, and let $B_j'$ be the corresponding quantity for $\sigma'$. It suffices to construct the coupling for $\{B_j\}_{j=1}^n$ and $\{B_j'\}_{j=1}^n$.

Note that for any $j\in [n]$, if $\sigma(j)\leq j$, then for any $k\in\{j,\cdots, n-1\}$,
\begin{equation}
    \mathbb{P}(B_j=k)=(1-e^{-2\beta})e^{-2\beta(k-j)},
\end{equation}
\begin{equation}
    \mathbb{P}(B_j=n)=e^{-2\beta (n-j)}.
\end{equation}
If $\sigma(j)\geq j+1$, then for any $k\in\{\sigma(j),\cdots, n-1\}$,
\begin{equation}
    \mathbb{P}(B_j=k)=(1-e^{-2\beta})e^{-2\beta(k-\sigma(j))},
\end{equation}
\begin{equation}
    \mathbb{P}(B_j=n)=e^{-2\beta (n-\sigma(j))}.
\end{equation}
For any $j\in [n]$ with $j\neq k_1,k_2$, we let $B_j'=B_j$.

For $j=k_1$, we construct the coupling between $B_{k_1}$ and $B_{k_1}'$ as follows. There are two possible cases: (a) $i\leq i+1\leq k_1$; (b) $k_1\leq i\leq i+1$. For case (a), we can couple $B_{k_1}$ and $B_{k_1}'$ such that $B_{k_1}=B'_{k_1}$. For case (b), for every $k\in\{i+1,\cdots,n-1\}$, with probability $(1-e^{-2\beta})e^{-2\beta (k-i)}$, we let $B_{k_1}=B_{k_1}'=k$. With probability $e^{-2\beta(n-i)}$, we let $B_{k_1}=B_{k_1}'=n$. For every $k\in\{i+1,\cdots,n-1\}$, with probability $(1-e^{-2\beta})^2 e^{-2\beta(k-i-1)}$, we let $B_{k_1}=i$ and $B'_{k_1}=k$. With probability $(1-e^{-2\beta})e^{-2\beta (n-i-1)}$, we take $B_{k_1}=i$ and $B'_{k_1}=n$.

For $j=k_2$, we construct the coupling between $B_{k_2}$ and $B_{k_2}'$ similarly as follows. There are two possible cases: (a) $i\leq i+1\leq k_2$; (b) $k_2\leq i\leq i+1$. For case (a), we can couple $B_{k_2}$ and $B_{k_2}'$ such that $B_{k_2}=B_{k_2}'$. For case (b), for every $k\in\{i+1,\cdots,n-1\}$, with probability $(1-e^{-2\beta})e^{-2\beta (k-i)}$, we let $B_{k_2}=B_{k_2}'=k$. With probability $e^{-2\beta(n-i)}$, we let $B_{k_2}=B_{k_2}'=n$. For every $k\in\{i+1,\cdots,n-1\}$, with probability $(1-e^{-2\beta})^2 e^{-2\beta(k-i-1)}$, we let $B_{k_2}'=i$ and $B_{k_2}=k$. With probability $(1-e^{-2\beta})e^{-2\beta (n-i-1)}$, we take $B_{k_2}'=i$ and $B_{k_2}=n$.

It can be checked that this gives a valid coupling between $\{B_j\}_{j=1}^n$ and $\{B_j'\}_{j=1}^n$. Also
\begin{equation}
    \mathbb{P}(B_{k_1}\neq B'_{k_1})=1-e^{-2\beta}, \quad \mathbb{P}(B_{k_2}\neq B'_{k_2})=1-e^{-2\beta}.
\end{equation}
By the union bound,
\begin{equation}
    \mathbb{P}(B_j\neq B_j'\text{ for some } j\in [n])\leq 2(1-e^{-2\beta})\leq 4\beta.
\end{equation}

Now we consider the second part of the hit and run algorithm. Let
\begin{equation*}
    \mathcal{S}=\{\tau\in S_n:\tau(j)\leq B_j\text{ for every }j\in[n]\},
\end{equation*}
\begin{equation*}
    \mathcal{S}'=\{\tau\in S_n:\tau(j)\leq B_j'\text{ for every }j\in [n]\}.
\end{equation*}
Note that 
\begin{equation*}
    \mathcal{S}=\{\tau\in S_n:\tau(j)\leq b_j\text{ for every }j\in [n]\},
\end{equation*}
\begin{equation*}
    \mathcal{S}'=\{\tau\in S_n:\tau(j)\leq b_j'\text{ for every }j\in [n]\}.
\end{equation*}

If $B_j=B_j'$ for every $j\in [n]$, then $\mathcal{S}=\mathcal{S}'$. We let $X=X'$ be the uniform distribution on $\mathcal{S}$. In this case we have
\begin{equation}
    \rho(X,X')=0.
\end{equation}

Otherwise, either $B_{k_1}'\neq B_{k_1}$ or $B_{k_2}'\neq B_{k_2}$. By Lemma \ref{L2}, similar to the proof of Theorem \ref{rho}, we can deduce that there exists a coupling $(X,X')$ of the uniform distributions on $\mathcal{S}$ and $\mathcal{S}'$, such that
\begin{equation}
    \rho(X,X')\leq 4n.
\end{equation}

It can be checked that $(X,X')$ gives a valid coupling of $K_{\beta,\sigma}^1$ and $K_{\beta,\sigma'}^1$. Moreover,
\begin{equation}
    \mathbb{E}[\rho(X,X')]\leq 4n \mathbb{P}( B_j \neq B_j'  \text{ for some }j\in [n])\leq 16\beta n.
\end{equation}
Therefore by Theorem \ref{Path}, we have that for any $l\geq 0$,
\begin{equation}
    \max_{\sigma_0\in S_n}\|K_{\beta,\sigma_0}^l-\mathbb{P}_{\beta}\|  \leq 2n^2 (16 \beta n)^l.
\end{equation}

\end{proof}

\subsection{Mixing time lower bounds for the Metropolis algorithms}\label{Sect.3.4}

In this subsection, we compare the mixing times of the hit and run algorithms and the Metropolis algorithms for sampling from Mallows permutation models with $L^1$ and $L^2$ distances. 

We consider the following Metropolis algorithm for sampling from $\tilde{\mathbb{P}}_{\beta}$. Each step of the Metropolis algorithm proceeds as follows. Suppose we start at $\sigma$. Independently pick two numbers $i,j\in [n]$ uniformly at random; if $i=j$, stay at $\sigma$; if $i\neq j$, move to $\sigma (i,j)$ with probability 
\begin{equation*}
    \min\Big\{1,\exp\Big(-2\beta(i-j)(\sigma(i)-\sigma(j))\Big)\Big\},
\end{equation*}
and stay at $\sigma$ with probability
\begin{equation*}
   1- \min\Big\{1,\exp\Big(-2\beta(i-j)(\sigma(i)-\sigma(j))\Big)\Big\}.
\end{equation*}
It can be checked that the stationary distribution of the algorithm is $\tilde{\mathbb{P}}_{\beta}$. We denote by $\tilde{M}^l_{\beta,\sigma_0}$ the distribution of the chain after $l$ steps from the starting state $\sigma_0$.

Theorem \ref{Metropolis_1} below gives an order $n\log{n}$ mixing time lower bound for the Metropolis algorithm started at the identity permutation. It is proved by keeping track of the number of fixed points. Comparing Theorem \ref{rho} and Theorem \ref{Metropolis_1}, we see that the hit and run algorithm mixes much faster than the Metropolis algorithm.

\begin{theorem}\label{Metropolis_1}
Suppose that $\beta=\theta\slash n^2$ with $\theta>0$ independent of $n$. There exist positive constants $C(\theta),N(\theta)$ that only depend on $\theta$, such that if $n\geq N(\theta)$, then the $1\slash 4$-mixing time of the Metropolis algorithm for sampling from $\tilde{\mathbb{P}}_{\beta}$ satisfies
\begin{equation}
    t_{mix}(\frac{1}{4})\geq C(\theta) n\log{n}.
\end{equation}
\end{theorem}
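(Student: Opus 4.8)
The plan is to exhibit a single real-valued statistic whose distribution under the chain started at the identity stays far from its stationary distribution until time of order $n\log n$. Following the strategy indicated after the statement, the natural choice is the number of fixed points $f(\sigma)=\#\{i\in[n]:\sigma(i)=i\}$. Started at $Id$ we have $f=n$, whereas by the permutation limit results of Mukherjee \cite{M1,M2} the number of fixed points under $\tilde{\mathbb{P}}_\beta$ with $\beta=\theta/n^2$ is of order $1$; so $f$ separates the two regimes. Writing $Y_t=f(\sigma_t)$ for the value along the Metropolis chain started at $Id$, it suffices to show that for some $\epsilon=\epsilon(\theta)>0$ and all $t\le \epsilon n\log n$, $Y_t$ exceeds a suitable threshold with probability close to $1$, while that threshold is rarely exceeded under $\tilde{\mathbb{P}}_\beta$.

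The key device for the chain side is a pathwise lower bound that decouples $Y_t$ from the position-dependent acceptance probabilities, which would otherwise make a direct drift/variance analysis of $Y_t$ delicate. At each step the algorithm first selects an ordered pair $(i_s,j_s)$ uniformly from $[n]^2$; a position never selected among the first $t$ steps is never touched by any proposed transposition, hence remains fixed. Thus, letting $T_t:=\#\{m\in[n]: m\notin\{i_s,j_s\}\text{ for all }s\le t\}$, one has the deterministic bound $Y_t\ge T_t$. Crucially, $T_t$ is a function of the i.i.d. selection variables alone, independent of the acceptance coins and of the current state. A first-moment computation gives $\mathbb{E}[T_t]=n\big((n-1)/n\big)^{2t}$, which for $t\le\epsilon n\log n$ is at least $\tfrac12 n^{1-2\epsilon}$ once $n$ is large. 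The indicators $\mathbf{1}\{m\text{ never selected}\}$ are negatively correlated (a balls-in-bins/occupancy computation: $\mathbb{P}(m,m'\text{ both never selected})<\mathbb{P}(m\text{ never})\mathbb{P}(m'\text{ never})$), so $\mathrm{Var}(T_t)\le\mathbb{E}[T_t]$, and Chebyshev's inequality yields $T_t\ge \tfrac12\mathbb{E}[T_t]$ with probability $1-o(1)$. Hence $Y_t\ge \tfrac14 n^{1-2\epsilon}\to\infty$ with probability tending to $1$.

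For the stationary side, \cite{M1,M2} give $\tilde{\mathbb{E}}_\beta[f]\le M(\theta)$ for a constant depending only on $\theta$, so Markov's inequality gives $\tilde{\mathbb{P}}_\beta(f\ge n^{1/2})\le M(\theta)/n^{1/2}\to 0$. Fixing $\epsilon<1/4$ (say $\epsilon=1/8$) and the threshold $m=n^{1/2}$, the event $A=\{f\ge n^{1/2}\}$ satisfies $\tilde M^t_{\beta,Id}(A)=\mathbb{P}(Y_t\ge n^{1/2})\ge \mathbb{P}(T_t\ge n^{1/2})=1-o(1)$ for $t\le \epsilon n\log n$, while $\tilde{\mathbb{P}}_\beta(A)=o(1)$. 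Therefore $\|\tilde M^t_{\beta,Id}-\tilde{\mathbb{P}}_\beta\|\ge \tilde M^t_{\beta,Id}(A)-\tilde{\mathbb{P}}_\beta(A)\ge 1-o(1)$, which exceeds $1/4$ for all $n\ge N(\theta)$. This forces $t_{mix}(\tfrac14)>\epsilon n\log n$, giving the claim with $C(\theta)=\epsilon$.

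The genuinely delicate point is the choice of statistic together with the pathwise domination $Y_t\ge T_t$: this is what lets us sidestep a direct analysis of $Y_t$ along the chain, whose increments depend on the acceptance probabilities $\min\{1,\exp(-2\beta(i-j)(\sigma(i)-\sigma(j)))\}$ and hence on the full current configuration. Without this reduction one would need a variance recursion for $Y_t$ controlling the covariances between fixed-point indicators under the chain, which is considerably more involved; the domination replaces it by a transparent occupancy estimate that is independent of the acceptance rule. The remaining input, the stationary first-moment bound $\tilde{\mathbb{E}}_\beta[f]=O_\theta(1)$, is imported from \cite{M1,M2}; a self-contained version would instead bound $\tilde{\mathbb{P}}_\beta(\sigma(i)=i)$ directly, which is the step most sensitive to the otherwise intractable normalizing constant $\tilde Z_\beta$.
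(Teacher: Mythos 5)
Your proposal is correct and takes essentially the same approach as the paper: both use the number of fixed points as the distinguishing statistic, exploit the fact that any position never selected among the proposal pairs remains fixed regardless of the acceptance coins, control the untouched positions by a Chebyshev argument, and import the stationary-side bound on fixed points from Mukherjee's results for the $L^2$ model. The only differences are bookkeeping: the paper runs a coupon-collector argument on the hitting time $\tau_{n-C_1(\theta)}$ (sums of geometric variables, constant threshold $C_1(\theta)$, tail bound from \cite{M2}), whereas you bound the occupancy count $T_t$ at a fixed time via negative correlation and use a growing threshold $n^{1/2}$ with a first-moment stationary bound --- dual formulations of the same estimate.
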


\begin{proof}
For any $\sigma\in S_n$, let $F(\sigma)=\sum\limits_{i=1}^n \mathbbm{1}_{\sigma(i)=i}$ be the number of fixed points of $\sigma$. By \cite[Corollary 1.12]{M2}, there exists a constant $C_1(\theta)\in\mathbb{N}^{*}$ that only depends on $\theta$, such that
\begin{equation}\label{eqnnn2}
    \tilde{\mathbb{P}}_{\beta}(F(\sigma)\geq C_1(\theta))\leq \frac{1}{8}.
\end{equation}

For any $l\in\mathbb{N}_{+}$, let $I_{2l-1},I_{2l}$ be the two numbers picked at the $l$th step. Then for any $l\in\mathbb{N}_{+}$, $I_1,\cdots,I_{2l}$ are i.i.d. uniform random variables on $[n]$. Let $\mathcal{R}_t$ be the set formed by $I_1,\cdots,I_{t}$ for any $t\in\mathbb{N}_{+}$, and let $\sigma_l$ be the state of the Metropolis algorithm at step $l\in\mathbb{N}_{+}$ started at the identity permutation $Id$. Then
\begin{equation*}
    \{|\mathcal{R}_{2l}|\leq n-C_1(\theta)\} \subseteq \{F(\sigma_l)\geq C_1(\theta) \}.
\end{equation*}
Hence
\begin{equation}\label{eqnnn1}
    \mathbb{P}(F(\sigma_l)\geq C_1(\theta))\geq \mathbb{P}(|\mathcal{R}_{2l}|\leq n-C_1(\theta)).
\end{equation}

Consider $n\geq C_1(\theta)+1$. For every $k\in [n]$, let $\tau_k=\inf\{l:|\mathcal{R}_l|=k\}$. We also set $\tau_0=0$, and let $X_k=\tau_k-\tau_{k-1}$ for any $k\in [n]$. Note that $X_1,\cdots,X_n$ are independent random variables with $X_k$ following the geometric distribution with success probability $q_k=1-\frac{k-1}{n}$. Hence $\mathbb{E}[X_k]=q_k^{-1}$ and $Var(X_k)\leq q_k^{-2}$. Thus we have
\begin{equation*}
    \mathbb{E}[\tau_{n-C_1(\theta)}]=\sum_{k=1}^{n-C_1(\theta)}\mathbb{E}[X_k]=n\sum_{k=C_1(\theta)+1}^{n} k^{-1}\geq n\log(\frac{n}{C_1(\theta)+1}),
\end{equation*}
\begin{equation*}
    Var(\tau_{n-C_1(\theta)})=\sum_{k=1}^{n-C_1(\theta)}Var(X_k)\leq n^2\sum_{k=1}^{\infty}k^{-2}\leq 2n^2.
\end{equation*}
Hence by Chebyshev's inequality,
\begin{eqnarray*}
  &&  \mathbb{P}(\tau_{n-C_1(\theta)}\leq n(\log(\frac{n}{C_1(\theta)+1})-4))\\
  &\leq& \mathbb{P}(|\tau_{n-C_1(\theta)}-\mathbb{E}[\tau_{n-C_1(\theta)}]|\geq 4n)
     \leq\frac{Var(\tau_{n-C_1(\theta)})}{16 n^2}\leq \frac{1}{8}.
\end{eqnarray*}
Note that for any $l\in\mathbb{N}_{+}$, $\{|\mathcal{R}_{2l}|>n-C_1(\theta)\}\subseteq \{\tau_{n-C_1(\theta)}\leq 2l\}$. Hence by (\ref{eqnnn1}), for any $l\in\mathbb{N}_{+}$ with $l\leq \frac{1}{2}n(\log(\frac{n}{C_1(\theta)+1})-4)$, 
\begin{equation}\label{eqnnn3}
    \tilde{M}^l_{\beta,Id}(F(\sigma)\geq C_1(\theta))=\mathbb{P}(F(\sigma_l)\geq C_1(\theta))\geq \frac{7}{8}.
\end{equation}
Combining (\ref{eqnnn2}) and (\ref{eqnnn3}) gives $\|\tilde{M}^l_{\beta,Id}-\tilde{P}_{\beta}\|> \frac{1}{4}$ for any $l\in\mathbb{N}_{+}$ with $l\leq \frac{1}{2}n(\log(\frac{n}{C_1(\theta)+1})-4)$. Hence there exist positive constants $C(\theta),N(\theta)$ that only depend on $\theta$, such that if $n\geq N(\theta)$, then $t_{mix}(\frac{1}{4})\geq C(\theta)n\log{n}$.
\end{proof}

Similarly, we can consider the Metropolis algorithm for sampling from $\mathbb{P}_{\beta}$. Each step of the Metropolis algorithm is given as follows. Suppose we start at $\sigma$. Independently pick two numbers $i,j\in [n]$ uniformly at random; if $i=j$, stay at $\sigma$; if $i\neq j$, move to $\sigma (i,j)$ with probability
\begin{equation*}
    \min\Big\{1,\exp\Big(-\beta(|\sigma(i)-j|+|\sigma(j)-i|-|\sigma(i)-i|-|\sigma(j)-j|)\Big)\Big\},
\end{equation*} 
otherwise stay at $\sigma$. The following parallel result can be proved using the same argument as in the proof of Theorem \ref{Metropolis_1}. Comparing Theorem \ref{footru} and Theorem \ref{Metropolis_2}, we conclude that the hit and run algorithm mixes much faster than the Metropolis algorithm.

\begin{theorem}\label{Metropolis_2}
Suppose that $\beta=\theta\slash n$ with $\theta>0$ independent of $n$. There exist positive constants $C(\theta),N(\theta)$ that only depend on $\theta$, such that if $n\geq N(\theta)$, then the $1\slash 4$-mixing time of the Metropolis algorithm for sampling from $\mathbb{P}_{\beta}$ satisfies
\begin{equation}
    t_{mix}(\frac{1}{4})\geq C(\theta) n\log{n}
\end{equation}
\end{theorem}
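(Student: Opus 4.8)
The plan is to mirror the proof of Theorem \ref{Metropolis_1} almost verbatim, using the number of fixed points $F(\sigma)=\sum_{i=1}^n \mathbbm{1}_{\sigma(i)=i}$ as the distinguishing statistic. Two ingredients are needed: a concentration statement for $F$ under the stationary distribution $\mathbb{P}_{\beta}$, and a coupon-collector lower bound showing that the $L^1$ Metropolis chain started at $Id$ retains many fixed points for a long time. Throughout I denote by $M^l_{\beta,Id}$ the distribution of the $L^1$ Metropolis chain after $l$ steps from the identity.

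First I would invoke the $L^1$ analog of \cite[Corollary 1.12]{M2} for $\beta=\theta\slash n$: since the number of fixed points of a sample from $\mathbb{P}_{\beta}$ converges to a Poisson distribution whose parameter depends only on $\theta$ (as recorded in the remark following Theorem \ref{footru}, via \cite{M1,M2}), there is a constant $C_1(\theta)\in\mathbb{N}_{+}$ with
\begin{equation*}
    \mathbb{P}_{\beta}(F(\sigma)\geq C_1(\theta))\leq \frac{1}{8}.
\end{equation*}

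Second, I would observe that the crucial locality property used in the proof of Theorem \ref{Metropolis_1} is insensitive to the precise acceptance probability: at each step the $L^1$ Metropolis chain selects two positions $I_{2l-1},I_{2l}$ and only possibly alters the values at these two positions. Hence, starting from $Id$, any position not yet selected remains a fixed point, so with $\mathcal{R}_t$ the set of positions appearing among $I_1,\dots,I_t$ and $\sigma_l$ the state at step $l$ we again have
\begin{equation*}
    \{|\mathcal{R}_{2l}|\leq n-C_1(\theta)\}\subseteq \{F(\sigma_l)\geq C_1(\theta)\}.
\end{equation*}
The coupon-collector computation is then literally identical to the $L^2$ case, because it depends only on the uniform selection of positions and not on the model: writing $\tau_k=\inf\{l:|\mathcal{R}_l|=k\}$ as a sum of independent geometric variables, one bounds $\mathbb{E}[\tau_{n-C_1(\theta)}]\geq n\log(n\slash(C_1(\theta)+1))$ and $\mathrm{Var}(\tau_{n-C_1(\theta)})\leq 2n^2$, and applies Chebyshev's inequality to conclude that $M^l_{\beta,Id}(F(\sigma)\geq C_1(\theta))\geq 7\slash 8$ for all $l\leq \tfrac{1}{2} n(\log(n\slash(C_1(\theta)+1))-4)$.

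Combining the two bounds yields $\|M^l_{\beta,Id}-\mathbb{P}_{\beta}\|>1\slash 4$ for every such $l$, and therefore $t_{mix}(1\slash 4)\geq C(\theta)n\log{n}$ once $n$ is large enough. The only step requiring genuine attention is the first one, namely the fixed-point concentration under $\mathbb{P}_{\beta}$; this is supplied directly by the Poisson limit theorem of \cite{M2} in the $L^1$ regime $\beta\sim c\slash n$. Everything else transfers mechanically from the proof of Theorem \ref{Metropolis_1}, which is precisely why the statement can be asserted with the remark that it "can be proved using the same argument."
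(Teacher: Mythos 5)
Your proposal is correct and follows exactly the route the paper intends: the paper gives no separate proof of Theorem \ref{Metropolis_2}, stating only that it ``can be proved using the same argument as in the proof of Theorem \ref{Metropolis_1}'', and your write-up fleshes out precisely that argument --- the fixed-point concentration under $\mathbb{P}_{\beta}$ from the $L^1$ Poisson limit theorem of \cite{M2}, the locality of the Metropolis moves, and the coupon-collector/Chebyshev bound. You also correctly identify the only model-dependent ingredient (the stationary bound on $F$), which is indeed supplied by \cite{M2} in the regime $\beta\sim\theta\slash n$.
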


\section{Extension to a two-parameter permutation model}\label{Sect.4}

As explained in the Introduction, there is only one distance metric that is involved in Mallows permutation model. It is natural to consider more general permutation models where multiple distance metrics can be simultaneously incorporated. In Mallows' original paper on Mallows permutation model (\cite{Mal}), a two-parameter model involving a weighted sum of the $L^2$ distance and Kendall's $\tau$ was considered. 

In this section, we consider a new two-parameter permutation model based on the $L^1$ distance and Cayley distance. We introduce an extension of the hit and run algorithm for sampling from this model. The algorithm is a twisted version of the algorithm given in Section \ref{Sect.2.2}. We analyze the mixing time of the twisted algorithm in Section \ref{Sect.4.2}. 

\subsection{The two-parameter permutation model and the twisted algorithm}\label{Sect.4.1}

The two-parameter permutation model we consider here is the following non-uniform probability measure on the symmetric group $S_n$:
\begin{equation}\label{T2}
    \mathbb{P}_{\beta_1,\beta_2}(\sigma)\propto e^{-\beta_1 H(\sigma,Id)+\beta_2 C(\sigma)},
\end{equation}
where $\beta_1,\beta_2$ are parameters and $\sigma\in S_n$. Here, we recall that $H(\sigma,\tau)$ is the $L^1$ distance as defined in (\ref{Dis}), and $C(\sigma)$ is the number of cycles of $\sigma$ for any $\sigma\in S_n$. Note that Cayley distance between $\sigma$ and $Id$ can be expressed as $n-C(\sigma)$, so equivalently speaking the above model involves the $L^1$ distance and Cayley distance. The model can be viewed as ``twisting'' Mallows permutation model with $L^1$ distance by the factor $e^{\beta_2 C(\sigma)}$. We note that the idea of twisting via Cayley distance also appeared in T\'{o}th's work \cite{Toth} that relates the spin $1\slash 2$ Heisenberg ferromagnet to the interchange process. 

In this subsection, we introduce an extension of the hit and run algorithm for sampling from $\mathbb{P}_{\beta_1,\beta_2}$ when $\beta_1>0$. The algorithm involves ``twisting'' the second part of the algorithm for sampling from $\mathbb{P}_{\beta_1}$ in Section \ref{Sect.2.2}. The twisted algorithm is a Markov chain on $S_n$, each step of which consists of two sequential parts as given below. Note that when $\beta_2=0$, the algorithm specializes to the one given in Section \ref{Sect.2.2}.

\begin{itemize}
    \item Starting from $\sigma$, for each $i\in [n]$, independently sample $u_i$ from the uniform distribution on $[0, e^{-2\beta_1 (\sigma(i)-i)_{+} }]$. Let $b_i=i-\frac{\log(u_i)}{2\beta_1}$, and $\mathbf{b}=(b_1,\cdots,b_n)$.
    \item Sample $\sigma'\in S_n$ from the following probability distribution on $S_n$:
    \begin{equation*}
        \mathbb{Q}_{\mathbf{b},\beta_2}(\tau)\propto e^{\beta_2 C(\tau)}1_{\tau(i)\leq b_i\text{ for every }i\in [n]}, \text{ for every }\tau\in S_n. 
    \end{equation*}
    Then move to the new state $\sigma'$.
\end{itemize}

Now we explain how to sample from $\mathbb{Q}_{\mathbf{b},\beta_2}$ when $b_i\geq i$ for every $i\in [n]$. Note that this assumption is satisfied when $\mathbf{b}$ is generated by the first step of the twisted algorithm. Let $N_i=\#\{j\in [n]: b_j\geq i\}-n+i$ for every $i\in [n]$.

In the procedure below for sampling $\tau$ from $\mathbb{Q}_{\mathbf{b},\beta_2}$, we sequentially determine $\tau^{-1}(l)$ for $l=n,n-1,\cdots,1$. For any given $l\in [n]$, suppose $\tau^{-1}(n),\cdots,\tau^{-1}(l)$ have been determined. We call a sequence $(a_1,\cdots,a_s)$ with $s\geq 2$ an ``open arc'' if we have $a_1,\cdots,a_{s-1}\in\{l,\cdots,n\}$, $a_s\in\{1,\cdots,l-1\}$,  $\tau^{-1}(a_w)=a_{w+1}$ for every $w\in\{1,\cdots, s-1\}$, and that there is no $q\in\{l,\cdots,n\}$ such that $\tau^{-1}(q)=a_1$. We call $a_1$ the ``head'' of the open arc, and $a_s$ the ``tail'' of the open arc. We also call $(a_1)$ an open arc if $a_1\in\{1,\cdots,l-1\}$ and there is no $q\in\{l,\cdots,n\}$ such that $\tau^{-1}(q)=a_1$. In this case, $a_1$ is both the head and the tai of this open arc. Intuitively, an open arc is an incomplete cycle formed in the middle of the sampling procedure. 

Now we describe the procedure for sampling $\tau$ from $\mathbb{Q}_{\mathbf{b},\beta_2}$. First note that by our assumption, $b_n\geq n$, hence $N_n\geq 1$. We look at the $N_n$ places of $i$ where $b_i\geq n$, place the symbol $n$ at the place $n$ with probability $\frac{e^{\beta_2}}{e^{\beta_2}+N_n-1}$, and place the symbol $n$ at each of the remaining $N_n-1$ places with probability $\frac{1}{e^{\beta_2}+N_n-1}$. 

Then we look at the remaining $N_{n-1}$ places where $b_i\geq n-1$ (with the place where $n$ was placed deleted). Suppose that currently the open arc containing $n-1$ has $a$ as its head. Note that $b_a\geq a\geq n-1$. We place the symbol $n-1$ at the place $a$ with probability $\frac{e^{\beta_2}}{e^{\beta_2}+N_{n-1}-1}$, and place the symbol $n-1$ at each of the remaining $N_{n-1}-1$ places with probability $\frac{1}{e^{\beta_2}+N_{n-1}-1}$. 

The remaining steps can be similarly defined as above.

By the above procedure, it can be checked that the probability of obtaining $\tau\in S_n$ is
\begin{equation}
    \frac{e^{\beta_2 C(\tau)}}{\prod_{l=1}^n(e^{\beta_2}+N_{l}-1)}1_{\tau(i)\leq b_i\text{ for every } i\in [n] } \propto  e^{\beta_2 C(\tau)} 1_{\tau(i)\leq b_i\text{ for every } i\in [n] }.
\end{equation}
Thus the above sampling procedure indeed generates $\tau$ from $\mathbb{Q}_{\mathbf{b},\beta}$.

The following proposition validates the correctness of the twisted algorithm.

\begin{proposition}
We denote by $K_{\beta_1,\beta_2}$ the transition matrix of the twisted algorithm. Then $K_{\beta_1,\beta_2}$ is reversible with respect to $\mathbb{P}_{\beta_1,\beta_2}$.
\end{proposition}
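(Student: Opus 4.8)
The plan is to mirror the reversibility proof for $K_{\beta_1}$ given in Section~\ref{Sect.2.2}, the only new feature being the bookkeeping of the Cayley-twisting factor $e^{\beta_2 C(\cdot)}$. Write the stationary measure as $\mathbb{P}_{\beta_1,\beta_2}(\sigma) = Z_{\beta_1,\beta_2}^{-1} e^{-2\beta_1 \sum_{i=1}^n (\sigma(i)-i)_+ + \beta_2 C(\sigma)}$, and denote by
$$W(\mathbf{b}) := \sum_{\tau \in S_n:\, \tau(i)\leq b_i\,\forall i} e^{\beta_2 C(\tau)}$$
the normalizing constant of $\mathbb{Q}_{\mathbf{b},\beta_2}$, so that $\mathbb{Q}_{\mathbf{b},\beta_2}(\sigma') = e^{\beta_2 C(\sigma')}\, 1_{\sigma'(i)\leq b_i\,\forall i}/W(\mathbf{b})$. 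The crucial structural observation, already implicit in the product formula $W(\mathbf{b}) = \prod_{l=1}^n (e^{\beta_2}+N_l-1)$ derived above, is that $W(\mathbf{b})$ depends only on the auxiliary vector $\mathbf{b}$ and carries no residual dependence on the current or proposed state.

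First I would write the transition kernel explicitly as an integral over the auxiliary variables. Since $u_i$ is uniform on $[0,\, e^{-2\beta_1 (\sigma(i)-i)_+}]$, its density contributes a factor $e^{2\beta_1 \sum_i (\sigma(i)-i)_+}$, so
$$K_{\beta_1,\beta_2}(\sigma,\sigma') = e^{2\beta_1 \sum_{i=1}^n (\sigma(i)-i)_+}\int_{\prod_{i=1}^n [0,\, e^{-2\beta_1 (\sigma(i)-i)_+}]} \frac{e^{\beta_2 C(\sigma')}\, 1_{\sigma'(i)\leq b_i\,\forall i}}{W(\mathbf{b})}\, du_1\cdots du_n,$$
where $b_i = i - \log(u_i)/(2\beta_1)$. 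As in Section~\ref{Sect.2.2}, the bound $u_i\leq 1$ makes $\sigma'(i)\leq b_i$ equivalent to $u_i\leq e^{-2\beta_1 (\sigma'(i)-i)_+}$; intersecting this with the integration domain collapses the region to the symmetric box $B(\beta_1,\sigma,\sigma') := \prod_{i=1}^n [0,\, e^{-2\beta_1 \max\{(\sigma(i)-i)_+,(\sigma'(i)-i)_+\}}]$.

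Next I would multiply by $\mathbb{P}_{\beta_1,\beta_2}(\sigma)$ and simplify. The density factor $e^{2\beta_1\sum_i(\sigma(i)-i)_+}$ cancels against the $e^{-2\beta_1\sum_i(\sigma(i)-i)_+}$ in $\mathbb{P}_{\beta_1,\beta_2}(\sigma)$, while the twisting factor $e^{\beta_2 C(\sigma)}$ from the measure and the factor $e^{\beta_2 C(\sigma')}$ from $\mathbb{Q}_{\mathbf{b},\beta_2}$ merge into the symmetric product $e^{\beta_2(C(\sigma)+C(\sigma'))}$. This gives
$$\mathbb{P}_{\beta_1,\beta_2}(\sigma)\, K_{\beta_1,\beta_2}(\sigma,\sigma') = Z_{\beta_1,\beta_2}^{-1}\, e^{\beta_2(C(\sigma)+C(\sigma'))} \int_{B(\beta_1,\sigma,\sigma')} \frac{1}{W(\mathbf{b})}\, du_1\cdots du_n.$$
The right-hand side is manifestly invariant under the exchange $\sigma\leftrightarrow\sigma'$: the prefactor $e^{\beta_2(C(\sigma)+C(\sigma'))}$ is symmetric, the box $B(\beta_1,\sigma,\sigma')$ is symmetric, and the integrand $1/W(\mathbf{b})$ depends on $\mathbf{b}$ alone. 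Hence $\mathbb{P}_{\beta_1,\beta_2}(\sigma) K_{\beta_1,\beta_2}(\sigma,\sigma') = \mathbb{P}_{\beta_1,\beta_2}(\sigma') K_{\beta_1,\beta_2}(\sigma',\sigma)$, which is exactly detailed balance and yields reversibility.

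The step I expect to be the main obstacle is not any computation but the conceptual point that must be recorded carefully: that the normalizing constant $W(\mathbf{b})$ of the twisted conditional distribution $\mathbb{Q}_{\mathbf{b},\beta_2}$ is a function of the auxiliary data alone, with no dependence on $\sigma$ or $\sigma'$. This is precisely what allows the Cayley factors $e^{\beta_2 C(\sigma)}$ and $e^{\beta_2 C(\sigma')}$ to split off symmetrically; had the twisting disturbed the $\sigma$-independence of the denominator, detailed balance would break. The product formula $W(\mathbf{b}) = \prod_{l=1}^n (e^{\beta_2}+N_l-1)$ established before the proposition confirms this independence, since each $N_l = \#\{j:b_j\geq l\}-n+l$ is determined by $\mathbf{b}$.
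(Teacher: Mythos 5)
Your proof is correct and takes essentially the same route as the paper's: the same integral representation of the kernel over the auxiliary box, the same use of $u_i\leq 1$ to rewrite the one-sided constraints $\sigma'(i)\leq b_i$, the same symmetric region $B(\beta_1,\sigma,\sigma')$, and the same key observation that the denominator of $\mathbb{Q}_{\mathbf{b},\beta_2}$ depends on $\mathbf{b}$ alone, so that $\mathbb{P}_{\beta_1,\beta_2}(\sigma)K_{\beta_1,\beta_2}(\sigma,\sigma')$ becomes manifestly symmetric in $\sigma$ and $\sigma'$. The only cosmetic difference is that you name the normalizing constant $W(\mathbf{b})$ and cite the product formula $\prod_{l=1}^n(e^{\beta_2}+N_l-1)$ as confirmation, whereas the paper simply carries the sum $\sum_{\tau\in S_n}e^{\beta_2 C(\tau)}1_{u_i\leq e^{-2\beta_1(\tau(i)-i)_{+}},\forall i\in[n]}$ through the computation, its independence of $\sigma,\sigma'$ being evident from the definition.
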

\begin{proof}
We note that
\begin{equation}
    \mathbb{P}_{\beta_1,\beta_2}(\sigma)=Z_{\beta_1,\beta_2}^{-1}e^{-2\beta_1\sum_{i=1}^n(\sigma(i)-i)_{+}+\beta_2 C(\sigma)},
\end{equation}
where $\sigma\in S_n$ and $Z_{\beta_1,\beta_2}$ is the normalizing constant.

First we note that $u_i\leq 1$ for every $i\in [n]$. In this case, $\tau(i)\leq b_i$ is equivalent to $u_i\leq e^{-2\beta_1 (\tau(i)-i)_{+}}$ for any $\tau  \in S_n$.
Note that for any $\sigma,\sigma'\in S_n$, we have
\begin{eqnarray*}
&&K_{\beta_1,\beta_2}(\sigma,\sigma')\\
&=&\int_{\prod_{i=1}^n[0,e^{-2\beta_1 (\sigma(i)-i)_{+}}]} \frac{e^{\beta_2 C(\sigma')} 1_{\sigma'(i)\leq b_i, \forall  i\in [n]}}{\sum_{\tau\in S_n} e^{\beta_2 C(\tau)} 1_{\tau(i)\leq b_i, \forall i\in [n]}}d u_1 \cdots d u_n\\
&\times& e^{2\beta_1 \sum_{i=1}^n (\sigma(i)-i)_{+}}\\
&=& \int_{\prod_{i=1}^n[0,e^{-2\beta_1 (\sigma(i)-i)_{+}}]}  \frac{e^{\beta_2 C(\sigma')}1_{u_i\leq e^{-2\beta_1 (\sigma'(i)-i)_{+}}, \forall i\in [n]}}{\sum_{\tau\in S_n} e^{\beta_2 C(\tau)} 1_{u_i\leq e^{-2\beta_1 (\tau(i)-i)_{+}}, \forall i\in [n]}}d u_1 \cdots d u_n\\
&\times&   e^{2\beta_1 \sum_{i=1}^n (\sigma(i)-i)_{+}}\\
&=& \int_{B(\sigma,\sigma')} \frac{e^{\beta_2 C(\sigma')}}{\sum_{\tau\in S_n} e^{\beta_2 C(\tau)} 1_{u_i\leq e^{-2\beta_1 (\tau(i)-i)_{+}}, \forall i\in [n]}} d u_1 \cdots d u_n\\
&\times& e^{2\beta_1 \sum_{i=1}^n (\sigma(i)-i)_{+}},
\end{eqnarray*}
where $B(\sigma,\sigma')=\prod_{i=1}^n[0,e^{-2\beta_1 \max\{(\sigma(i)-i)_{+},(\sigma'(i)-i)_{+}\}}]$

Hence
\begin{eqnarray*}
&&\mathbb{P}_{\beta_1,\beta_2}(\sigma)K_{\beta_1,\beta_2}(\sigma,\sigma')\\
&=& Z_{\beta_1,\beta_2}^{-1}\int_{B(\sigma,\sigma')} \frac{e^{\beta_2(C(\sigma)+C(\sigma'))}}{\sum_{\tau\in S_n} e^{\beta_2 C(\tau)} 1_{u_i\leq e^{-2\beta_1 (\tau(i)-i)_{+}}, \forall i\in [n]}} d u_1 \cdots d u_n.
\end{eqnarray*}
Similarly,
\begin{eqnarray*}
&&\mathbb{P}_{\beta_1,\beta_2}(\sigma')K_{\beta_1,\beta_2}(\sigma',\sigma)\\
&=& Z_{\beta_1,\beta_2}^{-1}\int_{B(\sigma,\sigma')} \frac{e^{\beta_2(C(\sigma)+C(\sigma'))}}{\sum_{\tau\in S_n} e^{\beta_2 C(\tau)} 1_{u_i\leq e^{-2\beta_1 (\tau(i)-i)_{+}}, \forall i\in [n]}} d u_1 \cdots d u_n.
\end{eqnarray*}
Therefore
\begin{equation}
    \mathbb{P}_{\beta_1,\beta_2}(\sigma)K_{\beta_1,\beta_2}(\sigma,\sigma')=\mathbb{P}_{\beta_1,\beta_2}(\sigma')K_{\beta_1,\beta_2}(\sigma',\sigma).
\end{equation}
Hence $K_{\beta_1,\beta_2}$ is reversible with respect to $\mathbb{P}_{\beta_1,\beta_2}$.

\end{proof}

We display here some visualization of the two-parameter permutation model (\ref{T2}) based on the twisted algorithm. Figure \ref{figd} shows histograms of the number of fixed points and the length of the cycle containing $500$ when $n=1000$ for $\beta_1=\frac{1}{n}$ and various choices of the parameter $\beta_2$ for the two-parameter permutation model. We also include the corresponding histograms for the $L^1$ model with $\beta=\frac{1}{n}$ (which corresponds to the two-parameter permutation model with $\beta_1=\frac{1}{n}$ and $\beta_2=0$). We observe that when $\beta_1=\frac{1}{n}$ and $\beta_2\in\{0.5,1\}$, the number of fixed points is approximately Poisson with parameter varying with $\beta_2$ and greater than that of the $L^1$ model with $\beta=\frac{1}{n}$. Moreover, we observe that the distribution of the length of the cycle containing $500$ under the two-parameter permutation model can be significantly different from that under the $L^1$ model. We also present some scatter plots of the two-parameter permutation model in Figure \ref{fige}. We observe as we increase $\beta_2\geq 0$ with $\beta_1=\frac{1}{n}$ fixed, the points $\{(i,\sigma(i))\}_{i=1}^n$ for $\sigma$ drawn from the corresponding model are more concentrated around the diagonal $\{(i,i):i\in [n]\}$. When $\beta_2$ is large (say $\beta_2=10$ as in Subfigure (a4) of Figure \ref{fige}), most points $(i,\sigma(i))$ are concentrated around the diagonal. Based on this observation, we focus on the case when $\beta_2\geq 0$ is of constant order for the mixing time analysis in Section \ref{Sect.4.2}.

\begin{figure}[H]
  \centering
  \includegraphics[
  height=\textwidth,
  width=\textwidth
  ]{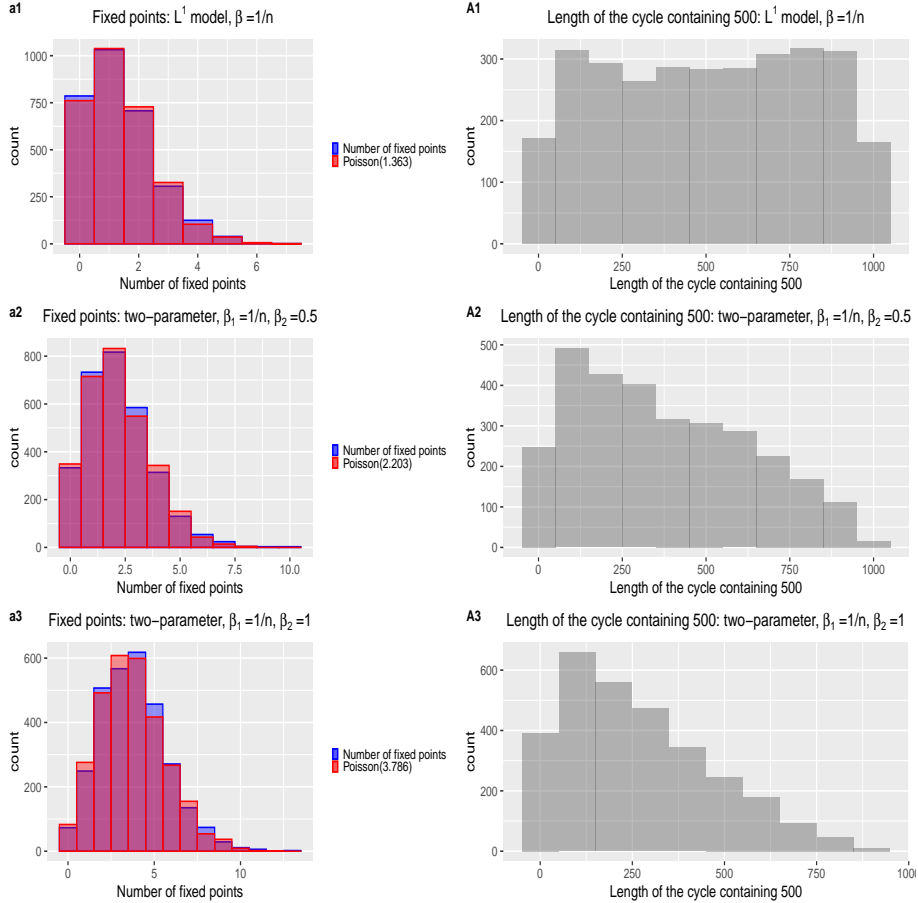}
 \caption{Histograms of the number of fixed points and the length of the cycle containing $500$ of the two-parameter permutation model with $n=1000$ and various choices of the parameter $\beta_2$. The histograms are based on the first $4000$ samples (dropping the first $1000$ samples as burn-in) of the twisted algorithm (introduced in this section) started at the identity permutation. We also present the histograms of $3000$ independent Poisson random variables (with varying parameters) in red for comparison. In Subfigures (a1) and (A1) we also present the corresponding histograms of the $L^1$ model with $\beta=\frac{1}{n}$ for comparison.}
 \label{figd}
\end{figure} 

\begin{figure}[H]
  \centering
  \includegraphics[
  height=\textwidth,
  width=\textwidth
  ]{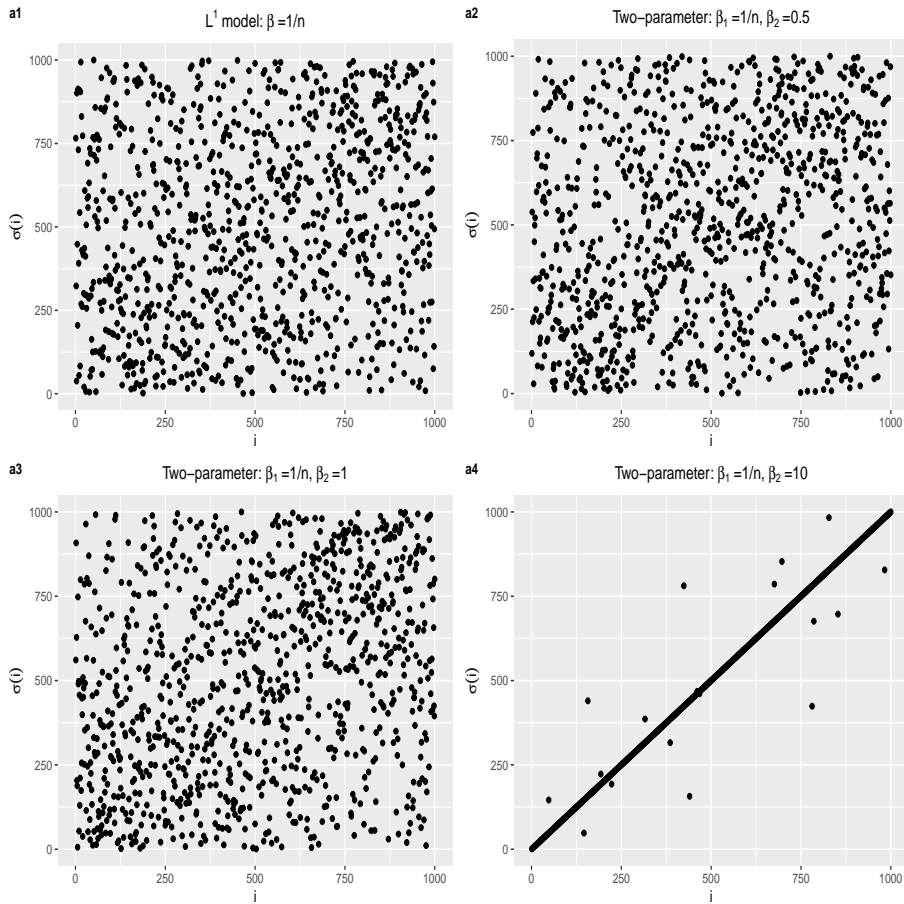}
 \caption{Scatter plots of the two-parameter permutation model with $n=1000$ and various choices of the parameter $\beta_2$. The scatter plots are based on the sample collected at the $4000$th step of the twisted algorithm (introduced in this section) started at the identity permutation. In Subfigure (a1) we also present the scatter plot of the $L^1$ model with $\beta=\frac{1}{n}$ for comparison.}
 \label{fige}
\end{figure} 

At present writing, we cannot see how to adapt the twisted algorithm to Mallows' two-parameter model (\cite{Mal}). 

\subsection{Mixing time analysis of the twisted algorithm}\label{Sect.4.2}

In this subsection, we use path coupling to analyze the mixing time of the twisted algorithm. Recall that the transition matrix of the twisted algorithm is denoted by $K_{\beta_1,\beta_2}$. For any $\sigma_0\in S_n$, we let $K_{\beta_1,\beta_2,\sigma_0}^l$ be the distribution of the chain after $l$ steps from the starting state $\sigma_0$.

The main result is the following theorem which gives a mixing time upper bound of order $\log{n}$.

\begin{theorem}\label{ewens}
There exist absolute constants $c_1^{*},c_2^{*}>0$ and $N\in\mathbb{N}_{+}$ such that the following holds. Suppose that $0<\beta_1\leq \frac{c_1^{*}}{n}$, $0\leq \beta_2\leq c_2^{*}$ and $n\geq N$. Then for any $l\geq 0$,
\begin{equation}
    \max_{\sigma_0\in S_n}\|K_{\beta_1,\beta_2,\sigma_0}^l-\mathbb{P}_{\beta_1,\beta_2}\|\leq n^2(\frac{1}{2})^{l-1}.
\end{equation}
This implies that
\begin{equation}
    t_{mix}(\frac{1}{4})\leq 3+\Big\lceil\frac{2\log{n}}{\log{2}}\Big\rceil.
\end{equation}
\end{theorem}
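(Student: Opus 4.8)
The plan is to run the path coupling machinery of Theorem \ref{Path} with exactly the same graph $G=(\Omega,E)$ and path metric $\rho$ that were used in Theorems \ref{rho} and \ref{footru}: edges join $\sigma$ and $(i,i+1)\sigma$, each of length $1$, so that $\mathrm{diam}(\Omega)\leq 2n^2$. Given adjacent permutations $\sigma$ and $\sigma'=(i,i+1)\sigma$ with $\sigma(k_1)=i$ and $\sigma(k_2)=i+1$, I would construct a coupling $(X,X')$ of the one-step distributions $K_{\beta_1,\beta_2,\sigma}^1$ and $K_{\beta_1,\beta_2,\sigma'}^1$ and show $\mathbb{E}[\rho(X,X')]\leq 1/2$. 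Path coupling then yields $\max_{\sigma_0}\|K_{\beta_1,\beta_2,\sigma_0}^l-\mathbb{P}_{\beta_1,\beta_2}\|\leq (1/2)^l\,\mathrm{diam}(\Omega)\leq n^2(1/2)^{l-1}$, and solving $n^2(1/2)^{l-1}\leq 1/4$ recovers the stated mixing time bound.

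The coupling splits along the two parts of the twisted step. The first part produces the thresholds $b_j=j-\log(u_j)/(2\beta_1)$, equivalently the integer caps $B_j=\min\{\lfloor b_j\rfloor,n\}$ on which the second part actually depends, and it does not involve $\beta_2$ at all. I would therefore reuse verbatim the coupling of $\{B_j\}$ and $\{B_j'\}$ from the proof of Theorem \ref{footru}: it keeps $B_j=B_j'$ except possibly at $j\in\{k_1,k_2\}$, with $\mathbb{P}(B_j\neq B_j'\text{ for some }j)\leq 4\beta_1$. When the thresholds agree everywhere, the two constrained Cayley-twisted measures $\mathbb{Q}_{\mathbf{b},\beta_2}$ and $\mathbb{Q}_{\mathbf{b}',\beta_2}$ coincide, so I couple $X=X'$ and $\rho(X,X')=0$.

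The essential new ingredient is a twisted analog of Lemma \ref{L2}: for two threshold vectors $\mathbf{b},\mathbf{b}'$ that agree except in a single coordinate $k$, I would build a coupling of $\mathbb{Q}_{\mathbf{b},\beta_2}$ and $\mathbb{Q}_{\mathbf{b}',\beta_2}$ with $\rho(X,X')\leq Cn$ for a constant $C$ depending only on the bounded parameter $\beta_2$. The construction runs the two sequential ``place the symbols $n,n-1,\cdots,1$'' samplers of Section \ref{Sect.4.1} simultaneously, maintaining the invariant that the available-place set of one sampler equals that of the other together with a single extra element, exactly as in the uniform coupling of Lemma \ref{L2}. The new feature is that the per-step placement is no longer uniform: the head of the open arc containing the current symbol carries weight $e^{\beta_2}$ while the remaining available places carry weight $1$. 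I would couple the two weighted placement laws by a maximal coupling on the shared available places, tracking simultaneously the propagating extra element and the heads of the open arcs in both chains. Since $0\leq\beta_2\leq c_2^{*}$ the weight ratio $e^{\beta_2}$ is bounded, so the per-step discrepancy probability is controlled and, as in the proof of Lemma \ref{L2}, the positions on which $X$ and $X'$ eventually differ form a single arc resolvable by $O(n)$ adjacent transpositions.

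Finally, because the thresholds differ in at most the two coordinates $k_1,k_2$, I would apply the twisted lemma twice through an intermediate threshold vector and glue the two couplings via the proof of \cite[Lemma 14.3]{LP}, obtaining $\rho(X,X')\leq 2Cn$ on the discrepancy event. Combining with $\mathbb{P}(B_j\neq B_j'\text{ for some }j)\leq 4\beta_1$ gives $\mathbb{E}[\rho(X,X')]\leq 8C\beta_1 n$, which is at most $1/2$ once $c_1^{*}$ is chosen small enough to absorb the constant $C=C(c_2^{*})$. I expect the main obstacle to be precisely the twisted analog of Lemma \ref{L2}: unlike the uniform case, the distinguished weight $e^{\beta_2}$ attaches to the \emph{chain-dependent} head of the current open arc, so once the two samplers' partial permutations diverge their weighted distributions are driven by different distinguished places, and the coupling must keep the open-arc structures close enough that this extra discrepancy stays confined to a single bounded-length arc. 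The hypothesis $n\geq N$ is presumably needed to guarantee that the weighted per-step probabilities are well defined and that the relevant union and gluing bounds survive the extra bounded factor.
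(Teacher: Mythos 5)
Your skeleton---path coupling on the adjacent-transposition graph with $\mathrm{diam}(\Omega)\leq 2n^2$, verbatim reuse of the threshold coupling from Theorem \ref{footru} so that discrepancies occur only at $k_1,k_2$ with probability at most $4\beta_1$, and reduction to a coupling lemma for the twisted measures $\mathbb{Q}_{\mathbf{b},\beta_2}$---matches the paper's proof exactly, and you correctly isolate where the difficulty lies. But there is a genuine gap precisely at that point: you assert a twisted analog of Lemma \ref{L2} with a \emph{deterministic} bound $\rho(X,X')\leq Cn$, $C=C(c_2^{*})$, and your sketch of it does not go through. Once the two samplers' partial permutations diverge, the weight $e^{\beta_2}$ sits on \emph{different} distinguished places (the two chain-dependent open-arc heads), so at each subsequent step $l$ any coupling of the two placement laws must fail with probability of order $(e^{\beta_2}-1)/(e^{\beta_2}+N_l-1)$, where $N_l$ is the number of available places at that step; each such failure creates a fresh discrepancy whose resolution costs up to $2l$ adjacent transpositions. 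For thresholds with small $N_l$ these contributions sum to order $n^2$, not $n$, and nothing in the hypothesis $0\leq\beta_2\leq c_2^{*}$ prevents this. Accordingly, the paper's Lemma \ref{L3} does \emph{not} prove a deterministic $O(n)$ bound; via a mark-tracking bookkeeping and the decomposition $\rho(X,X')\leq\rho(X,X'')+\rho(X'',X')$ it proves only the threshold-dependent bound
\begin{equation*}
\mathbb{E}[\rho(X,X')]\leq 4e^{\beta_2}\sum_{l=1}^{n}\frac{l}{\min\{N_l,N_l'\}}+2n,
\end{equation*}
in which the first term is exactly the cost of the head mismatches you hoped to confine to a single arc.

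The second ingredient, entirely absent from your proposal and the one that actually closes the paper's argument, is a probabilistic lower bound on the $N_l$ coming from the randomness of the thresholds: since $B_j\geq j$ and $\mathbb{P}(B_j=n)\geq e^{-2\beta_1 n}\geq e^{-2c_1}$ in the regime $\beta_1\leq c_1/n$, Hoeffding's inequality (applied conditionally on the coupling data at $k_1,k_2$) gives $\min\{N_l,N_l'\}\gtrsim l\,e^{-2c_1}$ for all $l\geq\sqrt{n}$ outside an event of probability $O(e^{-c\sqrt{n}})$, whence $\mathbb{E}\bigl[\sum_{l}l/\min\{N_l,N_l'\}\bigr]=O(n)+O(n^2e^{-c\sqrt{n}})$ and finally $\mathbb{E}[\rho(X,X')]\leq 4\beta_1\cdot O(n)\leq 1/2$ for $c_1^{*}$ small. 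Note that this is a second, independent use of the assumption $\beta_1=O(1/n)$ (to keep the thresholds, hence the $N_l$, large---not merely to make the union-bound probability $4\beta_1$ small), and it is also the true source of the hypothesis $n\geq N$: one needs $n^2e^{-c\sqrt{n}}$ to be dominated, which is not a matter of well-definedness as you suggest. Without either a proof of your deterministic lemma (which I do not believe admits the argument you outline) or this concentration step, your claimed bound $\mathbb{E}[\rho(X,X')]\leq 8C\beta_1 n\leq 1/2$ is not established.
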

\begin{remark}
Quantitative bounds on $c_1^{*},c_2^{*},N$ that suffice for the conclusion of Theorem \ref{ewens} to hold can be easily obtained from the proof of Theorem \ref{ewens}.
\end{remark}

The rest of this subsection is devoted to the proof of Theorem \ref{ewens}.

We have the following lemma on the measure $\mathbb{Q}_{\mathbf{b},\beta_2}$.

\begin{lemma}\label{L3}
Suppose that $\beta_2\geq 0$, $k\in [n]$, $b_k'\in [n]$, and $b_i\in [n]$ for every $i\in [n]$. Further suppose that $b_k'\geq k$ and $b_i\geq i$ for every $i\in [n]$. Denote by $\mathbf{b}=(b_1,\cdots,b_n),\mathbf{b}'=(b_1,\cdots,b_{k-1},b_k',b_{k+1},\cdots,b_n)$. Let $b_i'=b_i$ for every $i\in [n]\backslash \{k\}$, and
\begin{equation*}
    N_l=\# \{i\in[n]: b_i\geq l\}-(n-l),
\end{equation*}
\begin{equation*}
    N_l'=\# \{i\in[n]: b_i'\geq l\}-(n-l),
\end{equation*}
for any $l\in [n]$.

Define 
\begin{equation*}
    \mathcal{S}:=\{\sigma\in S_n: \sigma(i)\leq b_i\text{ for every }i\in[n]\},
\end{equation*}
\begin{equation*}
    \mathcal{S}':=\{\sigma\in S_n: \sigma(i)\leq b_i'\text{ for every }i\in [n]\}.
\end{equation*}
Assume that $\mathcal{S},\mathcal{S}'\neq \emptyset$. Further let $\mu=\mathbb{Q}_{\mathbf{b},\beta_2}$ and $\mu'=\mathbb{Q}_{\mathbf{b}',\beta_2}$. Then there exists a coupling $(X,X')$ of $\mu$ and $\mu'$, such that 
\begin{equation}
    \mathbb{E}[\rho(X,X')] \leq 4 e^{\beta_2}\sum_{l=1}^n\frac{l}{\min\{N_l,N_l'\}}+2n,
\end{equation}
where the path metric $\rho=\rho(u,v)$ is defined in (\ref{Pm}) with the graph $G=(\Omega,E)$ and the length function $l(u,v)$ defined as in Section \ref{Sect.3.2}.
\end{lemma}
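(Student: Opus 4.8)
The plan is to interpolate between $\mu=\mathbb{Q}_{\mathbf{b},\beta_2}$ and $\mu'=\mathbb{Q}_{\mathbf{b}',\beta_2}$ through their untwisted ($\beta_2=0$) counterparts and to glue three couplings by the coupling triangle inequality. Write $\nu=\mathbb{Q}_{\mathbf{b},0}$ and $\nu'=\mathbb{Q}_{\mathbf{b}',0}$, which are precisely the uniform distributions on $\mathcal{S}$ and $\mathcal{S}'$ (at $\beta_2=0$ the head factor $e^{\beta_2}$ disappears and the sampling procedure places each symbol uniformly among the $N_l$ available places). I would build (i) a coupling of $\mu$ and $\nu$ with expected cost $D_1$; (ii) a coupling of $\nu$ and $\nu'$ with expected cost $D_2$; and (iii) a coupling of $\nu'$ and $\mu'$ with expected cost $D_3$. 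Gluing along the shared marginals $\nu$ and $\nu'$ exactly as in the proof of \cite[Lemma 14.3]{LP} invoked in Theorem \ref{rho}, together with the triangle inequality for $\rho$, yields a coupling $(X,X')$ of $\mu$ and $\mu'$ with $\mathbb{E}[\rho(X,X')]\le D_1+D_2+D_3$.

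Step (ii) is immediate: $\mathbf{b}$ and $\mathbf{b}'$ differ only in coordinate $k$, and $\nu,\nu'$ are uniform on their nonempty constraint sets, so Lemma \ref{L2} applies verbatim and gives $D_2\le 2n$, the additive term in the target bound. Steps (i) and (iii) are symmetric, so I focus on (i). Here $\mu$ and $\nu$ share the boundary vector $\mathbf{b}$, hence the same counts $N_l$, and differ only through the weight $e^{\beta_2 C(\cdot)}$, which in the sequential sampling appears solely as the factor $e^{\beta_2}$ on the head of the open arc containing $l$ at step $l$. I would run the two procedures (placing the symbols $n,n-1,\dots,1$) simultaneously and couple them step by step. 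As long as the two partial permutations agree on all symbols larger than $l$, the available places and the head coincide, and the two conditional laws on the $N_l$ available places differ only at the head; their total variation equals $\frac{(e^{\beta_2}-1)(N_l-1)}{N_l(e^{\beta_2}+N_l-1)}\le e^{\beta_2}/N_l$. Using the maximal coupling at each such step, a fresh disagreement is opened at step $l$ with probability at most $e^{\beta_2}/N_l$.

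It remains to bound the $\rho$-cost of a disagreement. When the processes first disagree at step $l$, symbol $l$ is placed at two distinct positions while all larger symbols still agree; from here I would re-synchronize exactly as in the second part of the proof of Lemma \ref{Lem}, carrying the two distinct positions as a single floating discrepancy through the remaining placements. Since every discrepant symbol is then at most $l$, the telescoping estimate $\sum_s 2(l_{s+1}-l_s)\le 2l$ from that proof caps the repair at $2l$ adjacent transpositions. Multiplying the per-step disagreement probability by $2l$ and summing gives $D_1\le 2e^{\beta_2}\sum_{l=1}^n l/N_l$, and symmetrically $D_3\le 2e^{\beta_2}\sum_{l=1}^n l/N_l'$. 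Since $1/N_l,1/N_l'\le 1/\min\{N_l,N_l'\}$, adding $D_2\le 2n$ produces the claimed estimate.

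The step I expect to be the main obstacle is making the cost accounting in (i) rigorous against \emph{compounding}: once the two processes diverge, their open arcs acquire different heads, so the $e^{\beta_2}$ weighting can trigger further disagreements during re-synchronization, and a naive sum over steps risks double counting. The remedy is to design the coupling so that the discrepancy is maintained as a single floating position (an invariant in the spirit of cases (a)--(b) in the proof of Lemma \ref{Lem}), to charge the cost $2l$ only to the step at which a discrepancy is first opened, and to show that each later step either preserves or closes this single discrepancy without opening an independent new one. Verifying that this invariant survives the head-weighting, so that the expected charge at step $l$ is genuinely at most $(e^{\beta_2}/N_l)\cdot 2l$, is where the careful work lies.
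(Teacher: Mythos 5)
Your proposal is correct, and it takes a genuinely different route from the paper. The paper proves Lemma \ref{L3} by building a \emph{single direct coupling} of the two twisted measures $\mathbb{Q}_{\mathbf{b},\beta_2}$ and $\mathbb{Q}_{\mathbf{b}',\beta_2}$, handling the change of constraint vector and the $e^{\beta_2}$ head-weighting simultaneously; this is what forces the long case analysis (cases (a)--(b) with sub-cases) and the ``marks'' device, and the two cost sources appear there as $\rho(X,X'')\leq 2\sum_{l}l1_{B_l}$ (mark changes, probability at most $2e^{\beta_2}/N_l$ per step, both chains carrying heads) plus $\rho(X'',X')\leq 2n$ (the single chain forced by $b_k\neq b_k'$). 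You instead factor through the untwisted measures, $\mathbb{Q}_{\mathbf{b},\beta_2}\leftrightarrow\mathbb{Q}_{\mathbf{b},0}\leftrightarrow\mathbb{Q}_{\mathbf{b}',0}\leftrightarrow\mathbb{Q}_{\mathbf{b}',\beta_2}$, reuse Lemma \ref{L2} verbatim for the middle link (cost $\leq 2n$), isolate the twisting into two twisted-versus-uniform couplings with \emph{identical} constraints (costs $\leq 2e^{\beta_2}\sum_l l/N_l$ and $\leq 2e^{\beta_2}\sum_l l/N_l'$), and glue as in the proof of Theorem \ref{rho}. This is valid: your per-step total variation computation is exactly right, and it holds uniformly over histories because $N_l$ is deterministic (at step $l$ exactly $n-l$ positions are occupied, all with $b_i\geq l$), so the three expected costs add to the stated bound. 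What your route buys is modularity: the hard combinatorics of the constraint change is confined to the already-proved Lemma \ref{L2}, and each twisted-versus-uniform coupling is simpler than the paper's construction because only one of the two chains carries a head weight. What the paper's route buys is a self-contained construction with no gluing step; quantitatively the two give the same bound, your two factors of $e^{\beta_2}/N_l$ matching the paper's single factor of $2e^{\beta_2}/N_l$.

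One caveat about your last paragraph: the resolution of the ``compounding'' worry is \emph{not} to show that later steps open no new discrepancies --- they can and do, each with conditional probability at most $e^{\beta_2}/N_l$. The correct accounting, which your displayed bound $\sum_l(e^{\beta_2}/N_l)\cdot 2l$ already implements since it sums over all steps, is to let discrepancies accumulate: maintain a bijection (the paper's ``marks'') between the untaken positions of the two chains that maps available positions onto available positions; each coupling failure at step $l$ updates this bijection by one transposition of two positions that are both available at step $l$ and hence hold symbols at most $l$ in the final permutations; conjugating by $X$, the discrepancy permutation $YX^{-1}$ is a product of one transposition per failure, each of adjacent-transposition length at most $2l$, so the costs of successive failures simply add, with no interaction. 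Thus the invariant to verify is not ``a single floating discrepancy that is preserved or closed'' but ``one transposition per failure, charged $2l$ at its opening step''; once phrased this way, the head weighting causes no difficulty at all, because the total variation bound at each step does not depend on the current matching or on how many failures have already occurred.
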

\begin{proof}
The proof consists of two parts. In the first part, we construct the coupling $(X,X')$. The coupling is a ``twisted'' version of the coupling in Lemma \ref{Lem}. Then we obtain the desired upper bound on $\mathbb{E}[\rho(X,X')]$ in the second part.

\paragraph{Part 1}

Without loss of generality we assume that $b_k'>b_k$ (note that when $b_k'=b_k$, we have $\mu=\mu'$, and the conclusion holds trivially). Note that in this case, $N_l\leq N_l'$ for every $l\in [n]$.

We construct the coupling as follows. Sequentially for $l=n,\cdots,b_k'+1$, we look at places $i$ with $b_i\geq l$ that have not yet been taken, and let $X^{-1}(l)=X'^{-1}(l)$ following the first $n-b_k'$ steps of the sampling procedure for $\mathbb{Q}_{\mathbf{b},\beta_2}$. Namely, suppose that currently the open arc containing $l$ has $a$ as it head, then we let $X^{-1}(l)=a$ with probability $\frac{e^{\beta_2}}{e^{\beta_2}+N_l-1}$. We also let $X^{-1}(l)$ be each one of the remaining $N_l-1$ available places with probability $\frac{1}{e^{\beta_2}+N_l-1}$.

In the following, we associate a ``mark'' to every number in $[n]$. Originally the number $i$ is marked $i$ for every $i \in [n]$. The mark of a number may change during the evolution.

When $l=b_k'$, there are $N_l$ possible choices of $X^{-1}(l)$, which we denoted by $j_{l,1},\cdots,j_{l,N_l}$, and $N_l+1$ possible choices $j_{l,1},\cdots,j_{l,N_l},k$ of $X'^{-1}(l)$. We define the coupling at this step as follows. As $b_k'>b_k\geq k$, $k$ cannot be the current head of the open arc containing $l=b_k'$ in $X'$. Suppose that $a$ is the current head of the open arc containing $l$ in $X$ (and also in $X'$). For every $s\in\{1,\cdots,N_l\}$, we let $X^{-1}(l)=X'^{-1}(l)=j_{l,s}$ with probability $\frac{1}{e^{\beta_2}+N_l}$ (if $j_{l,s}\neq a$) or $\frac{e^{\beta_2}}{e^{\beta_2}+N_l}$ (if $j_{l,s}=a$). For every $s\in\{1,\cdots,N_l\}$, with probability $\frac{1}{e^{\beta_2}+N_l-1}-\frac{1}{e^{\beta_2}+N_l}$ (if $j_{l,s}\neq a$) or $\frac{e^{\beta_2}}{e^{\beta_2}+N_l-1}-\frac{e^{\beta_2}}{e^{\beta_2}+N_l}$ (if $j_{l,s}=a$), we let $X'^{-1}(l)=k$ and $X^{-1}(l) = j_{l,s}$. Note that for either case, if $l-1>b_k$, the set of possible choices of $X'^{-1}(l-1)$ is the union of the set of possible choices of $X^{-1}(l-1)$ and another element (one of $j_{l,1},\cdots,j_{l,N_l},k$).

Now we define the coupling for $l=b_k'-1,\cdots,b_k+1$ inductively. Assume that the set of possible $N_l+1$ choices of $X'^{-1}(l)$ is the union of the set of marks associated to the possible $N_l$ choices of $X^{-1}(l)$ (these marks are denoted by $j_{l,1},\cdots,j_{l,N_l}$) and another element (denoted by $t_l$). Note that this is satisfied for $l=b_k'-1$. Suppose that the heads of the current open arcs containing $l$ in $X$ and $X'$ are $a$ and $a'$, respectively. 

There are two possible cases: (a) $t_l=a'$; (b) $t_l\neq a'$. First consider \textbf{case (a)}. For every $s\in\{1,\cdots,N_l\}$, we let $X'^{-1}(l)=j_{l,s}$ and $X^{-1}(l)$ be the number that is marked $j_{l,s}$ with probability $\frac{1}{e^{\beta_2}+N_l}$. For every $s\in\{1,\cdots,N_l\}$, with probability $\frac{e^{\beta_2}}{e^{\beta_2}+N_l-1}-\frac{1}{e^{\beta_2}+N_l}$ (if the number that is marked $j_{l,s}$ is $a$) or $\frac{1}{e^{\beta_2}+N_l-1}-\frac{1}{e^{\beta_2}+N_l}$ (if the number that is marked $j_{l,s}$ is not $a$), we let $X'^{-1}(l)=t_l$ and $X^{-1}(l)$ be the number that is marked $j_{l,s}$. 

For \textbf{case (b)}, suppose that $a'=j_{l,s_1}$, $a$ is marked with $j_{l,s_0}$ and that $b$ is marked with $j_{l,s_1}$. There are two sub-cases: (i) $s_0=s_1$; (ii) $s_0\neq s_1$. First consider \textbf{sub-case (i)}. For every $s\in\{1,\cdots,N_l\}$, we let $X'^{-1}(l)=j_{l,s}$ and $X^{-1}(l)$ be the number that is marked $j_{l,s}$ with probability $\frac{e^{\beta_2}}{e^{\beta_2}+N_l}$ (if $s= s_0$) or $\frac{1}{e^{\beta_2}+N_l}$ (if $s\neq s_0$). For every $s\in\{1,\cdots,N_l\}$, we let $X'^{-1}(l)=t_l$ and $X^{-1}(l)$ be the number that is marked $j_{l,s}$ with probability $\frac{e^{\beta_2}}{e^{\beta_2}+N_l-1}-\frac{e^{\beta_2}}{e^{\beta_2}+N_l}$ (if $s= s_0$) or $\frac{1}{e^{\beta_2}+N_l-1}-\frac{1}{e^{\beta_2}+N_l}$ (if $s\neq s_0$). Now we consider \textbf{sub-case (ii)}. For every $s\neq s_0,s_1$, we let $X'^{-1}(l)=j_{l,s}$ and $X^{-1}(l)$ be the number that is marked $j_{l,s}$ with probability $\frac{1}{e^{\beta_2}+N_l}$. With probability $\frac{e^{\beta_2}}{e^{\beta_2}+N_l}$, we let $X'^{-1}(l)=j_{l,s_1}=a'$ and $X^{-1}(l)=a$ (which is marked with $j_{l,s_0}$), and change the mark of $b$ to $j_{l,s_0}$; with probability $\frac{1}{e^{\beta_2}+N_l}$, we let $X'^{-1}(l)=j_{l,s_0}$ and $X^{-1}(l)=b$ (which is marked with $j_{l,s_1}$), and change the mark of $a$ to $j_{l,s_1}$. For every $s\in\{1,\cdots,N_l\}$, we let $X'^{-1}(l)=t_l$ and $X^{-1}(l)$ be the number that is marked $j_{l,s}$ with probability $\frac{e^{\beta_2}}{e^{\beta_2}+N_l-1}-\frac{e^{\beta_2}}{e^{\beta_2}+N_l}$ (if $s= s_0$) or $\frac{1}{e^{\beta_2}+N_l-1}-\frac{1}{e^{\beta_2}+N_l}$ (if $s\neq s_0$). 

It can be checked that for both cases (a)-(b), if $l-1>b_k$, then the assumption that the set of possible $N_{l-1}+1$ choices of $X'^{-1}(l-1)$ is the union of the set of marks associated to the possible $N_{l-1}$ choices of $X^{-1}(l-1)$ and another element holds. Thus the above procedure defines a valid coupling for $l=b'_k-1,\cdots,b_k+1$. 

When $l=b_k$, suppose that the numbers with marks $j_{l,1},\cdots,j_{l,N_l-1},k$ are the $N_l$ possible choices of $X^{-1}(l)$. (Note that when $l=b_k$, the number marked with $k$ is just $k$.) Then $X'^{-1}(l)$ also has $N_l$ possible choices, which are given by $j_{l,1},\cdots,j_{l,N_l-1}$ and some $t_l\notin \{j_{l,1},\cdots,j_{l,N_l-1}\}$. Let $a,a'$ be the heads of the current open arcs containing $l$ of $X,X'$, respectively. If $a=k$, we let $s_0=N_l$; otherwise, suppose that $a$ is marked with $j_{l,s_0}$. If $a'=t_l$, we let $s_1=N_l$; otherwise, suppose that $a'=j_{l,s_1}$. Now there are two cases: (a) $s_0=s_1$; (b) $s_0\neq s_1$. 

For \textbf{case (a)}, there are two sub-cases: (i) $s_0=N_l$; (ii) $s_0\leq N_l-1$. For \textbf{sub-case (i)}, for every $s\in\{1,\cdots,N_l-1\}$, we let $X'^{-1}(l)=j_{l,s}$ and let $X^{-1}(l)$ be the number marked with $j_{l,s}$ with probability $\frac{1}{e^{\beta_2}+N_l-1}$. With probability $\frac{e^{\beta_2}}{e^{\beta_2}+N_l-1}$, we let $X'^{-1}(l)=t_l$ and $X^{-1}(l)=k$. For \textbf{sub-case (ii)}, for every $s\in \{1,\cdots,N_l-1\}\backslash \{s_0\}$, we let $X'^{-1}(l)=j_{l,s}$ and let $X^{-1}(l)$ be the number marked with $j_{l,s}$ with probability $\frac{1}{e^{\beta_2}+N_l-1}$. With probability $\frac{e^{\beta_2}}{e^{\beta_2}+N_l-1}$, we let $X'^{-1}(l)=j_{l,s_0}$ and let $X^{-1}(l)$ be the number marked with $j_{l,s_0}$. With probability $\frac{1}{e^{\beta_2}+N_l-1}$, we let $X'^{-1}(l)=t_l$ and $X^{-1}(l)=k$

For \textbf{case (b)}, there are three sub-cases: (i) $s_0,s_1\leq N_l-1$; (ii) $s_0= N_l,s_1\leq N_l-1$; (iii) $s_1=N_l,s_0\leq N_l-1$. For \textbf{sub-case (i)}, for every $s\leq N_l-1$ with $s\neq s_0,s_1$, we let $X'^{-1}(l)=j_{l,s}$ and let $X^{-1}(l)$ be the number marked with $j_{l,s}$ with probability $\frac{1}{e^{\beta_2}+N_l-1}$. With probability $\frac{e^{\beta_2}}{e^{\beta_2}+N_l-1}$, we let $X^{-1}(l)$ be the number marked with $j_{l,s_0}$ and let $X'^{-1}(l)=j_{l,s_1}$, and change the mark of the number that is originally marked $j_{l,s_1}$ to $j_{l,s_0}$.  With probability $\frac{1}{e^{\beta_2}+N_l-1}$, we let $X^{-1}(l)$ be the number marked with $j_{l,s_1}$ and let $X'^{-1}(l)=j_{l,s_0}$, and change the mark of the number that is originally marked $j_{l,s_0}$ to $j_{l,s_1}$. With probability $\frac{1}{e^{\beta_2}+N_l-1}$, we let $X'^{-1}(l)=t_l$ and $X^{-1}(l)$ be the number marked with $k$. For \textbf{sub-case (ii)}, for every $s\leq N_l-1$ with $s\neq s_1$, we let $X'^{-1}(l)=j_{l,s}$ and let $X^{-1}(l)$ be the number marked with $j_{l,s}$ with probability $\frac{1}{e^{\beta_2}+N_l-1}$. With probability $\frac{e^{\beta_2}}{e^{\beta_2}+N_l-1}$, we let $X^{-1}(l)$ be the number marked with $k$ and let $X'^{-1}(l)=j_{l,s_1}$, and change the mark of the number that is originally marked $j_{l,s_1}$ to $k$. With probability $\frac{1}{e^{\beta_2}+N_l-1}$, we let $X^{-1}(l)$ be the number marked with $j_{l,s_1}$ and let $X'^{-1}(l)=t_l$, and change the mark of the number that is originally marked $k$ to $j_{l,s_1}$. For \textbf{sub-case (iii)}, for every $s\leq N_l-1$ with $s\neq s_0$, we let $X'^{-1}(l)=j_{l,s}$ and let $X^{-1}(l)$ be the number marked with $j_{l,s}$ with probability $\frac{1}{e^{\beta_2}+N_l-1}$. With probability $\frac{e^{\beta_2}}{e^{\beta_2}+N_l-1}$, we let $X^{-1}(l)$ be the number marked with $j_{l,s_0}$ and let $X'^{-1}(l)=t_l$, and change the mark of the number that is originally marked $k$ to $j_{l,s_0}$. With probability $\frac{1}{e^{\beta_2}+N_l-1}$, we let $X^{-1}(l)$ be the number marked with $k$ and let $X'^{-1}(l)=j_{l,s_0}$, and change the mark of the number that is originally marked $j_{l,s_0}$ to $k$.

We define the coupling for $l=b_k-1,\cdots,1$ inductively as below. Assume that either of the following two possibilities is true: (a) the $N_l$ possible choices of $X'^{-1}(l)$ are the same as the marks associated with the $N_l$ possible choices of $X^{-1}(l)$; (b) the $N_l$ possible choices of $X^{-1}(l)$ are the numbers marked with $j_{l,1},\cdots,j_{l,N_l-1},k$ for some $j_{l,1},\cdots,j_{l,N_l-1}$, and the $N_l$ possible choices of $X'^{-1}(l)$ are $j_{l,1},\cdots,j_{l,N_l-1},t_l$ for some $t_l\notin \{j_{l,1},\cdots,j_{l,N_l-1}\}$. Note that the assumption holds when $l=b_k-1$. For both cases, the coupling can be defined in a similar way as the case when $l=b_k$. It can be checked that if $l-1\geq 1$, the assumption holds with $l$ replaced by $l-1$.

This finishes the construction of the coupling. 

\paragraph{Part 2}

In the following, we bound $\mathbb{E}[\rho(X,X')]$.

Consider the set of steps $l\in [n]$ where there is a change of some mark. Suppose the elements of this set are ordered as $l_1>l_2>\cdots>l_r$. Suppose that at step $l_i$, the mark of a number is changed from $a_i$ to $b_i$. Now starting from $X^{-1}$, sequentially for $i=1,\cdots,r$, we transpose the symbols $a_i$ and $b_i$, and obtain a new permutation $X''^{-1}$ in the end. That is,
\begin{equation*}
   X''^{-1}=(a_r,b_r)\cdots(a_1,b_1) X^{-1}.
\end{equation*}
Note that the $i$th transposition can be decomposed into at most $2l_i$ adjacent transpositions. Hence
\begin{equation}
    \rho(X,X'')\leq 2\sum_{i=1}^r l_i=2\sum_{l=1}^n l 1_{B_l},
\end{equation}
where $B_l$ is the event that there is a change of some mark at step $l$. Now note that
\begin{equation}
    \mathbb{P}(B_l)\leq \frac{2e^{\beta_2}}{e^{\beta_2}+N_l-1}\leq \frac{2e^{\beta_2}}{N_l}.
\end{equation}
Thus we have
\begin{equation}\label{Eq1}
    \mathbb{E}[\rho(X,X'')]\leq 4e^{\beta_2}\sum_{l=1}^n l N_l^{-1}.
\end{equation}

Order the elements from the set $\{l\in [n]: X'^{-1}(l)\neq X''^{-1}(l)\}$ as $m_1>m_2>\cdots>m_p$. Then from our construction, we have that $X'^{-1}(m_1)=k,X''^{-1}(m_p)=k$, and $X''^{-1}(m_{s})=X'^{-1}(m_{s+1})$ for any $s=1,\cdots,p-1$. Similar to the last part of the proof of Lemma \ref{Lem}, we have
\begin{equation}
    \rho(X'',X')\leq 2n.
\end{equation}
Hence
\begin{equation}\label{Eq2}
    \mathbb{E}[\rho(X'',X')]\leq 2n.
\end{equation}

Now using (\ref{Eq1}), (\ref{Eq2}) and the fact that $\rho(X,X')\leq \rho(X,X'')+\rho(X'',X')$, we have
\begin{equation}
     \mathbb{E}[\rho(X,X')]\leq 4e^{\beta_2}\sum_{l=1}^n l N_l^{-1}+2n.
\end{equation}

\end{proof}

\begin{proof}[Proof of theorem \ref{ewens}]
We assume that $0<\beta_1\leq \frac{c_1}{n}$ and $0\leq \beta_2   \leq c_2$ for some $c_1,c_2>0$. We use the path coupling technique. The choice of the graph structure $G=(\Omega,E)$ and the path metric $\rho$ is the same as that from the proof of Theorem \ref{rho}. We have $diam(\Omega)\leq 2n^2$.

Consider two permutations $\sigma,\sigma'\in  S_n$ that are adjacent in $G$. Assume that $\sigma'=(i,i+1)\sigma$, where $i\in\{1,\cdots,n-1\}$. Suppose that $\sigma(k_1)=i$ and $\sigma(k_2)=i+1$. Thus $\sigma'(k_1)=i+1$ and $\sigma'(k_2)=i$.
In the following, we construct a coupling $(X,X')$ of the distributions $K_{\beta_1,\beta_2,\sigma}^1$ and $K_{\beta_1,\beta_2,\sigma'}^1$.

For the first step of the hit and run algorithm, we define $B_j$ and $B_j'$ and their coupling the same way as in the proof of Theorem \ref{footru}, taking $\beta=\beta_1$. We recall from the proof of Theorem \ref{footru} that $B_j=B_j'$ for $j\neq k_1,k_2$, and that
\begin{equation}
    \mathbb{P}(B_{k_1}\neq B'_{k_1})=1-e^{-2\beta_1}, \quad \mathbb{P}(B_{k_2}\neq B'_{k_2})=1-e^{-2\beta_1}.
\end{equation}

We let $\mathcal{G}=\sigma(B_1,B_1',\cdots,B_n,B_n')$. We also let $\mathcal{F}=\sigma(B_{k_1},B'_{k_1},B_{k_2},B'_{k_2})$.

Now we consider the second part of the twisted algorithm. We denote by $\mathbf{B}=(B_1,\cdots,B_n)$ and $\mathbf{B}'=(B_1',\cdots,B_n')$. Let $N_l=\#\{i\in [n]: B_i\geq l\}-(n-l)$, $N_l'=\#\{i\in [n]: B_i'\geq l\}-(n-l)$. 

If $B_{k_1}=B_{k_1}'$ and $B_{k_2}=B_{k_2}'$, we let $X=X'$ be sampled from $\mathbb{Q}_{\mathbf{B},\beta_2}$. In this case,
\begin{equation}
    \mathbb{E}[\rho(X,X')|\mathcal{G}]=0.
\end{equation}

Otherwise, either $B_{k_1}\neq B_{k_1}'$ or $B_{k_2}\neq B_{k_2}'$. By Lemma \ref{L3}, similar to the proof of Theorem \ref{rho}, conditional on $\mathcal{G}$, there exists a coupling $(X,X')$ between $\mathbb{Q}_{\mathbf{B},\beta_2} $ and $\mathbb{Q}_{\mathbf{B'},\beta_2}$, such that
\begin{equation}
    \mathbb{E}[\rho(X,X')|\mathcal{G}]\leq 8e^{\beta_2}\sum_{l=1}^n\frac{l}{\min\{N_l,N_l'\}}+4n.
\end{equation}

Now note that as $B_j\geq j$ for every $j\in [n]$. We have
\begin{eqnarray*}
    N_l&=&\#\{j\in [n]: B_j\geq  l\}-(n-l)\\
    &=& 1+\sum_{j=1}^{l-1}1_{B_j\geq l}\\
    &\geq & 1+\sum_{j:1\leq j\leq l-1,j\neq k_1,k_2}1_{B_j\geq l}. 
\end{eqnarray*}
For any $1\leq j\leq l-1$ and any $l\in [n]$, we have
\begin{equation}
    \mathbb{P}(B_j\geq l)\geq \mathbb{P}(B_j=n)\geq e^{-2\beta_1 n}. 
\end{equation}

By Hoeffding's inequality, for any $t\geq 0$ and any $l\geq 4$, we have
\begin{equation}
    \mathbb{P}(\frac{N_l-1}{l-3}-e^{-2\beta_1 n}\leq -t|\mathcal{F})\leq e^{-2(l-3)t^2}.
\end{equation}
Take $t=\frac{1}{2}e^{-2\beta_1 n}$. We obtain that for any $l\geq 4$,
\begin{equation}
    \mathbb{P}(N_l\leq 1+\frac{l-3}{2}e^{-2\beta_1 n}|\mathcal{F})\leq e^{-\frac{1}{2}(l-3)e^{-4\beta_1 n}}.
\end{equation}
Similarly, for any $l\geq 4$,
\begin{equation}
    \mathbb{P}(N_l'\leq 1+\frac{l-3}{2}e^{-2\beta_1 n}|\mathcal{F})\leq e^{-\frac{1}{2}(l-3)e^{-4\beta_1 n}}.
\end{equation}
Hence for any $l\geq \max\{4,n^{\frac{1}{2}}\}$,
\begin{equation}
    \mathbb{P}(\min\{N_l,N_l'\}\leq 1+\frac{l}{8}e^{-2c_1}|\mathcal{F})\leq 2 e^{-\frac{n^{\frac{1}{2}}}{8}e^{-4 c_1}}.
\end{equation}
Note that $N_l\geq 1$, $N_l'\geq 1$. Therefore
\begin{eqnarray*}
\mathbb{E}[\sum_{l=1}^n\frac{l}{\min\{N_l,N_l'\}}|\mathcal{F}] &\leq& \sum_{l=1}^4 l +\sum_{l=1}^{\lfloor n^{\frac{1}{2}}\rfloor}l+8 e^{2 c_1}n+2n^2 e^{-\frac{n^{\frac{1}{2}}}{8}e^{-4c_1}}\\
&\leq& 10+ (1+8 e^{2c_1}) n+2n^2 e^{-\frac{n^{\frac{1}{2}}}{8}e^{-4c_1}}.
\end{eqnarray*}

Let
\begin{equation*}
    f(c_1,c_2,n)=80 e^{c_2}+(8e^{c_2}+64e^{c_2+2c_1}+4)n+16e^{c_2}n^2 e^{-\frac{n^{\frac{1}{2}}}{8}e^{-4c_1}}.
\end{equation*}
We have
\begin{eqnarray*}
 \mathbb{E}[\rho(X,X')|    \mathcal{F}]\leq f(c_1,c_2,n) 1_{B_{k_1}\neq B_{k_1'}\text{ or } B_{k_2}\neq B_{k_2'}}.
\end{eqnarray*}
This leads to
\begin{eqnarray*}
     \mathbb{E}[\rho(X,X')] &\leq& f(c_1,c_2,n)\mathbb{P}(B_{k_1}\neq B_{k_1'}\text{ or } B_{k_2}\neq B_{k_2'})\\
     &\leq& 4\beta_1 f(c_1,c_2,n)\leq g(c_1,c_2,n),
\end{eqnarray*}
where 
\begin{equation*}
    g(c_1,c_2,n)=\frac{320 c_1 e^{c_2}}{n}+4c_1(8e^{c_2}+64e^{c_2+2c_1}+4)+64 c_1 e^{c_2} n  e^{-\frac{n^{\frac{1}{2}}}{8}e^{-4c_1}}.
\end{equation*}
By Theorem \ref{Path}, if $g(c_1,c_2,n)<1$, then for any $l\geq 0$,
\begin{equation}
    \max_{\sigma_0\in S_n}\|K_{\beta_1,\beta_2,\sigma_0}^l-\mathbb{P}_{\beta_1,\beta_2} \| \leq 2n^2 (g(c_1,c_2,n))^l.
\end{equation}
This leads to the conclusion of Theorem \ref{ewens}.

\end{proof}

\section{Extension to lattice permutations in higher dimensions}\label{Sect.5}

In this section, we consider higher dimensional analogs of Mallows permutation models with $L^1$ and $L^2$ distances. These analogs are called ``lattice permutations'' (see e.g. \cite{GLU}). In the mathematical physics literature, these higher dimensional permutation models are of interest due to their connections to Feynman's approach to the study of quantum Bose gas \cite{Fey}. The reader is referred to e.g. \cite{Bet,Fic,GRU,GLU} for some recent developments in the literature. 

We introduce these permutation models as follows. We fix the dimension $d\geq 2$ and a positive integer $N$. Denote by $S_{N,d}$ the set of bijections from $[N]^d$ to itself. For any $\sigma\in S_{N,d}$, we define
\begin{equation}
    H_d(\sigma)=\sum_{\mathbf{x}\in [N]^d}\sum_{i=1}^d |\sigma(\mathbf{x})_i-\mathbf{x}_i|, \quad \tilde{H}_d(\sigma)=\sum_{\mathbf{x}\in [N]^d}\|\sigma(\mathbf{x})-\mathbf{x}\|^2,
\end{equation}
where $\|\cdot\|$ is the Euclidean distance in $\mathbb{R}^d$. For any $\beta>0$, we further define the following probability measures on $S_{N,d}$:
\begin{equation}
    \mathbb{P}_{\beta,d}(\sigma)=Z_{\beta,d}^{-1}\exp(-\beta H_d(\sigma)), \quad \tilde{\mathbb{P}}_{\beta,d}(\sigma)=\tilde{Z}_{\beta,d}^{-1}\exp(-\beta \tilde{H}_d(\sigma)), 
\end{equation}
where $\sigma\in S_{N,d}$, and $Z_{\beta,d}$ and $\tilde{Z}_{\beta,d}$ are the normalizing constants. We call these probability measures ``lattice permutations with $L^1$ and $L^2$ distances'' in the following. Setting $d=1$, these reduce to $\mathbb{P}_{\beta}$ and $\tilde{\mathbb{P}}_{\beta}$--Mallows permutation models with $L^1$ and $L^2$ distances. Using similar arguments as those leading to (\ref{Cor}) and (\ref{foot}), we can derive that for any $\sigma\in S_{N,d}$,
\begin{equation}
    \mathbb{P}_{\beta,d}(\sigma)\propto \exp(-2\beta \sum_{\mathbf{x}\in [N]^d}\sum_{i=1}^d (\sigma(\mathbf{x})_i-\mathbf{x}_i)_{+}),
\end{equation}
\begin{equation}
    \mathbb{P}_{\beta,d}(\sigma)\propto \exp(2\beta \sum_{\mathbf{x}\in [N]^d}\sum_{i=1}^d \mathbf{x}_i  \sigma(\mathbf{x})_i).
\end{equation}

Most of the existing algorithms for simulating from these lattice permutations are based on the Metropolis algorithm (see e.g. \cite{Bet,GRU,GLU}). In this section, we introduce a generalization of the hit and run algorithms introduced in Section \ref{Sect.2} to sample from such higher dimensional models. The generalization combines the hit and run algorithms with a Gibbs sampler.

In the following we introduce the generalized algorithms. We start from $\tilde{\mathbb{P}}_{\beta,d}$--lattice permutations with $L^2$ distance. 

The generalized algorithm is based on a Gibbs sampler for sampling from the uniform distribution on the following subset of $S_{N,d}$:
\begin{equation*}
    \mathcal{M}_{\mathbf{b}}=\{\sigma\in S_{N,d}: (\sigma(\mathbf{x}))_i\geq b_{\mathbf{x},i}\text{ for every }\mathbf{x}\in [N]^d, i\in [d]\},
\end{equation*}
where $\mathbf{b}=(b_{\mathbf{x},i})_{\mathbf{x}\in [N]^d,i\in [d]}$ with $b_{\mathbf{x},i}\in\mathbb{R}$ for every $\mathbf{x}\in [N]^d,i\in [d]$. Assuming that $\mathcal{M}_{\mathbf{b}}\neq \emptyset$, we denote by $\pi_{\mathcal{M}_{\mathbf{b}}}$ the uniform distribution on $\mathcal{M}_{\mathbf{b}}$.

We introduce the Gibbs sampler for sampling from $\pi_{\mathcal{M}_{\mathbf{b}}}$ as follows. Each step of the Gibbs sampler proceeds as follows. Suppose we start at $\sigma\in \mathcal{M}_{\mathbf{b}}$. Let $\sigma_0=\sigma$. Sequentially for $j=1,\cdots,d$, let
\begin{equation*}
    \mathcal{A}_j=\{\tau\in S_{N,d}: (\tau(\mathbf{x}))_l=(\sigma_{j-1}(\mathbf{x}))_l\text{ for every }\mathbf{x}\in [N]^d, l\in [d]\backslash\{j\}\},
\end{equation*}
and sample $\sigma_j\in S_{N,d}$ from $\pi_{\mathcal{M}_{\mathbf{b}}}(\cdot|\mathcal{A}_j)$--the uniform distribution on $\mathcal{M}_{\mathbf{b}}\cap\mathcal{A}_j$. The chain then moves from $\sigma$ to $\sigma_d$. 

The practical implementation of the sampling process for $\sigma_j$ with $j\in [d]$ can be described as follows. For any $\mathbf{y}\in [N]^d$ and $i\in [d]$, we denote by $\mathbf{y}_{-i}=(\mathbf{y}_1,\cdots,\mathbf{y}_{i-1},\mathbf{y}_{i+1},\cdots,\mathbf{y}_d)$. Now for any $\mathbf{z}\in [N]^{d-1}$, we let 
\begin{equation*}
    \mathcal{D}_{\mathbf{z},j}=\{\mathbf{x}\in [N]^d: (\sigma_{j-1}(\mathbf{x}))_{-j}=\mathbf{z}\},
\end{equation*}
and note that $|\mathcal{D}_{\mathbf{z},j}|=N$. Suppose that
\begin{equation*}
    \mathcal{D}_{\mathbf{z},j}=\{\mathbf{w}_{\mathbf{z},j,1},\cdots,\mathbf{w}_{\mathbf{z},j,N}\}.
\end{equation*}
For any $\mathbf{z}\in [N]^{d-1}$, we sample $\gamma_{\mathbf{z},j}\in S_N$ uniformly from the set
\begin{equation*}
    \{\tau\in S_N: \tau(l)\geq b_{\mathbf{w}_{\mathbf{z},j,l},j}\text{ for every }l\in [N]\}.
\end{equation*}
Note that this is a set of permutations with one-sided restrictions, hence $\gamma_{\mathbf{z},j}$ can be sampled via the procedure described in Section \ref{Sect.2}. Then $\sigma_j$ is determined as follows: for any $\mathbf{x}\in [N]^d$, suppose that $\mathbf{x}\in\mathcal{D}_{\mathbf{z},j}$ and $\mathbf{x}=\mathbf{w}_{\mathbf{z},j,l}$ (where $\mathbf{z}\in [N]^{d-1}$ and $l\in [N]$), then we let
\begin{equation*}
    \sigma_j(\mathbf{x})=(\mathbf{z}_1,\cdots,\mathbf{z}_{j-1},\gamma_{\mathbf{z},j}(l),\mathbf{z}_{j},\cdots,\mathbf{z}_{d-1}).
\end{equation*}
It can be checked that $\sigma_j$ follows the uniform distribution on $\mathcal{M}_{\mathbf{b}}\cap\mathcal{A}_j$.

The generalized hit and run algorithm for sampling from $\tilde{\mathbb{P}}_{\beta,d}$ can be described as follows. Each step of the algorithm consists of two sequential parts:
\begin{itemize}
    \item Starting from $\sigma\in S_{N,d}$, for each $\mathbf{x}\in [N]^d$ and $i\in [d]$, sample independently $u_{\mathbf{x},i}$ from the uniform distribution on $[0,e^{2\beta\mathbf{x}_i\sigma(\mathbf{x})_i}]$. Let $b_{\mathbf{x},i}=\frac{\log(u_{\mathbf{x},i})}{2\beta \mathbf{x}_i}$;
    \item Let $\sigma_0=\sigma$, and let $\mathbf{b}$ and $\mathcal{M}_{\mathbf{b}}$ be defined as above. Sequentially for $j=1,\cdots,d$, let $\mathcal{A}_{j}$ be defined as above, and sample $\sigma_j\in S_{N,d}$ from the uniform distribution on $\mathcal{M}_{\mathbf{b}}\cap\mathcal{A}_j$ (with the practical implementation given as above). The chain then moves from $\sigma$ to $\sigma_d$. 
\end{itemize}

The following lemma justifies the correctness of the generalized algorithm.

\begin{lemma}
The stationary distribution of the above generalized algorithm is given by $\tilde{\mathbb{P}}_{\beta,d}$.
\end{lemma}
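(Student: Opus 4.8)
The plan is to recognize the generalized algorithm as an instance of the generalized hit and run scheme reviewed in Section~\ref{Sect.1.2}, with target measure $\pi=\tilde{\mathbb{P}}_{\beta,d}$ and the auxiliary collection $\mathbf{b}=(b_{\mathbf{x},i})_{\mathbf{x}\in[N]^d,i\in[d]}$ (equivalently $\mathbf{u}=(u_{\mathbf{x},i})$) playing the role of the auxiliary variable. First I would record the proposal density produced by the first part of the algorithm: since each $u_{\mathbf{x},i}$ is drawn uniformly on $[0,e^{2\beta\mathbf{x}_i\sigma(\mathbf{x})_i}]$, the conditional density of $\mathbf{u}$ given $\sigma$ is
\begin{equation*}
w_\sigma(\mathbf{u})=\prod_{\mathbf{x}\in[N]^d}\prod_{i=1}^d e^{-2\beta\mathbf{x}_i\sigma(\mathbf{x})_i}\,\mathbbm{1}\{u_{\mathbf{x},i}\le e^{2\beta\mathbf{x}_i\sigma(\mathbf{x})_i}\}.
\end{equation*}
Forming $f(\sigma,\mathbf{u})=w_\sigma(\mathbf{u})\,\tilde{\mathbb{P}}_{\beta,d}(\sigma)$ and using $\tilde{\mathbb{P}}_{\beta,d}(\sigma)=\tilde{Z}_{\beta,d}^{-1}\exp(2\beta\sum_{\mathbf{x},i}\mathbf{x}_i\sigma(\mathbf{x})_i)$, the exponential weights cancel and leave $f(\sigma,\mathbf{u})=\tilde{Z}_{\beta,d}^{-1}\mathbbm{1}\{u_{\mathbf{x},i}\le e^{2\beta\mathbf{x}_i\sigma(\mathbf{x})_i}\ \forall\,\mathbf{x},i\}$. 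Because $b_{\mathbf{x},i}=\log(u_{\mathbf{x},i})/(2\beta\mathbf{x}_i)$ and $2\beta\mathbf{x}_i>0$, the constraint $u_{\mathbf{x},i}\le e^{2\beta\mathbf{x}_i\sigma(\mathbf{x})_i}$ is exactly $\sigma(\mathbf{x})_i\ge b_{\mathbf{x},i}$, so the indicator equals $\mathbbm{1}\{\sigma\in\mathcal{M}_{\mathbf{b}}\}$. Hence for fixed $\mathbf{b}$ the conditional $f(\cdot\mid\mathbf{b})$ is constant on $\mathcal{M}_{\mathbf{b}}$ and vanishes off it, i.e. $f(\cdot\mid\mathbf{b})=\pi_{\mathcal{M}_{\mathbf{b}}}$; moreover any $\mathbf{b}$ generated from a permutation $\sigma$ satisfies $\sigma\in\mathcal{M}_{\mathbf{b}}$, so $\mathcal{M}_{\mathbf{b}}\neq\emptyset$ and the support conditions on the proposal kernel hold.

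Next I would check that the second part of the algorithm defines a Markov kernel $K_{\mathbf{b}}$ on $S_{N,d}$ that preserves $\pi_{\mathcal{M}_{\mathbf{b}}}$. Each sweep $j=1,\dots,d$ replaces the current permutation by a draw from $\pi_{\mathcal{M}_{\mathbf{b}}}(\cdot\mid\mathcal{A}_j)$; by the discussion preceding the lemma this conditional is realized correctly by the fiberwise one-sided-restricted sampling over the sets $\mathcal{D}_{\mathbf{z},j}$, so each sweep is a genuine Gibbs update for $\pi_{\mathcal{M}_{\mathbf{b}}}$ and therefore preserves it. As the composition of finitely many kernels each preserving $\pi_{\mathcal{M}_{\mathbf{b}}}$ again preserves $\pi_{\mathcal{M}_{\mathbf{b}}}$, the kernel $K_{\mathbf{b}}$ obtained from the full sweep $j=1,\dots,d$ has $\pi_{\mathcal{M}_{\mathbf{b}}}=f(\cdot\mid\mathbf{b})$ as a stationary distribution. (Only stationarity, not reversibility, is needed here; indeed $K_{\mathbf{b}}$ is generally not reversible because the sweeps are performed in the fixed order $j=1,\dots,d$.)

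Finally I would assemble the pieces through the standard averaging identity for hit and run. The full transition kernel is $K(\sigma,\sigma')=\int w_\sigma(\mathbf{u})\,K_{\mathbf{b}(\mathbf{u})}(\sigma,\sigma')\,d\mathbf{u}$. Writing the marginal $f(\mathbf{u})=\sum_{\sigma}f(\sigma,\mathbf{u})$ and using $f(\sigma,\mathbf{u})=f(\mathbf{u})f(\sigma\mid\mathbf{u})$ together with the stationarity relation $\sum_\sigma f(\sigma\mid\mathbf{u})K_{\mathbf{b}(\mathbf{u})}(\sigma,\sigma')=f(\sigma'\mid\mathbf{u})$,
\begin{equation*}
\sum_{\sigma}\tilde{\mathbb{P}}_{\beta,d}(\sigma)K(\sigma,\sigma')=\int\Big(\sum_\sigma f(\sigma,\mathbf{u})K_{\mathbf{b}(\mathbf{u})}(\sigma,\sigma')\Big)\,d\mathbf{u}=\int f(\mathbf{u})f(\sigma'\mid\mathbf{u})\,d\mathbf{u}=\tilde{\mathbb{P}}_{\beta,d}(\sigma'),
\end{equation*}
which is precisely stationarity of $K$ with respect to $\tilde{\mathbb{P}}_{\beta,d}$. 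The main obstacle, and the only genuinely non-formal point, is the claim invoked in the second paragraph that the $j$-th sweep samples $\pi_{\mathcal{M}_{\mathbf{b}}}(\cdot\mid\mathcal{A}_j)$ correctly: one must see that fixing all image coordinates except the $j$-th forces $\sigma$ to permute, within each fiber $\mathcal{D}_{\mathbf{z},j}$, the $j$-th coordinates among themselves, so that the feasible completions factorize into independent one-sided-restricted permutation problems on the $N$-element fibers, each handled by the sampler of Section~\ref{Sect.2}. Granting this (asserted just before the lemma), the remaining work is the purely formal composition and averaging argument above, with the mild extra bookkeeping that the auxiliary space is continuous so that the relevant sums over $\mathbf{u}$ are replaced by integrals against Lebesgue measure.
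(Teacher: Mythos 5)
Your proposal is correct and follows essentially the same route as the paper's proof: both reduce the claim to (i) the fact that the Gibbs sweep kernel $K_{\mathbf{b}}$ preserves the uniform distribution on $\mathcal{M}_{\mathbf{b}}$, which coincides with the conditional law of $\sigma$ given the auxiliary variables, and (ii) an averaging identity over the auxiliary variables. The only difference is cosmetic bookkeeping: you work with the normalized joint density $f(\sigma,\mathbf{u})$ from the hit and run framework of Section \ref{Sect.1.2}, while the paper runs the same computation with the unnormalized weight $q_{\beta,d}(\sigma)$ and indicator functions.
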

\begin{proof}
We denote by $K$ the transition kernel of the generalized algorithm, and $K_{\mathbf{b}}$ the transition kernel of the aforementioned Gibbs sampler for sampling from $\pi_{\mathcal{M}_{\mathbf{b}}}$--the uniform distribution on $\mathcal{M}_{\mathbf{b}}$ (here $\mathbf{b}=(b_{\mathbf{x},i})_{\mathbf{x}\in [N]^d,i\in [d]}$ with $b_{\mathbf{x},i}\in\mathbb{R}$ for every $\mathbf{x}\in [N]^d,i\in [d]$). From the theory of Gibbs samplers, the stationary distribution of $K_{\mathbf{b}}$ is given by $\pi_{\mathcal{M}_{\mathbf{b}}}$. Hence for any $\sigma,\sigma'\in S_{N,d}$,
\begin{equation*}
    \sum_{\sigma\in S_{N,d}}1_{\sigma\in\mathcal{M}_{\mathbf{b}}}  K_{\mathbf{b}}(\sigma,\sigma')=1_{\sigma'\in\mathcal{M}_{\mathbf{b}}}.
\end{equation*}

For any $\sigma\in S_{N,d}$, we denote by
\begin{equation*}
 q_{\beta,d}(\sigma):=\exp\Big(2\beta \sum_{\mathbf{x}\in [N]^d}\sum_{i=1}^d \mathbf{x}_i  \sigma(\mathbf{x})_i\Big).
\end{equation*}
For any $\sigma,\sigma'\in S_{N,d}$, we have
\begin{eqnarray*}
  &&\sum_{\sigma\in S_{N,d}}q_{\beta,d}(\sigma)K(\sigma,\sigma') \\
  &=&  \sum_{\sigma \in S_{N,d}}\frac{q_{\beta,d}(\sigma)}{\exp\Big(2\beta  \sum\limits_{\mathbf{x}\in [N]^d}\sum\limits_{i=1}^d\mathbf{x}_i\sigma(\mathbf{x})_i\Big)}\\
  &\times&  \int_{[0,\infty)^{d N^d}}\Big(\prod\limits_{\mathbf{x}\in [N]^d}\prod\limits_{i=1}^d du_{\mathbf{x},i}\Big) 1_{\sigma\in \mathcal{M}_{\mathbf{b}}}1_{\sigma'\in \mathcal{M}_{\mathbf{b}}} K_{\mathbf{b}}(\sigma,\sigma')\\
  &=& \int_{[0,\infty)^{d N^d}}\Big(\prod\limits_{\mathbf{x}\in [N]^d}\prod\limits_{i=1}^d du_{\mathbf{x},i}\Big) 1_{\sigma' \in \mathcal{M}_{\mathbf{b}}}\sum_{\sigma\in S_{N,d}}1_{\sigma\in \mathcal{M}_{\mathbf{b}}} K_{\mathbf{b}}(\sigma,\sigma')\\
  &=& \int_{[0,\infty)^{d N^d}}\Big(\prod\limits_{\mathbf{x}\in [N]^d}\prod\limits_{i=1}^d du_{\mathbf{x},i}\Big)1_{\sigma'\in \mathcal{M}_{\mathbf{b}}}= q_{\beta,d}(\sigma'),
\end{eqnarray*}
where in the third line, $\mathbf{b}$ is obtained from $(u_{\mathbf{x},i})_{\mathbf{x}\in [N]^d,i\in [d]}$ as in the first part of the generalized algorithm. This shows that $\tilde{\mathbb{P}}_{\beta,d}$ is the stationary distribution of the generalized algorithm.
\end{proof}

Similarly, the generalized hit and run algorithm for sampling from $\mathbb{P}_{\beta,d}$ can be described as follows. Each step of the algorithm involves the following two sequential parts:
\begin{itemize}
    \item Starting from $\sigma\in S_{N,d}$, for each $\mathbf{x}\in [N]^d$ and $i\in [d]$, sample independently $u_{\mathbf{x},i}$ from the uniform distribution on $[0,e^{-2\beta (\sigma(\mathbf{x})_i-\mathbf{x}_i)_{+}}]$. Let $b_{\mathbf{x},i}=\mathbf{x}_i-\frac{\log(u_{\mathbf{x},i})}{2\beta}$;
    \item Let $\sigma_0=\sigma$ and $\mathbf{b}$ be defined as above. Let
    \begin{equation*}
        \mathcal{M}'_{\mathbf{b}}=\{\sigma\in S_{N,d}:(\sigma(\mathbf{x}))_i\leq b_{\mathbf{x},i} \text{ for every }\mathbf{x}\in [N]^d,i\in [d] \}. 
    \end{equation*}
    Sequentially for $j=1,\cdots,d$, let $\mathcal{A}_j$ be defined as above, and sample $\sigma_j\in S_{N,d}$ from the uniform distribution on $\mathcal{M}'_{\mathbf{b},j}\cap\mathcal{A}_j$ (with a similar practical implementation as that for $\tilde{P}_{\beta,d}$). The chain then moves from $\sigma$ to $\sigma_d$.
\end{itemize}

At present writing, sharp analysis of these algorithms remains for the future, but extensive simulation studies suggest that they work well.

\section{Simulation studies}\label{Sect.6}

In this section, we display some simulation results to compare the empirical performance of the hit and run algorithm and the Metropolis algorithm for Mallows permutation models with $L^1$ and $L^2$ distances and the two-parameter permutation model with $L^1$ distance and Cayley distance. 

The Metropolis algorithm for sampling from the two-parameter permutation model $\mathbb{P}_{\beta_1,\beta_2}$ is given below (similar to the Metropolis algorithms for the $L^1$ and $L^2$ models introduced in Section \ref{Sect.3.4}). Each step of the algorithm proceeds as follows. Suppose we start at $\sigma$. Independently pick two numbers $i,j\in [n]$ uniformly at random; if $i=j$, stay at $\sigma$; if $i\neq j$, move to $\sigma (i,j)$ with probability 
\begin{eqnarray*}
    &\min\Big\{1,\exp\Big(-\beta_1(|\sigma(i)-j|+|\sigma(j)-i|-|\sigma(i)-i|-|\sigma(j)-j|)\\
    &\quad\quad\quad\quad\quad  
       +\beta_2(C(\sigma(i,j))-C(\sigma))\Big)\Big\},
\end{eqnarray*}
otherwise stay at $\sigma$. It can be checked that the stationary distribution of the algorithm is $\mathbb{P}_{\beta_1,\beta_2}$.

We keep track of the following three statistics associated with a permutation $\sigma\in S_n$: 
\begin{itemize}
    \item The number of fixed points: $T_1(\sigma)=\#\{i\in [n]: \sigma(i)=i\}$;
    \item The length of the cycle that contains $\lceil \frac{n}{2} \rceil$, denoted by $T_2(\sigma)$;
    \item The length of the longest increasing subsequence:
    \begin{equation*}
        T_3(\sigma)=\max\{k\leq n: \sigma(i_1)<\cdots<\sigma(i_k)\text{ for some }1\leq i_1<\cdots<i_k\leq n\}.
    \end{equation*}
\end{itemize}

Now we describe the simulation set-up. Throughout this section we take $n=1000$. For the $L^1$ model, we take $\beta=\frac{1}{n}$; for the $L^2$ model, we take $\beta=\frac{1}{n^2}$; for the two-parameter model, we take $\beta_1=\frac{1}{n}$ and $\beta_2=1$. We note that these choices of $\beta,\beta_1,\beta_2$ are beyond the theoretical analysis in Sections \ref{Sect.3}-\ref{Sect.4}. The simulation results presented in Sections \ref{Sect.6.1}-\ref{Sect.6.2} show that the hit and run algorithm exhibits more desirable performance than the Metropolis algorithm for such choices of parameter values.

To compare the empirical performance of the hit and run algorithm and  the Metropolis algorithm, we list in Table \ref{Table} below the average running times of both algorithms for the three models. 

\begin{table}[!h]
\centering
\begin{tabular}{ |c|c|c|c| }
\hline
 Algorithm & $L^1$ model & $L^2$ model & Two-parameter model \\
 \hline
 Hit and run & $18.1$ & $19.2$ & $76.5$ \\ 
 \hline
 Metropolis & $5.41\cdot 10^{-2}$ & $6.3\cdot 10^{-2}$ & $8.81\cdot 10^{-2}$ \\  
 \hline
\end{tabular}
\caption{Average running time (unit: ms--millisecond) for one step of the hit and run algorithm\slash Metropolis algorithm for the three models}
\label{Table}
\end{table}


\subsection{Trace plots and histograms}\label{Sect.6.1}


In this subsection, we examine the convergence rates of the hit and run algorithm and the Metropolis algorithm through the trace plots and the histograms of the three statistics $\{T_i\}_{i=1}^3$.

First we show some trace plots of $\{T_i\}_{i=1}^3$ for the hit and run algorithm and the Metropolis algorithm started from the identity permutation. The trace plots of the hit and run algorithm are shown in Figure \ref{fig1}, and the trace plots of the Metropolis algorithm are shown in Figure \ref{fig2}. To examine the convergence rates more closely, we also show some histograms of the three statistics $\{T_i\}_{i=1}^3$ at certain steps (started from the identity permutation) for both algorithms in Figures \ref{fig3}-\ref{fig4}, respectively. The histograms for either of the algorithms at step $l$ are obtained by replicating the corresponding chain independently for $4000$ times and collecting the sample at the $l$th step for each replication.

We examine the convergence rates of both algorithms for the $L^1$ model from Subfigures (A1), (A2), (A3), (a1), (a2), and (a3) of Figures \ref{fig1}, \ref{fig2}, \ref{fig3}, and \ref{fig4}. From Subfigures (A1) and (a1) of the four figures, we observe that after running the hit and run algorithm for $2$ steps--which takes $36.2$ milliseconds (ms)--the distribution of $T_1$ is close to stationarity; in comparison, after running the Metropolis algorithm for $4000$ steps--which takes $216.4$ ms--the distribution of $T_1$ is still quite different from stationarity. From Subfigures (A2) and (a2) of the four figures, we learn that after running the hit and run algorithm for $1$ step--which takes $18.1$ ms--the distribution of $T_2$ is close to stationarity, which after running the Metropolis algorithm for $1000$ steps--which takes $54.1$ ms--the distribution of $T_2$ is still far from stationarity. Subfigures (A3) and (a3) of the four figures show that the distribution of $T_3$ is close to stationarity after running the hit and run algorithm for $3$ steps (which takes $54.3$ ms), but is quite different from stationarity after running the Metropolis algorithm for $3000$ steps (which takes $162.3$ ms). In conclusion, in terms of running times, the hit and run algorithm mixes much faster than the Metropolis algorithm for the $L^1$ model. Similar conclusions can be made for the $L^2$ model and the two-parameter permutation model from Figures \ref{fig1}, \ref{fig2}, \ref{fig3}, and \ref{fig4}. 

In Figure \ref{fig4.1}, we also compare the histograms of $\{T_i\}_{i=1}^3$ for the hit and run algorithm (after running the chain for $100$ steps started from the identity permutation for all the three models) and the Metropolis algorithm (after running the chain for $10000$ steps--for the $L^1$ and $L^2$ models--or $30000$ steps--for the two-parameter permutation model--started from the identity permutation); as can be seen from the figure, the histograms for the two algorithms match closely.

\begin{figure}[H]
  \centering
  \includegraphics[
  height=1.4\textwidth,
  width=\textwidth
  ]{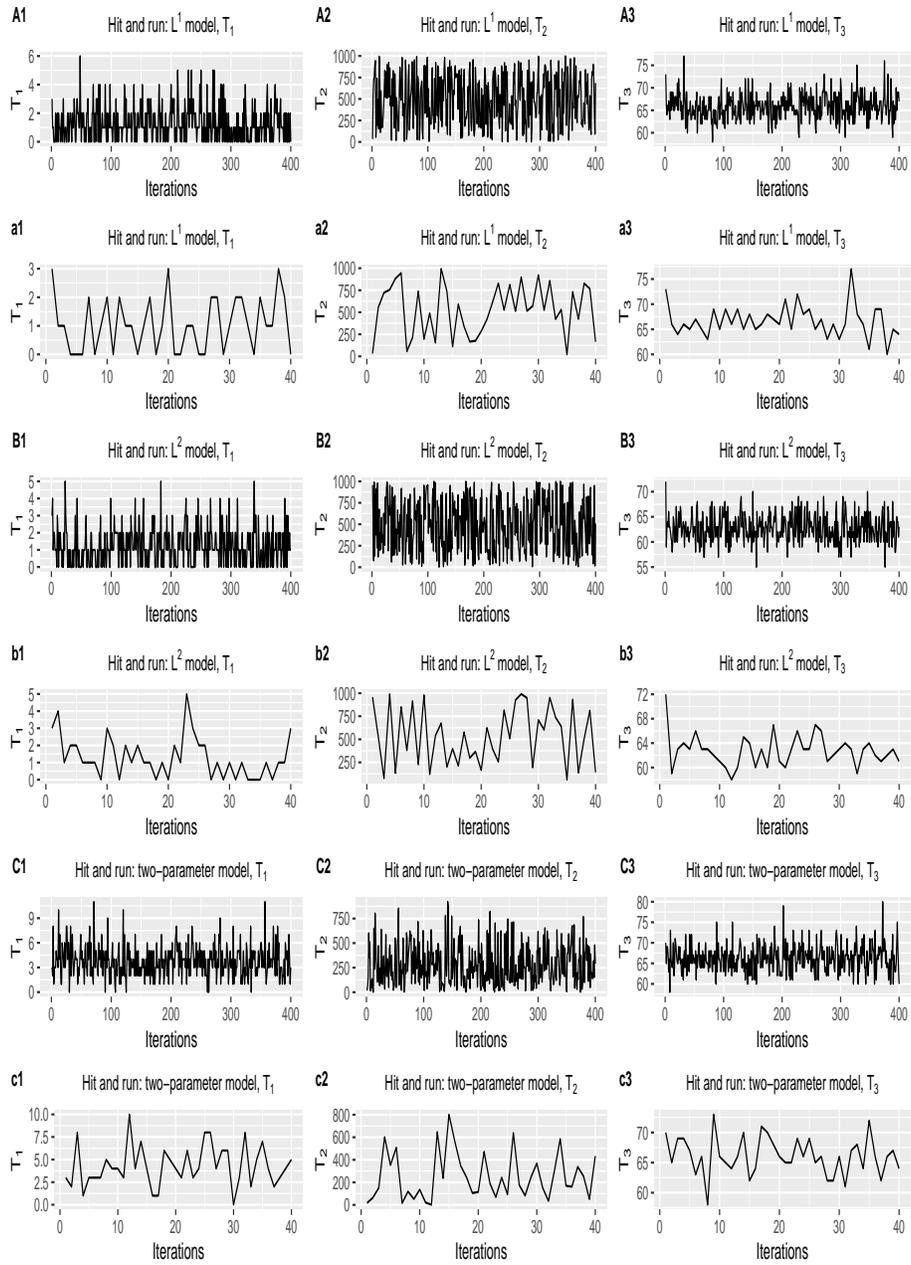}
 \caption{Trace plots of $\{T_i\}_{i=1}^3$ for the hit and run algorithm}
 \label{fig1}
\end{figure} 

\begin{figure}[H]
  \centering
  \includegraphics[
  height=1.4\textwidth,
  width=\textwidth
  ]{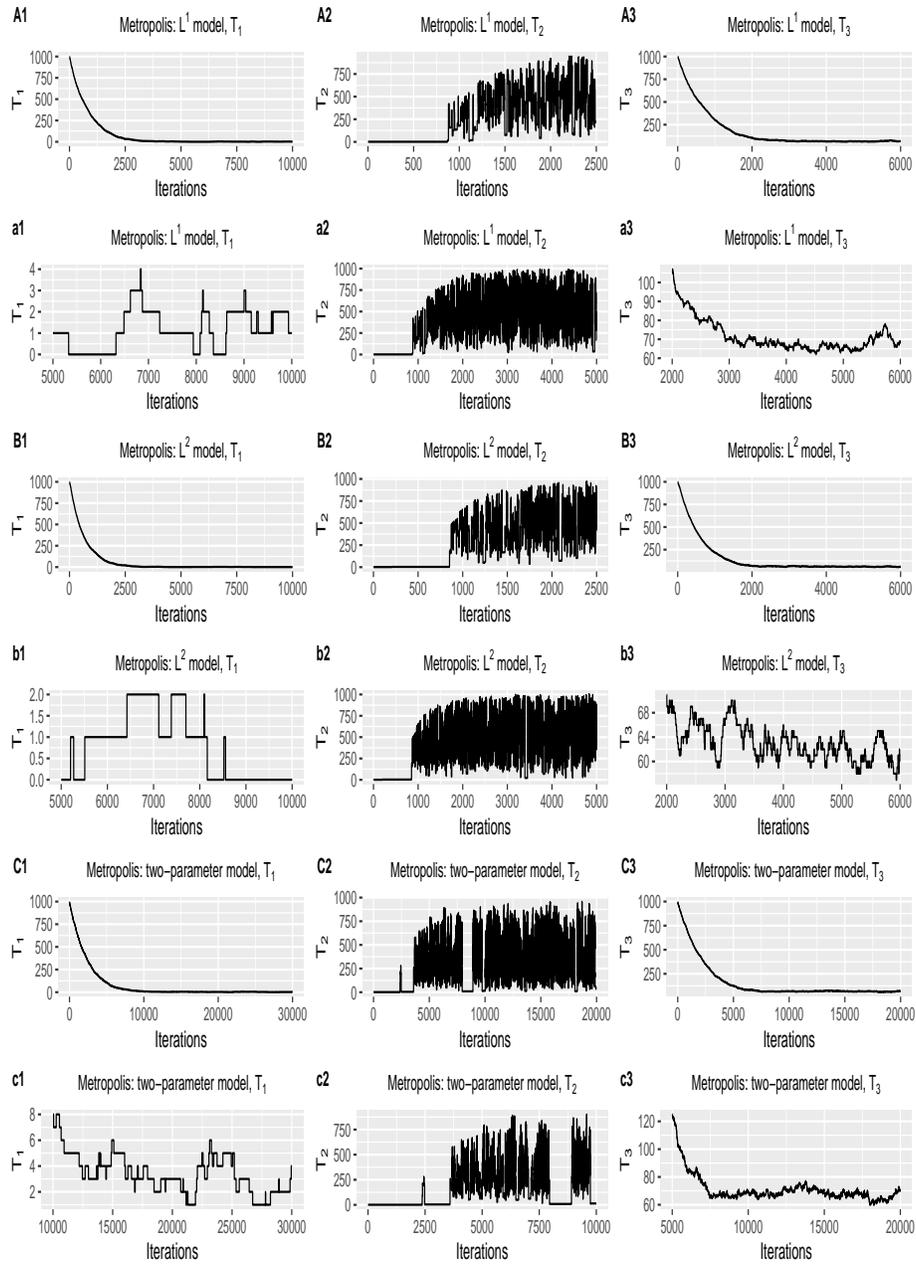}
 \caption{Trace plots of $\{T_i\}_{i=1}^3$ for the Metropolis algorithm}
 \label{fig2}
\end{figure} 

\begin{figure}[H]
  \centering
  \includegraphics[
  height=1.4\textwidth,
  width=\textwidth
  ]{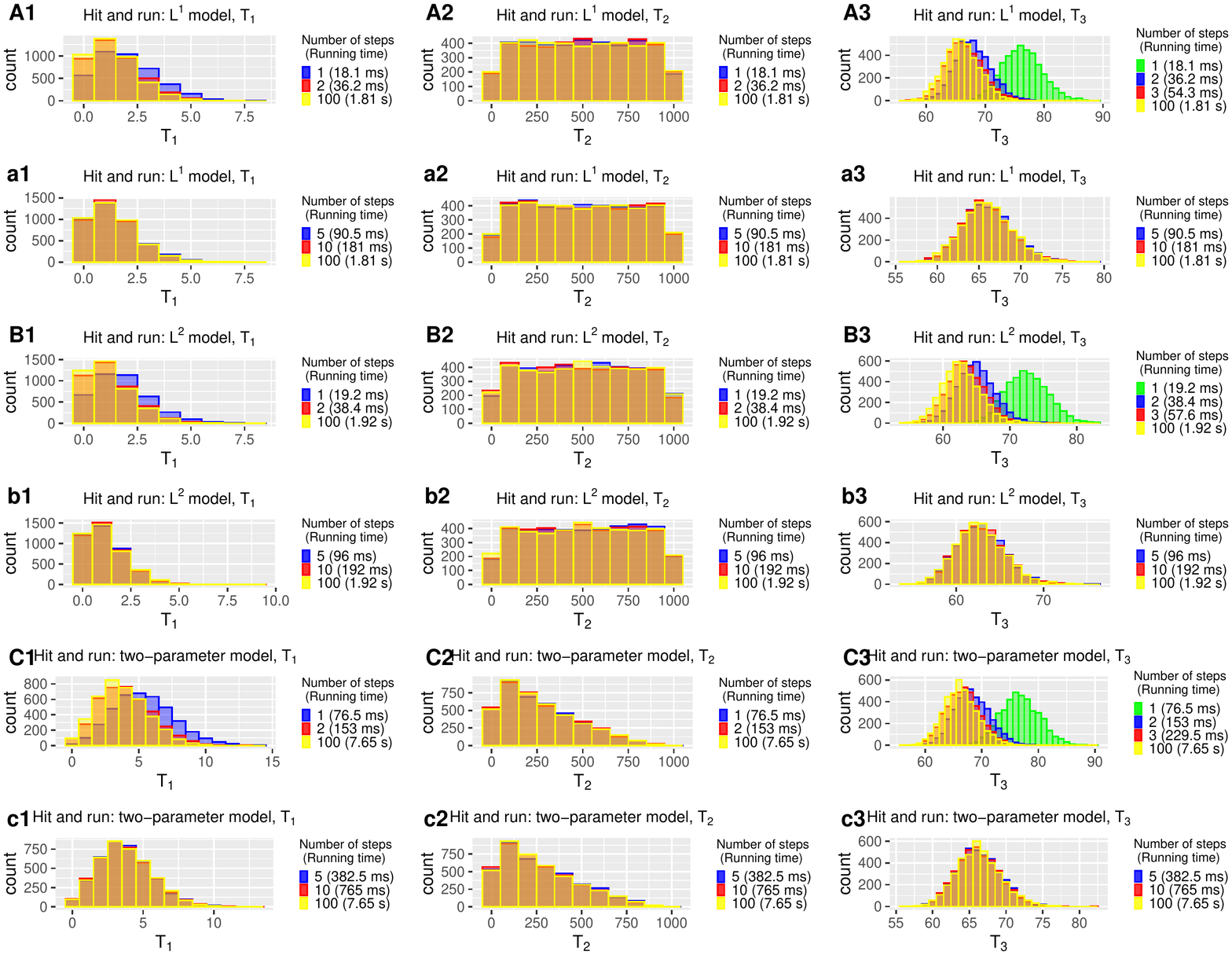}
 \caption{Histograms of $\{T_i\}_{i=1}^3$ for the hit and run algorithm: running time is computed as the product of the number of steps and the corresponding average running time as listed in Table \ref{Table}}
 \label{fig3}
\end{figure} 

\begin{figure}[H]
  \centering
  \includegraphics[
  height=1.4\textwidth,
  width=\textwidth
  ]{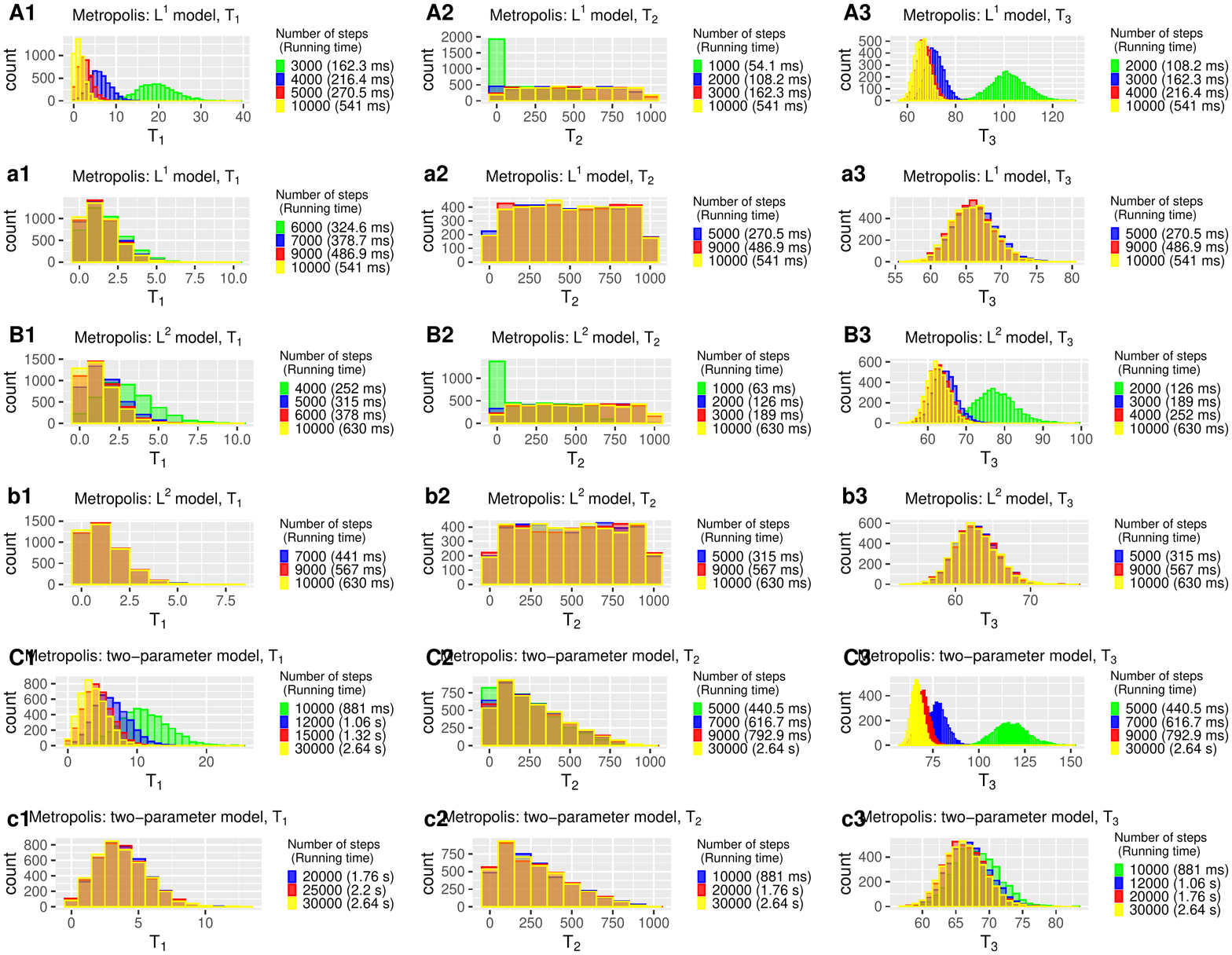}
 \caption{Histograms of $\{T_i\}_{i=1}^3$ for the Metropolis algorithm: running time is computed as the product of the number of steps and the corresponding average running time as listed in Table \ref{Table}}
 \label{fig4}
\end{figure} 

\begin{figure}[H]
  \centering
  \includegraphics[
  height=\textwidth,
  width=\textwidth
  ]{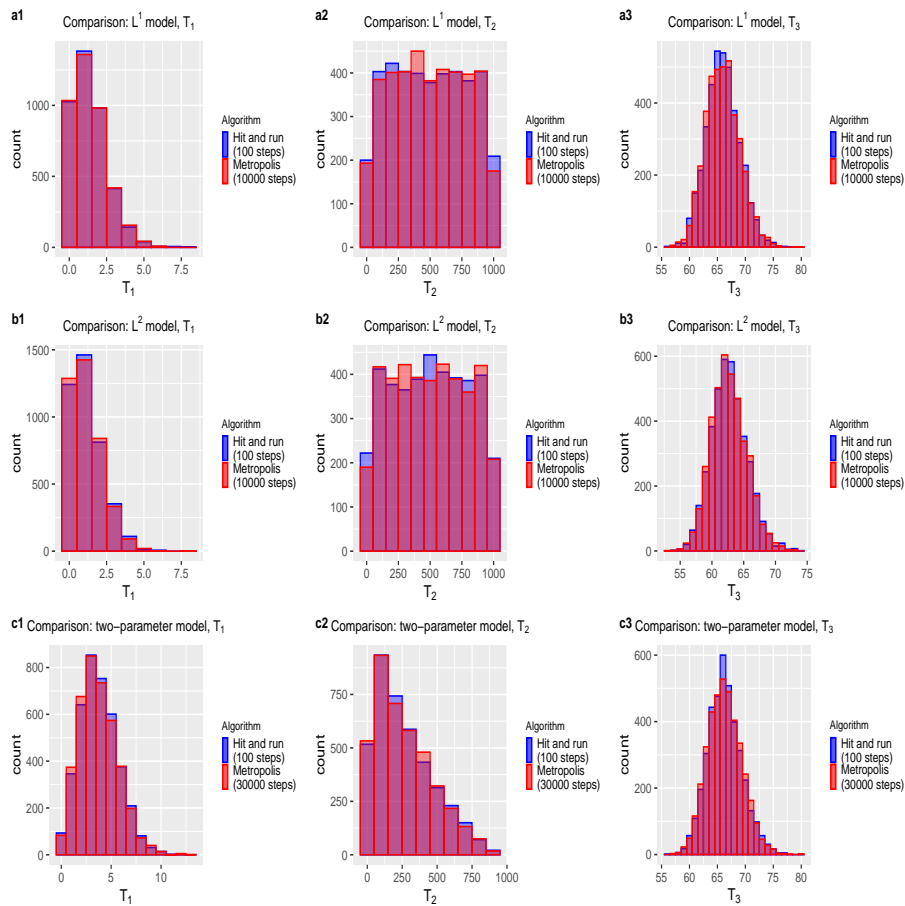}
 \caption{Comparison of the histograms of $\{T_i\}_{i=1}^3$ for the hit and run algorithm and the Metropolis algorithm}
 \label{fig4.1}
\end{figure}

\subsection{Autocorrelations}\label{Sect.6.2}

The trace plots in Section \ref{Sect.6.1} indicate that there is a stronger serial correlation of the Metropolis algorithm than that of the hit and run algorithm. To investigate this aspect more closely, we plot the autocorrelations of $\{T_i\}_{i=1}^3$ for both algorithms in Figure \ref{fig5}. For all the three models, the autocorrelation plot for the hit and run algorithm is based on $1000$ samples started from the identity permutation. The autocorrelation plot for the Metropolis algorithm is based on $10000$ samples (for the $L^1$ and $L^2$ models) or $30000$ samples (for the two-parameter permutation model) started from the identity permutation. 

From the autocorrelation plots, we observe that for all the three statistics $\{T_i\}_{i=1}^3$, the autocorrelation for the Metropolis algorithm is much larger than that of the hit and run algorithm. Moreover, for the Metropolis algorithm, the autocorrelation of $T_2$ is considerably smaller than that of $T_1$ and $T_3$. The former observation suggests that if we collect the same number of samples from both algorithms, the effective sample size for the hit and run algorithm can be much larger than that for the Metropolis algorithm. Hence in terms of effective sample size, the hit and run algorithm is also much more efficient than the Metropolis algorithm.

\begin{figure}[H]
  \centering
  \includegraphics[
  height=1.4\textwidth,
  width=\textwidth
  ]{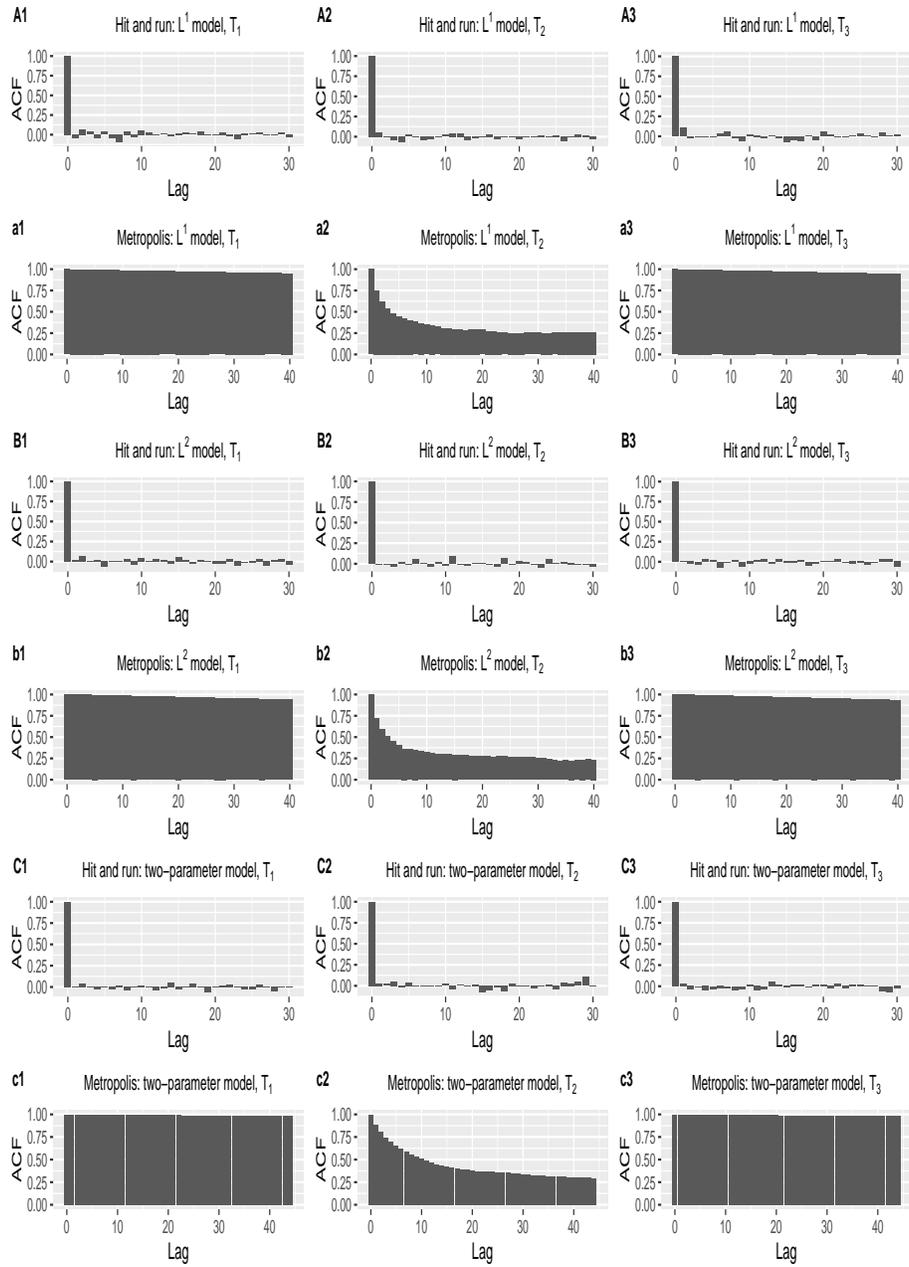}
 \caption{Autocorrelation plots of $\{T_i\}_{i=1}^3$ for the hit and run algorithm and the Metropolis algorithm}
 \label{fig5}
\end{figure} 

\newpage

\bibliographystyle{acm}
\bibliography{Mallows.bib}

\begin{thebibliography}{10}

\bibitem{AC}
{\sc Addario-Berry, L., and Corsini, B.}
\newblock The height of {M}allows trees.
\newblock {\em Ann. Probab. 49}, 5 (2021), 2220--2271.

\bibitem{Aldous}
{\sc Aldous, D.~J.}
\newblock Exchangeability and related topics.
\newblock In {\em \'{E}cole d'\'{e}t\'{e} de probabilit\'{e}s de
  {S}aint-{F}lour, {XIII}---1983}, vol.~1117 of {\em Lecture Notes in Math.}
  Springer, Berlin, 1985, pp.~1--198.

\bibitem{AD}
{\sc Andersen, H.~C., and Diaconis, P.}
\newblock Hit and run as a unifying device.
\newblock {\em J. Soc. Fr. Stat. \& Rev. Stat. Appl. 148}, 4 (2007), 5--28.

\bibitem{ABT}
{\sc Arratia, R., Barbour, A.~D., and Tavar\'{e}, S.}
\newblock {\em Logarithmic combinatorial structures: a probabilistic approach}.
\newblock EMS Monographs in Mathematics. European Mathematical Society (EMS),
  Z\"{u}rich, 2003.

\bibitem{ABSV}
{\sc Awasthi, P., Blum, A., Sheffet, O., and Vijayaraghavan, A.}
\newblock Learning mixtures of ranking models.
\newblock {\em arXiv preprint arXiv:1410.8750\/} (2014).

\bibitem{BB}
{\sc Basu, R., and Bhatnagar, N.}
\newblock Limit theorems for longest monotone subsequences in random {M}allows
  permutations.
\newblock {\em Ann. Inst. Henri Poincar\'{e} Probab. Stat. 53}, 4 (2017),
  1934--1951.

\bibitem{Bet}
{\sc Betz, V.}
\newblock Random permutations of a regular lattice.
\newblock {\em J. Stat. Phys. 155}, 6 (2014), 1222--1248.

\bibitem{BP}
{\sc Bhatnagar, N., and Peled, R.}
\newblock Lengths of monotone subsequences in a {M}allows permutation.
\newblock {\em Probab. Theory Related Fields 161}, 3-4 (2015), 719--780.

\bibitem{BR}
{\sc Biskup, M., and Richthammer, T.}
\newblock Gibbs measures on permutations over one-dimensional discrete point
  sets.
\newblock {\em Ann. Appl. Probab. 25}, 2 (2015), 898--929.

\bibitem{Blum}
{\sc Blumberg, O.}
\newblock Cutoff for the transposition walk on permutations with one-sided
  restrictions.
\newblock {\em arXiv preprint arXiv:1202.4797\/} (2012).

\bibitem{BRS}
{\sc Boardman, S., Rudolf, D., and Saloff-Coste, L.}
\newblock The hit-and-run version of top-to-random.
\newblock {\em arXiv preprint arXiv:2009.04977\/} (2020).

\bibitem{BDJ2}
{\sc Borodin, A., Diaconis, P., and Fulman, J.}
\newblock On adding a list of numbers (and other one-dependent determinantal
  processes).
\newblock {\em Bull. Amer. Math. Soc. (N.S.) 47}, 4 (2010), 639--670.

\bibitem{BD}
{\sc Bubley, R., and Dyer, M.}
\newblock Path coupling: A technique for proving rapid mixing in {Markov}
  chains.
\newblock In {\em Proceedings 38th Annual Symposium on Foundations of Computer
  Science\/} (1997), IEEE, pp.~223--231.

\bibitem{CBBK}
{\sc Chen, H., Branavan, S., Barzilay, R., and Karger, D.~R.}
\newblock Content modeling using latent permutations.
\newblock {\em Journal of Artificial Intelligence Research 36\/} (2009),
  129--163.

\bibitem{Crane}
{\sc Crane, H.}
\newblock The ubiquitous {E}wens sampling formula.
\newblock {\em Statist. Sci. 31}, 1 (2016), 1--19.

\bibitem{CDE}
{\sc Crane, H., DeSalvo, S., and Elizalde, S.}
\newblock The probability of avoiding consecutive patterns in the {M}allows
  distribution.
\newblock {\em Random Structures Algorithms 53}, 3 (2018), 417--447.

\bibitem{Cri}
{\sc Critchlow, D.~E.}
\newblock {\em Metric methods for analyzing partially ranked data}, vol.~34 of
  {\em Lecture Notes in Statistics}.
\newblock Springer-Verlag, Berlin, 1985.

\bibitem{CFV}
{\sc Critchlow, D.~E., Fligner, M.~A., and Verducci, J.~S.}
\newblock Probability models on rankings.
\newblock {\em J. Math. Psych. 35}, 3 (1991), 294--318.

\bibitem{D}
{\sc Diaconis, P.}
\newblock {\em Group representations in probability and statistics}, vol.~11 of
  {\em Institute of Mathematical Statistics Lecture Notes---Monograph Series}.
\newblock Institute of Mathematical Statistics, Hayward, CA, 1988.

\bibitem{Dia}
{\sc Diaconis, P.}
\newblock Analysis of a {B}ose-{E}instein {M}arkov chain.
\newblock {\em Ann. Inst. H. Poincar\'{e} Probab. Statist. 41}, 3 (2005),
  409--418.

\bibitem{DGH}
{\sc Diaconis, P., Graham, R., and Holmes, S.~P.}
\newblock Statistical problems involving permutations with restricted
  positions.
\newblock In {\em State of the art in probability and statistics ({L}eiden,
  1999)}, vol.~36 of {\em IMS Lecture Notes Monogr. Ser.} Inst. Math. Statist.,
  Beachwood, OH, 2001, pp.~195--222.

\bibitem{DH}
{\sc Diaconis, P., and Hanlon, P.}
\newblock Eigen-analysis for some examples of the {M}etropolis algorithm.
\newblock In {\em Hypergeometric functions on domains of positivity, {J}ack
  polynomials, and applications ({T}ampa, {FL}, 1991)}, vol.~138 of {\em
  Contemp. Math.} Amer. Math. Soc., Providence, RI, 1992, pp.~99--117.

\bibitem{DR}
{\sc Diaconis, P., and Ram, A.}
\newblock Analysis of systematic scan {M}etropolis algorithms using
  {I}wahori-{H}ecke algebra techniques.
\newblock vol.~48. 2000, pp.~157--190.
\newblock Dedicated to William Fulton on the occasion of his 60th birthday.

\bibitem{DR3}
{\sc Diaconis, P., and Ram, A.}
\newblock A probabilistic interpretation of the {M}acdonald polynomials.
\newblock {\em Ann. Probab. 40}, 5 (2012), 1861--1896.

\bibitem{DS}
{\sc Diaconis, P., and Simper, M.}
\newblock Statistical enumeration of groups by double cosets.
\newblock {\em Journal of Algebra\/} (2021).

\bibitem{DZ}
{\sc Diaconis, P., and Zhong, C.}
\newblock Hahn polynomials and the {Burnside} process.
\newblock {\em The Ramanujan Journal\/} (2021), 1--29.

\bibitem{FC}
{\sc Feigin, P.~D., and Cohen, A.}
\newblock On a model for concordance between judges.
\newblock {\em Journal of the Royal Statistical Society: Series B
  (Methodological) 40}, 2 (1978), 203--213.

\bibitem{Feray}
{\sc F\'{e}ray, V.}
\newblock Asymptotic behavior of some statistics in {E}wens random
  permutations.
\newblock {\em Electron. J. Probab. 18\/} (2013), no. 76, 32.

\bibitem{Fey}
{\sc Feynman, R.~P.}
\newblock Atomic theory of the $\lambda$ transition in helium.
\newblock {\em Physical Review 91}, 6 (1953), 1291.

\bibitem{Fic}
{\sc Fichtner, K.-H.}
\newblock Random permutations of countable sets.
\newblock {\em Probab. Theory Related Fields 89}, 1 (1991), 35--60.

\bibitem{FV}
{\sc Fligner, M.~A., and Verducci, J.~S.}
\newblock Distance based ranking models.
\newblock {\em J. Roy. Statist. Soc. Ser. B 48}, 3 (1986), 359--369.

\bibitem{FV2}
{\sc Fligner, M.~A., and Verducci, J.~S.}
\newblock Multistage ranking models.
\newblock {\em J. Amer. Statist. Assoc. 83}, 403 (1988), 892--901.

\bibitem{FM}
{\sc Fyodorov, Y.~V., and Muirhead, S.}
\newblock The band structure of a model of spatial random permutation.
\newblock {\em Probab. Theory Related Fields 179}, 3-4 (2021), 543--587.

\bibitem{GRU}
{\sc Gandolfo, D., Ruiz, J., and Ueltschi, D.}
\newblock On a model of random cycles.
\newblock {\em J. Stat. Phys. 129}, 4 (2007), 663--676.

\bibitem{GP}
{\sc Gladkich, A., and Peled, R.}
\newblock On the cycle structure of {M}allows permutations.
\newblock {\em Ann. Probab. 46}, 2 (2018), 1114--1169.

\bibitem{GO}
{\sc Gnedin, A., and Olshanski, G.}
\newblock The two-sided infinite extension of the {M}allows model for random
  permutations.
\newblock {\em Adv. in Appl. Math. 48}, 5 (2012), 615--639.

\bibitem{GLU}
{\sc Grosskinsky, S., Lovisolo, A.~A., and Ueltschi, D.}
\newblock Lattice permutations and {P}oisson-{D}irichlet distribution of cycle
  lengths.
\newblock {\em J. Stat. Phys. 146}, 6 (2012), 1105--1121.

\bibitem{He}
{\sc He, J.}
\newblock A central limit theorem for descents of a {Mallows} permutation and
  its inverse.
\newblock {\em arXiv preprint arXiv:2005.09802\/} (2020).

\bibitem{HHL}
{\sc Holroyd, A.~E., Hutchcroft, T., and Levy, A.}
\newblock Mallows permutations and finite dependence.
\newblock {\em Ann. Probab. 48}, 1 (2020), 343--379.

\bibitem{Ka3}
{\sc Kammoun, M.~S.}
\newblock Monotonous subsequences and the descent process of invariant random
  permutations.
\newblock {\em Electron. J. Probab. 23\/} (2018), Paper no. 118, 31.

\bibitem{Ka1}
{\sc Kammoun, M.~S.}
\newblock On the longest common subsequence of conjugation invariant random
  permutations.
\newblock {\em Electron. J. Combin. 27}, 4 (2020), Paper No. 4.10, 21.

\bibitem{Ka2}
{\sc Kammoun, M.~S.}
\newblock Universality for random permutations and some other groups.
\newblock {\em arXiv preprint arXiv:2012.05845\/} (2020).

\bibitem{LL}
{\sc Lebanon, G., and Lafferty, J.}
\newblock Cranking: {Combining} rankings using conditional probability models
  on permutations.
\newblock In {\em ICML\/} (2002), vol.~2, pp.~363--370.

\bibitem{LM2}
{\sc Lebanon, G., and Mao, Y.}
\newblock Non-parametric modeling of partially ranked data.
\newblock {\em Journal of Machine Learning Research 9}, 10 (2008).

\bibitem{LP}
{\sc Levin, D.~A., and Peres, Y.}
\newblock {\em Markov chains and mixing times}.
\newblock American Mathematical Society, Providence, RI, 2017.
\newblock Second edition of [ MR2466937], With contributions by Elizabeth L.
  Wilmer, With a chapter on ``Coupling from the past'' by James G. Propp and
  David B. Wilson.

\bibitem{Lovasz1}
{\sc Lov\'{a}sz, L.}
\newblock Hit-and-run mixes fast.
\newblock {\em Math. Program. 86}, 3, Ser. A (1999), 443--461.

\bibitem{LV1}
{\sc Lov{\'a}sz, L., and Vempala, S.}
\newblock Hit-and-run is fast and fun.
\newblock {\em preprint, Microsoft Research\/} (2003).

\bibitem{LV2}
{\sc Lov\'{a}sz, L., and Vempala, S.}
\newblock Hit-and-run from a corner.
\newblock In {\em Proceedings of the 36th {A}nnual {ACM} {S}ymposium on
  {T}heory of {C}omputing\/} (2004), ACM, New York, pp.~310--314.

\bibitem{Mal}
{\sc Mallows, C.~L.}
\newblock Non-null ranking models. {I}.
\newblock {\em Biometrika 44\/} (1957), 114--130.

\bibitem{Mar}
{\sc Marden, J.~I.}
\newblock {\em Analyzing and modeling rank data}, vol.~64 of {\em Monographs on
  Statistics and Applied Probability}.
\newblock Chapman \& Hall, London, 1995.

\bibitem{MPPB}
{\sc Meila, M., Phadnis, K., Patterson, A., and Bilmes, J.~A.}
\newblock Consensus ranking under the exponential model.
\newblock {\em arXiv preprint arXiv:1206.5265\/} (2012).

\bibitem{MS}
{\sc Mueller, C., and Starr, S.}
\newblock The length of the longest increasing subsequence of a random
  {M}allows permutation.
\newblock {\em J. Theoret. Probab. 26}, 2 (2013), 514--540.

\bibitem{M1}
{\sc Mukherjee, S.}
\newblock Estimation in exponential families on permutations.
\newblock {\em Ann. Statist. 44}, 2 (2016), 853--875.

\bibitem{M2}
{\sc Mukherjee, S.}
\newblock Fixed points and cycle structure of random permutations.
\newblock {\em Electron. J. Probab. 21\/} (2016), Paper No. 40, 18.

\bibitem{Pins}
{\sc Pinsky, R.~G.}
\newblock Permutations avoiding a pattern of length three under {M}allows
  distributions.
\newblock {\em Random Structures Algorithms 58}, 4 (2021), 676--690.

\bibitem{PT}
{\sc Pitman, J., and Tang, W.}
\newblock Regenerative random permutations of integers.
\newblock {\em Ann. Probab. 47}, 3 (2019), 1378--1416.

\bibitem{Sta}
{\sc Stanley, R.~P.}
\newblock {\em Enumerative combinatorics. {V}olume 1}, second~ed., vol.~49 of
  {\em Cambridge Studies in Advanced Mathematics}.
\newblock Cambridge University Press, Cambridge, 2012.

\bibitem{Starr}
{\sc Starr, S.}
\newblock Thermodynamic limit for the {M}allows model on {$S_n$}.
\newblock {\em J. Math. Phys. 50}, 9 (2009), 095208, 15.

\bibitem{PS}
{\sc Switzer, P.}
\newblock Personal communication.

\bibitem{Toth}
{\sc T\'{o}th, B.}
\newblock Improved lower bound on the thermodynamic pressure of the spin
  {$1/2$} {H}eisenberg ferromagnet.
\newblock {\em Lett. Math. Phys. 28}, 1 (1993), 75--84.

\bibitem{Turcin}
{\sc Tur\v{c}in, V.~F.}
\newblock On the computation of multidimensional integrals according to the
  {M}onte {C}arlo method.
\newblock {\em Teor. Verojatnost. i Primenen. 16\/} (1971), 738--743.

\bibitem{Zho2}
{\sc Zhong, C.}
\newblock Mallows permutation models with {$L^1$} and {$L^2$} distances
  \rom{2}: band structure and limiting profile, forthcoming.

\bibitem{Zho3}
{\sc Zhong, C.}
\newblock Mallows permutation models with {$L^1$} and {$L^2$} distances
  \rom{3}: longest increasing subsequence, forthcoming.

\bibitem{Zho4}
{\sc Zhong, C.}
\newblock Mallows permutation models with {$L^1$} and {$L^2$} distances
  \rom{4}: fixed points and cycle structure, forthcoming.

\end{thebibliography}
\end{document}